\documentclass[a4paper,12pt]{article}
\usepackage[top=2.5cm,bottom=2.5cm,left=2.5cm,right=2.5cm]{geometry}
\usepackage{cite, amsmath, amssymb}
\usepackage{amssymb}
\usepackage{graphicx}
\newenvironment{proof}{{\noindent \it Proof.}}{\hfill $\blacksquare$\par}
\usepackage{latexsym}
\usepackage{graphicx,booktabs,multirow}
\usepackage{tikz}
\usetikzlibrary{decorations.pathreplacing}
\usetikzlibrary{intersections}
\usepackage{enumerate}
\usepackage{graphicx,booktabs,multirow}
\usepackage{appendix}
\newtheorem{theorem}{Theorem}[section]

\newtheorem{lemma}[theorem]{Lemma}
\newtheorem{corollary}[theorem]{\rm\bfseries Corollary}

\newtheorem{conjecture}{Conjecture}[section]

\usepackage[section]{placeins}
\allowdisplaybreaks

\usepackage{rotating}

\begin{document}

\vspace*{10mm}

\noindent
{\Large \bf Mathematical and chemistry properties of geometry-based invariants}

\vspace{7mm}

\noindent
{\large \bf Hechao Liu}

\vspace{7mm}

\noindent
School of Mathematical Sciences, South China Normal University, Guangzhou, 510631, P. R. China, e-mail: {\tt hechaoliu@m.scnu.edu.cn}

\vspace{7mm}

\noindent
Received 16 August 2022

\vspace{10mm}

\noindent
{\bf Abstract} \
Recently, based on elementary geometry, Gutman proposed several geometry-based invariants (i.e., $SO$, $SO_{1}$, $SO_{2}$, $SO_{3}$, $SO_{4}$, $SO_{5}$, $SO_{6}$).
The Sombor index was defined as $SO(G)=\sum\limits_{uv\in E(G)}\sqrt{d_{u}^{2}+d_{v}^{2}}$,
the first Sombor index was defined as $SO_{1}(G)= \frac{1}{2}\sum\limits_{uv\in E(G)}|d_{u}^{2}-d_{v}^{2}|$, where $d_{u}$ denotes the degree of vertex $u$.

In this paper, we consider the mathematical and chemistry properties of these geometry-based invariants. We determine the maximum trees (resp. unicyclic graphs) with given diameter, the maximum trees with given matching number, the maximum trees with given pendent vertices, the maximum trees (resp. minimum trees) with given branching number, the minimum trees with given maximum degree and second maximum degree, the minimum unicyclic graphs with given maximum degree and girth, the minimum connected graphs with given maximum degree and pendent vertices, and some properties of maximum connected graphs with given pendent vertices with respect to the first Sombor index $SO_{1}$.

As an application, we inaugurate these geometry-based invariants and verify their chemical applicability. We used these geometry-based invariants to model the acentric factor (resp. entropy, enthalpy of vaporization, etc.) of alkanes, and obtained satisfactory predictive potential, which indicates that these geometry-based invariants can be successfully used to model the thermodynamic properties of compounds.
\vspace{5mm}

\noindent
{\bf Keywords} \ geometry-based invariants, tree, connected graph, extremal value, regression model.

\noindent
\textbf{Mathematics Subject Classification:} 05C07, 05C09, 05C92

\baselineskip=0.30in

\section{Introduction}
\hskip 0.6cm
Let $G$ be a graph with vertex set $V(G)$ and edge set $E(G)$.
Let $d_{G}(u)$ (or $d_{u}$) be the degree of vertex $u$ in $G$, $N_{G}(u)=\{v|uv\in E(G)\}$ be the neighbor of vertex $u$ in $G$.
Let $\Delta(G)$ (resp. $\Delta_{2}(G)$) the maximum (resp. second maximum degree) in graph $G$.
Let $k$-vertex be the vertex with degree $k$.
Denote by $d_{G}(u,v)$ the length of a shortest path
between $u$ and $v$, then $diam(G)=\max\limits_{\{u,v\}\subseteq V(G)}\{d_{G}(u,v)\}$ is the diameter of $G$.
Denote by $n_{i}$, $m_{i,j}$ the number of vertices with degree $i$, the number of edges with degree $i$ and $j$ in $G$.
Let $PV(G)$ be the set of pendent vertices of $G$.
Denote by $C_{n}$, $S_{n}$, $P_{n}$, the cycle, star graph, path with order $n$, respectively.
The girth is the length of shortest cycle of graphs.
Let $uv\in E(G)$, then $G-uv$ denotes the graphs obtained from $G$ by deleting the edge $uv$, respectively.
In this paper, all notations and terminologies used but not defined can refer to Bondy and Murty \cite{bond2008}.

The geometry-based invariants were proposed by Gutman \cite{gumn2021,guin2022}, which were defined as follow:
\begin{eqnarray*}
SO(G) & = & \sum\limits_{uv\in E(G)}\sqrt{d_{u}^{2}+d_{v}^{2}}.\\[3mm]
SO_{1}(G) & = & \frac{1}{2}\sum\limits_{uv\in E(G)}|d_{u}^{2}-d_{v}^{2}|.\\[3mm]
SO_{2}(G) & = & \sum\limits_{uv\in E(G)}|\frac{d_{u}^{2}-d_{v}^{2}}{d_{u}^{2}+d_{v}^{2}}|.\\[3mm]
SO_{3}(G) & = & \sum\limits_{uv\in E(G)}\sqrt{2}\frac{d_{u}^{2}+d_{v}^{2}}{d_{u}+d_{v}}\pi.\\[3mm]
SO_{4}(G) & = & \frac{1}{2}\sum\limits_{uv\in E(G)}(\frac{d_{u}^{2}+d_{v}^{2}}{d_{u}+d_{v}})^{2}\pi.\\[3mm]
SO_{5}(G) & = & \sum\limits_{uv\in E(G)}\frac{2|d_{u}^{2}-d_{v}^{2}|}{\sqrt{2}+2\sqrt{d_{u}^{2}+d_{v}^{2}}}\pi.\\[3mm]
SO_{6}(G) & = & \sum\limits_{uv\in E(G)}[\frac{|d_{u}^{2}-d_{v}^{2}|}{\sqrt{2}+2\sqrt{d_{u}^{2}+d_{v}^{2}}}]^{2}\pi.
\end{eqnarray*}

The so-called ``Sombor index'' was introduced in a very recent paper, to be published in 2021. Since then, a flood of papers were created, reporting properties of the mentioned new topological indices. This is reasonable to expect, since there is a real need to determine their main properties, same as what earlier has been done with other distance-based topological indices.
One can refer to \cite{rirm2021,chli2021,lizh2022,lhua2022,lyhu2021} for more details about Sombor index.

For convenience, we call $SO_{1}$ the first Sombor index, which also called the modified Albertson index by dividing by 2. Let $2SO_{1}(G)=\sum\limits_{xy\in E(G)}\phi(x,y)$, where $\phi(x,y)=|d_{G}^{2}(x)-d_{G}^{2}(y)|$.
In \cite{ghso2020}, Ghalavand et al. determined the first eight smallest $2SO_{1}$ in trees with $n$ vertices. In \cite{ybai2020}, Yousaf et al. determined the maximum and minimum $2SO_{1}$ in trees with $n$ vertices.

The purpose of the paper \cite{gumn2021}``Geometric approach to degree-based topological indices: Sombor indices'' and \cite{guin2022}``Sombor indices-back to geometry'' is to invite other colleagues not only to do the combinatorial and algebraic studies of these new Sombor-like indices, but to think their geometric origin.
Now, as expected, we did the combinatorial and algebraic studies of these new Sombor-like indices.

The remainder of this paper is organized as follow. In Section 2, we introduce some lemmas.
In Section 3, we determine the maximum trees (resp. unicyclic graphs) with given diameter with respect to $SO_{1}$.
In Section 4, we determine the maximum trees with given matching number with respect to $SO_{1}$.
In Section 5, we determine the maximum trees with given pendent vertices with respect to $SO_{1}$.
In Section 6, we determine the maximum trees (resp. minimum trees) with given branching number with respect to $SO_{1}$.
In Section 7, we determine the minimum trees with given maximum degree and second maximum degree, the minimum unicyclic graphs with given maximum degree and girth, the minimum connected graphs with given maximum degree and pendent vertices, and some properties of maximum connected graphs with given pendent vertices with respect to $SO_{1}$.
In Section 8, we inaugurate these geometry-based invariants and verify their chemical applicability.
In Section 9, we conclude this paper.

\section{Preliminaries}
\hskip 0.6cm
By the definition of $SO_{1}$, we have
\begin{lemma}\label{l1-1}{\rm \textbf{Edge-lifting transformation}}
Let $G$ be a connected graph and $uv\in E(G)$ is not a pendent edge. Let $G^{*}=G-\{uw|w\in N_{G}(u)\setminus \{v\}\}+\{vw|w\in N_{G}(u)\setminus \{v\}\}$. Then $SO_{1}(G)<SO_{1}(G^{*})$.
\end{lemma}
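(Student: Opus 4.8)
The plan is to work with the doubled index $2SO_1(G)=\sum_{xy\in E(G)}|d_x^2-d_y^2|$ and to track only those edges whose contribution changes under the transformation. Write $a=d_G(u)$ and $b=d_G(v)$; since $uv$ is not a pendent edge we have $a,b\ge 2$. The move leaves $u$ pendent and raises $v$ to degree $M:=a+b-1$, while every vertex in $N_G(u)\setminus\{v\}$ and in $N_G(v)\setminus\{u\}$ keeps its degree. Hence only three families of edges are affected: the edge $uv$ itself, the edges from $u$ to $N_G(u)\setminus\{v\}$ (which become edges from $v$), and the edges from $v$ to $N_G(v)\setminus\{u\}$. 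I would therefore write $\Delta:=2SO_1(G^*)-2SO_1(G)$ as a sum of these three pieces and aim to show $\Delta>0$.

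First I would dispose of the edge $uv$. Before the move its contribution is $|a^2-b^2|$ and afterwards it is $|1-M^2|=M^2-1$ (as $M\ge 3$); a short computation gives the clean positive gain $(M^2-1)-|a^2-b^2|=2(\min\{a,b\}-1)(a+b)$. Next, for a neighbour $w$ of $u$ of degree $c$ the affected term changes from $|a^2-c^2|$ to $|M^2-c^2|$, and a neighbour $z$ of $v$ of degree $e$ contributes $|M^2-e^2|-|b^2-e^2|$. In each case one endpoint degree rises (from $a$ to $M\ge a$, resp.\ from $b$ to $M\ge b$) while the other is fixed, so each such term is nonnegative whenever the fixed neighbour has small degree: if $c\le a$ the $w$-term equals $M^2-a^2\ge 0$, and if $e\le b$ the $z$-term equals $M^2-b^2\ge 0$. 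In particular, when all lifted neighbours are leaves every affected term is nonnegative and, together with the strictly positive $uv$-gain, the inequality $\Delta>0$ follows at once.

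The main obstacle is the case of \emph{high-degree} neighbours. When $c\ge M$ the $w$-term equals $a^2-M^2<0$, and when $e\ge M$ the $z$-term equals $b^2-M^2<0$, so the neighbour pieces are not individually sign-definite and a term-by-term argument cannot close the gap. The crux of any correct proof is therefore to aggregate, showing that the sum of the (possibly negative) neighbour contributions is outweighed by the $uv$-gain $2(\min\{a,b\}-1)(a+b)$. This is the step I expect to be genuinely delicate, and I would stress-test it on a small configuration before trusting it -- for instance a vertex $u$ of degree $a$ with several neighbours of degree close to $M$ -- because the crude lower bounds $a^2-M^2$ and $b^2-M^2$, summed over the $a-1$ and $b-1$ affected neighbours, can overwhelm the positive $uv$-gain. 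This suggests that the inequality as stated is safe precisely when the lifted neighbours have degree at most $\min\{a,b\}$ (e.g.\ when they are leaves, which is the situation arising in the extremal families treated later), and that the unrestricted statement either requires such a hypothesis or a more careful argument exploiting additional structure of $G$.
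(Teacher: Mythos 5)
Your decomposition into the three affected edge families is correct, as is the exact gain $(M^2-1)-|a^2-b^2|=2(\min\{a,b\}-1)(a+b)$ on the edge $uv$, and the worry in your final paragraph is not a gap in your reasoning but a genuine defect in the statement: the lemma is false as written, and the configuration you propose to stress-test is essentially a counterexample. Take the tree $G$ on $8$ vertices obtained from the path $wuvz$ by attaching two pendent vertices to each of $w$ and $z$, so that $d_w=d_z=3$, $d_u=d_v=2$, and $uv$ is not a pendent edge. Lifting along $uv$ (replacing $uw$ by $vw$) gives $d_u=1$ and $d_v=3$, and
$$2SO_1(G)=2\cdot 8+5+0+5+2\cdot 8=42,\qquad 2SO_1(G^{*})=2\cdot 8+0+8+0+2\cdot 8=40,$$
so $SO_1(G^{*})<SO_1(G)$. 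This is exactly the mechanism you isolated: the two affected neighbour terms each lose $a^2-M^2=4-9=-5$, which outweighs the gain of $8$ on the edge $uv$.

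There is nothing on the paper's side to compare you against: Lemma~\ref{l1-1} is introduced only with the phrase ``By the definition of $SO_1$, we have'' and no proof is supplied. Your diagnosis of when the inequality is safe --- the lifted neighbours of $u$ (and the neighbours of $v$) of small degree, e.g.\ leaves, or more generally situations where the uniformly positive contributions from the neighbours of $v$ dominate --- is the correct repair, and it does cover the instances where the lemma is actually invoked later (Theorem~\ref{t1-3}, Lemma~\ref{l6-5}). So you have not proved the lemma, but no one can; the right conclusion, which you reached, is that the statement requires an additional hypothesis, and you should record the counterexample above rather than keep trying to close the aggregation step.
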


\begin{lemma}\label{l1-2}{\rm \cite{ybai2020}}
Let $n\geq 5$ and $T$ be a tree with $n$ vertices and $T\ncong P_{n}$, $S_{n}$. Then $SO_{1}(P_{n})<SO_{1}(T)<SO_{1}(S_{n})$.
\end{lemma}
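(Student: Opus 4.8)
The plan is to treat the two inequalities separately, establishing $P_n$ as the unique minimizer and $S_n$ as the unique maximizer of $SO_1$ over all $n$-vertex trees. As reference points I would first record the endpoint values directly from the definition: for $P_n$ only the two pendant edges contribute (each is a $(1,2)$-edge giving $|1-4|=3$), while every internal edge joins two degree-$2$ vertices and contributes $0$, so $2SO_1(P_n)=6$; for $S_n$ each of the $n-1$ edges joins the center to a leaf, giving $2SO_1(S_n)=(n-1)\big[(n-1)^2-1\big]=n(n-1)(n-2)$.

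For the lower bound $SO_1(P_n)<SO_1(T)$ I would argue by counting leaves. Since $n\geq 3$, the neighbour $v$ of any leaf has $d_v\geq 2$, so each pendant edge contributes $d_v^2-1\geq 3$ to $2SO_1(T)=\sum_{uv\in E(T)}|d_u^2-d_v^2|$, while every remaining edge contributes a nonnegative amount. Writing $L$ for the number of leaves, this yields $2SO_1(T)\geq 3L$. A tree has exactly two leaves if and only if it is a path, so $T\ncong P_n$ forces $L\geq 3$ and hence $2SO_1(T)\geq 9>6=2SO_1(P_n)$, which is the desired strict inequality.

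For the upper bound $SO_1(T)<SO_1(S_n)$ I would use the edge-lifting transformation of Lemma \ref{l1-1} in an extremal argument. Because there are only finitely many $n$-vertex trees, a maximizer $T^{*}$ of $SO_1$ exists. If $T^{*}$ had a non-pendant edge $uv$, then applying the transformation of Lemma \ref{l1-1} to $uv$ produces another $n$-vertex tree with strictly larger $SO_1$, contradicting maximality; hence $T^{*}$ has no non-pendant edge. Deleting all leaves of a tree leaves a subtree on its internal vertices, so ``no non-pendant edge'' means at most one internal vertex, i.e. $T^{*}\cong S_n$. Thus $S_n$ is the unique maximizer, and $SO_1(T)<SO_1(S_n)$ for every $T\ncong S_n$.

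The endpoint computations and the leaf-count bound are routine; the point requiring care is the upper-bound step, specifically verifying that the edge-lifting transformation keeps the object an $n$-vertex tree (it preserves the edge count and connectivity, since $v$ remains joined to $u$ and absorbs all the relocated neighbours) and that the only tree immune to it is the star. I expect this structural bookkeeping, rather than any delicate estimate, to be the crux of the proof.
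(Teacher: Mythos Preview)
The paper does not prove this lemma; it is quoted from \cite{ybai2020} without argument, so there is no in-paper proof to compare against. Your proof is correct and fits naturally into the paper's setup. The lower bound via leaf-counting is clean: each pendant edge contributes at least $d_v^2-1\ge 3$ to $2SO_1$, and any non-path tree has at least three leaves, so $2SO_1(T)\ge 9>6=2SO_1(P_n)$. The upper bound via repeated edge-lifting (Lemma~\ref{l1-1}) is also sound; the one point that genuinely needs checking is that the transformation applied to a tree again yields an $n$-vertex tree, and this holds because in a tree no $w\in N_G(u)\setminus\{v\}$ can already be adjacent to $v$ (otherwise $uvw$ would be a triangle), so no multi-edges arise and the edge count is preserved. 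Your characterization of trees with no non-pendant edge as stars (via the subtree on internal vertices) then finishes the argument.
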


By Lemmas \ref{l1-1} and \ref{l1-2}, we have
\begin{theorem}\label{t1-3}
Let $n\geq 5$ and $T$ be a tree with $n$ vertices and $T\ncong S_{n}$. Then $SO_{1}(T)\leq SO_{1}(S_{n}^{*})$, with equality if and only if $T\cong S_{n}^{*}$ where $S_{n}^{*}$ is the graph obtained from $S_{n}$ by subdividing one edge.
\end{theorem}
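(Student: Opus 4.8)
The plan is to reduce an arbitrary non-star tree to a double star using the edge-lifting transformation of Lemma \ref{l1-1}, and then to optimize $SO_1$ directly over double stars. The first thing I would record is the identity of the extremal graph: since a tree on $n$ vertices obtained from a star by subdividing one edge has exactly two non-pendant vertices, $S_n^*$ is the double star $S_{n-3,1}$, i.e. the tree with one vertex of degree $n-2$ adjacent to $n-3$ leaves and one vertex of degree $2$ adjacent to a single leaf. Because $T\ncong S_n$, the tree $T$ has at least two internal (non-pendant) vertices. The internal vertices of any tree induce a connected subtree, so as long as $T$ has at least three internal vertices there is an edge joining two of them, a non-pendant edge, to which Lemma \ref{l1-1} applies.

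Each edge-lifting along such an edge $uv$ makes $u$ a leaf while leaving the degree of every other vertex unchanged, so it strictly increases $SO_1$ and decreases the number of internal vertices by exactly one. Applying it repeatedly while three or more internal vertices remain, I reach a tree $D$ with exactly two internal vertices, that is, a double star, and $SO_1(T)\le SO_1(D)$, with strict inequality unless $T$ was already a double star. It therefore suffices to maximize $SO_1$ over all double stars $S_{a,b}$ with $a\ge b\ge 1$ and $a+b=n-2$.

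For this last step I would expand $2SO_1(S_{a,b})$ using the degrees $a+1$, $b+1$, and $1$; for $a\ge b$ this gives $2SO_1(S_{a,b})=h(a)+k(b)$ with $h(t)=t^3+3t^2+2t$ and $k(t)=t^3+t^2-2t$. Transferring a single leaf from the smaller centre to the larger one, i.e. replacing $(a,b)$ by $(a+1,b-1)$, changes $2SO_1$ by $[h(a+1)-h(a)]-[k(b)-k(b-1)]=3(a^2-b^2)+9a+b+8$, which is strictly positive for all $a\ge b\ge1$. Hence pushing every leaf onto one centre strictly increases $SO_1$, and among double stars the maximum is attained uniquely at $b=1$, namely $S_{n-3,1}=S_n^*$. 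Combining this with the reduction gives $SO_1(T)\le SO_1(S_n^*)$.

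The structural reduction is routine given Lemma \ref{l1-1}, so I expect the genuine content to be the double-star optimization, essentially the sign of the leaf-transfer difference above. The step that needs the most care is the equality discussion: strictness in Lemma \ref{l1-1} rules out equality whenever $T$ has three or more internal vertices, while the leaf-transfer inequality is strict, so equality in the theorem forces $T$ to be a double star with $b=1$; that is, $T\cong S_n^*$.
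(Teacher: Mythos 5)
Your proof is correct, and it is in fact more complete than what the paper provides: the paper gives no proof of this theorem at all, merely asserting that it follows ``by Lemmas \ref{l1-1} and \ref{l1-2}.'' Your argument makes precise why that citation is insufficient on its own. Lemma \ref{l1-2} only identifies the maximum ($S_n$) and minimum ($P_n$), and the edge-lifting transformation of Lemma \ref{l1-1}, applied to a double star $S_{a,b}$ with $b\ge 2$, produces $S_n$ itself rather than $S_n^*$, so iterated edge-lifting alone cannot separate the double stars from one another. Your reduction (internal vertices induce a subtree, each lift strictly increases $SO_1$ and removes exactly one internal vertex, so one lands on a double star) is the argument the paper is gesturing at, and your leaf-transfer computation $3(a^2-b^2)+9a+b+8>0$ supplies the missing second step that pins down $S_{n-3,1}\cong S_n^*$ as the unique maximizer among double stars; I checked the expansion $2SO_1(S_{a,b})=h(a)+k(b)$ and the difference, and both are right (e.g.\ for $n=5$ it gives $2SO_1(S_{2,1})=24$, matching a direct count). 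Two small points of precision: the edge-lift does change the degree of $v$ (it increases by $d_u-1$), so ``every other vertex unchanged'' should read ``every vertex other than $u$ and $v$''---this does not affect your count of internal vertices since $v$ stays internal; and you silently correct the paper's abuse of notation by reading $S_n^*$ as the $n$-vertex subdivision of $S_{n-1}$ (literally subdividing an edge of $S_n$ yields $n+1$ vertices), which is clearly the intended meaning.
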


\section{Trees and unicyclic graphs with given diameter}
\hskip 0.6cm
Let $\mathcal{T}(n,d)$ be the set of trees with order $n$ and diameter $d$.
Let $T_{n,d}^{i}$ be the tree obtained from the path $P=v_{0}v_{1}\cdots v_{d}$ by attaching $n-d-1$ pendent edges (i.e., $v_{i}u_{1}$, $v_{i}u_{2}$, $\cdots$ $v_{i}u_{n-d-1}$) to the vertex $v_{i}$, see Figure \ref{fig-21}.

Let $2\leq d\leq n-2$ and $\mathcal{U}(n,d)$ be the set of unicyclic graphs with order $n$ and diameter $d$. Let $U_{n,d}^{i}$ (resp. $R_{n,d}^{i}$, $W_{n,d}^{i}$ ) be the graphs obtained from tree $T_{n,d}^{i}$ by adding edge $u_{n-d+1}v_{i+2}$ (resp. $u_{n-d-1}v_{i+1}$, $u_{n-d-2}u_{n-d-1}$), see Figure \ref{fig-21}.

\begin{figure}[ht!]
  \centering
  \scalebox{.16}[.16]{\includegraphics{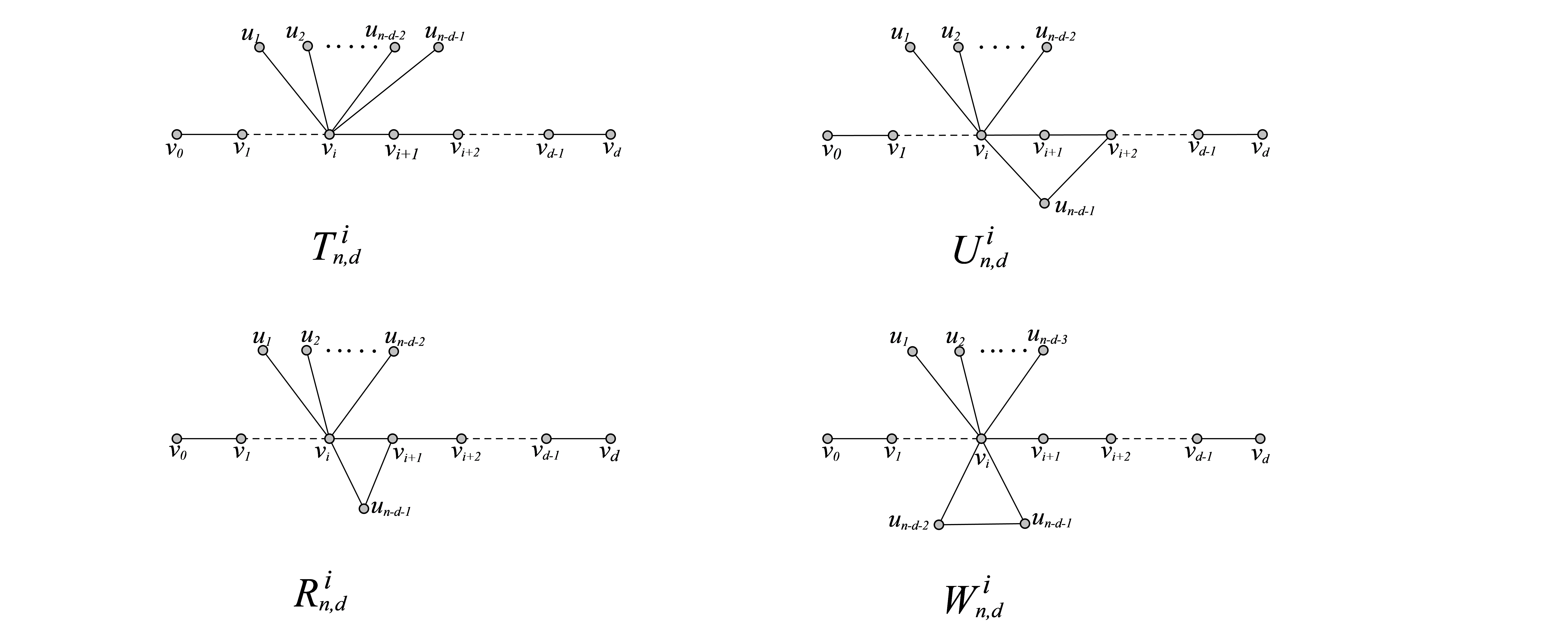}}
  \caption{The graphs $T_{n,d}^{i}$, $U_{n,d}^{i}$, $R_{n,d}^{i}$ and $W_{n,d}^{i}$.}
 \label{fig-21}
\end{figure}

\begin{theorem}\label{t2-1}
Let $2\leq d\leq n-1$ and $T\in \mathcal{T}(n,d)$. Then
$$ SO_{1}(T)\leq \frac{1}{2}(n-d+1)((n-d+1)^{2}-1),$$
with equality if and only if $T\cong T_{n,d}^{i}$ for $i=1,2,\cdots, \lfloor \frac{d}{2} \rfloor$.
\end{theorem}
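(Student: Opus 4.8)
The plan is to first reduce an arbitrary $T \in \mathcal{T}(n,d)$ to a highly structured caterpillar, and then solve an explicit finite optimization. Fix a diametral path $P = v_0 v_1 \cdots v_d$. Since $P$ has length $d = diam(T)$, the endpoints $v_0, v_d$ must have degree $1$ (a second neighbour of $v_0$ would extend $P$ to a path of length $d+1$), and every vertex off $P$ lies in a branch rooted at an internal vertex $v_j$, $1 \le j \le d-1$; moreover such a branch has height at most $\min(j, d-j)$, again because a taller branch would create a path longer than $d$.

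Next I would flatten every branch into a star of pendants using the edge-lifting transformation of Lemma \ref{l1-1}. If some branch rooted at $v_j$ contains a non-pendant edge $uw$, with $w$ the neighbour of $u$ closer to $v_j$, then applying Lemma \ref{l1-1} to $uw$ moves the vertices deeper than $u$ closer to $v_j$; this strictly increases $SO_1$, leaves the path $P$ (hence $diam \ge d$) untouched, and can only decrease branch heights (hence $diam \le d$), so the diameter stays equal to $d$ and we remain in $\mathcal{T}(n,d)$. Iterating terminates at a caterpillar $C$ in which $v_j$ carries $p_j \ge 0$ pendants ($1\le j\le d-1$) with $\sum_j p_j = m := n-d-1$. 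Thus the maximum over $\mathcal{T}(n,d)$ is attained by such a $C$, and every maximiser is of this form.

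Then I reduce to a finite optimization. Writing $\delta_j = d_C(v_j) = 2+p_j$ for internal $j$ and $a_j = \delta_j^2$ (with $a_0 = a_d = 1$), one obtains
\[ 2SO_1(C) = \underbrace{\sum_{j=1}^{d-1} g(p_j)}_{A} + \underbrace{\sum_{j=0}^{d-1} |a_{j+1}-a_j|}_{B}, \qquad g(p) = p\big((2+p)^2 - 1\big). \]
The claim is that $A$ and $B$ are maximised simultaneously, exactly when a single $p_i$ equals $m$. For $A$ this is clean: $g$ is strictly convex on $[0,\infty)$ with $g(0)=0$, hence strictly superadditive, so $A \le g(m)$ with equality iff at most one $p_j$ is nonzero. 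For $B$, which is the total variation of the sequence $1, a_1, \dots, a_{d-1}, 1$, I would show $B \le 2\big((2+m)^2 - 1\big)$: decompose the variation into the contributions of its successive peaks (maximal unimodal stretches), bound the contribution of a stretch of pendant-mass $q$ resting on the interior baseline $4$ by $2\big((2+q)^2 - 4\big)$ (valleys above $4$ and plateaus only help the inequality), account separately for the two edges descending to the endpoint value $1$, and finish with the elementary inequality $\sum_j q_j^2 \le \big(\sum_j q_j\big)^2 = m^2$. Combining, $2SO_1(C) \le g(m) + 2\big((2+m)^2-1\big) = (m+2)\big((m+2)^2-1\big)$, i.e. $SO_1(T) \le \tfrac12 D(D^2-1)$ with $D = n-d+1$, and equality forces $A = g(m)$, hence exactly one nonzero $p_i$.

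Finally, the extremal caterpillars are precisely the $T_{n,d}^i$ with $1 \le i \le d-1$; since reversing $P$ gives $T_{n,d}^i \cong T_{n,d}^{d-i}$, these fall into the $\lfloor d/2\rfloor$ isomorphism classes $i = 1,\dots,\lfloor d/2\rfloor$, and a direct substitution confirms the common value $\tfrac12(n-d+1)\big((n-d+1)^2-1\big)$. The main obstacle is the bound $B \le 2\big((2+m)^2-1\big)$: unlike $A$, the quantity $B$ couples neighbouring vertices and is not separately convex, so the peak/valley bookkeeping for the total variation must be carried out carefully (handling adjacent pendant-bearing vertices that merge into a single unimodal stretch, and stretches abutting the endpoints). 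Establishing this bound in tandem with the superadditivity of $A$, so that one configuration saturates both, is the crux of the argument.
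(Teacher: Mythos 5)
Your proposal is correct, but it takes a genuinely different route from the paper. The paper proves Theorem \ref{t2-1} by induction on $n-d$: it picks a pendent vertex $u$ off a diametral path, deletes it to land in $\mathcal{T}(n-1,d)$, and bounds the increment $2SO_1(T)-2SO_1(T-u)$ term by term using the triangle inequality $|d_v^2-d_{v_i}^2|-|(d_v-1)^2-d_{v_i}^2|\le 2d_v-1$ together with $\Delta\le n-d+1$. You instead first use the edge-lifting transformation (Lemma \ref{l1-1}) to show every maximizer is a caterpillar with $p_j$ pendants on the internal path vertices, and then solve the resulting finite optimization by splitting $2SO_1$ into the pendant-edge sum $A=\sum g(p_j)$ (handled by strict superadditivity of the convex $g$ with $g(0)=0$) and the path total variation $B$. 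Your crux bound $B\le 2\big((2+m)^2-1\big)$ does go through as you sketch it: writing the total variation as $2\sum_i M_i-2\sum_i(\text{interior minima})-2$ with interior minima $\ge 4$, each peak of pendant-mass $q_i$ contributes at most $2(q_i^2+4q_i)$ above the baseline, and $\sum_i(q_i^2+4q_i)\le m^2+4m$ finishes it; and since equality in the global bound already forces $A=g(m)$, the equality characterization does not even require tightness analysis of $B$. What each approach buys: the paper's induction is shorter and is the same template it reuses verbatim for the unicyclic case (Theorem \ref{t2-2}), whereas yours yields more structural information (all extremal trees are caterpillars, with an explicit two-part decomposition of the objective) at the price of the total-variation bookkeeping, which is the only part of your outline that still needs to be written out in full detail.
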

\begin{proof}
We prove the result by using mathematical induction on $n-d$. If $n-d=1$, the $T\cong P_{n}$ and $SO_{1}(P_{n})=3=\frac{1}{2}(n-d+1)((n-d+1)^{2}-1)$.

Suppose that the conclusion holds for $n-d\leq k-1$ $(k\geq 2)$.
Let $n-d=k$ and $T\in \mathcal{T}(n,d)$.
Let $PV(T)$ be the set of pendent vertices of $T$. Suppose that $P$ is the diametral path of $T$,
$u\in PV(T)$ and $u\notin V(P)$.
Let $T^{*}=T-u$. Then $T^{*}\in \mathcal{T}(n-1,d)$. Thus $SO_{1}(T^{*})\leq \frac{1}{2}(n-d)((n-d)^{2}-1)$.

Let $N_{T}(u)=v$ and $N_{T}(v)\setminus \{u\}=\{v_{1},v_{2},\cdots,v_{t}\}$, where $t\geq 1$.
Note that $\Delta \leq n-d+1$ since $T\in \mathcal{T}(n,d)$.
Thus $|d^{2}_{T}(v)-(d_{T}(v)-1)^{2}|=2d_{T}(v)-1=2t+1\leq 2(n-d)+1$.
\begin{align*}
2SO_{1}(T)=&  2SO_{1}(T^{*})+\sum_{i=1}^{t}\{|d^{2}_{T}(v)-d^{2}_{T}(v_{i})|-|(d_{T}(v)-1)^{2}-d^{2}_{T}(v_{i})|\}
    +|d^{2}_{T}(v)-d^{2}_{T}(u)| \\
 \leq &  2SO_{1}(T^{*})+t\{|d^{2}_{T}(v)-(d_{T}(v)-1)^{2}|\}+(t+1)^{2}-1 \\
 \leq &  (n-d)((n-d)^{2}-1)+(n-d)(2(n-d)+1)+(n-d+1)^{2}-1 \\
  =   &  (n-d+1)((n-d+1)^{2}-1),&
\end{align*}
with equality if and only if $T^{*}\cong T_{n-1,d}^{i}$ for $i=1,2,\cdots, \lfloor \frac{d}{2} \rfloor$ and $d_{T}(v)=n-d+1$, i.e., $T\cong T_{n,d}^{i}$ for $i=1,2,\cdots, \lfloor \frac{d}{2} \rfloor$.
\end{proof}

\begin{theorem}\label{t2-2}
Let $4\leq d\leq n-2$ and $G\in \mathcal{U}(n,d)$. Then
$$ SO_{1}(G)\leq \frac{1}{2}[(n-d)(n-d+1)(n-d+2)+12],$$
with equality if and only if $G\cong U_{n,d}^{i}$ for $i=1,2,\cdots, d-3$.
\end{theorem}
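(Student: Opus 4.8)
The plan is to mirror the inductive scheme of Theorem~\ref{t2-1}, running the induction on $n-d$ but now keeping track of the unique cycle. Writing $m=n-d$, the target value is exactly the tree bound of Theorem~\ref{t2-1} raised by $6$ in $SO_1$ (equivalently by $12$ in $2SO_1$), and this ``$+12$'' is precisely the gain produced by the extra cycle edge of $U_{n,d}^{i}$. I would first record this by a short direct computation: passing from $T_{n,d}^{i}$ to $U_{n,d}^{i}$ by adding the edge from a pendent vertex at $v_i$ to the degree-$2$ vertex $v_{i+2}$ changes $2SO_1$ by $-3+5+5+5=+12$, whereas attaching that edge to the path-endpoint $v_d$ instead yields $-6$. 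This asymmetry is exactly what forces the admissible range $i=1,\dots,d-3$ (one needs $v_{i+2}$ interior, i.e. $i+2\le d-1$).

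For the inductive step I would delete a pendent vertex $u\notin V(P)$, where $P$ is a diametral path, and set $G^{*}=G-u$. Removing a leaf preserves both the unique cycle and $P$, so $G^{*}\in\mathcal{U}(n-1,d)$ (here I use $m\ge 3$, i.e. $d\le n-3$, to keep $G^{*}$ in range). Letting $v$ be the neighbour of $u$ and $t=d_G(v)-1$, the same edge-bookkeeping as in Theorem~\ref{t2-1} gives
\begin{align*}
2SO_1(G)\le 2SO_1(G^{*})+t\bigl(2d_G(v)-1\bigr)+\bigl(d_G^2(v)-1\bigr)=2SO_1(G^{*})+3t(t+1).
\end{align*}
The crucial input is the degree bound $\Delta(G)\le n-d+1=m+1$, valid for every connected graph of diameter $d$: a shortest $x$--$y$ path of length $d$ meets any vertex's neighbourhood in at most three consecutive vertices, forcing $n\ge d+\Delta-1$. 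Hence $t\le m$, and feeding in the induction hypothesis $2SO_1(G^{*})\le (m-1)m(m+1)+12$ together with $3t(t+1)\le 3m(m+1)$ telescopes to $2SO_1(G)\le m(m+1)(m+2)+12$, which is the claim.

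The genuine obstacle is that this deletion is unavailable exactly when $G$ has no pendent vertex off $P$, and I would treat this as a separate branch. Such a $G$ is a cycle with at most two attached pendent paths (every leaf must then be an endpoint of $P$), so $\Delta(G)\le 3$ and only the edges incident to the $\le 2$ branch vertices and $\le 2$ path-ends contribute; one gets $2SO_1(G)\le 6\cdot 8+2\cdot 3=54<m(m+1)(m+2)+12$ whenever $m\ge 3$, so these graphs are never extremal and the bound holds for them automatically. The base case $m=2$ ($d=n-2$) must be done by hand: here exactly one vertex lies off $P$ and $\Delta\le 3$, so one enumerates the finitely many attachments (a degree-$2$ vertex joined to two path vertices, or a leaf plus one path chord) and checks that the maximum of $2SO_1$ equals $36$, attained only by the $v_i$--$v_{i+2}$ configuration, i.e. $U_{n,n-2}^{i}$.

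Finally, for the equality discussion I would trace both inequalities back. Equality forces $2SO_1(G^{*})$ to meet the induction hypothesis, so $G^{*}\cong U_{n-1,d}^{i}$, and it forces $t=m$ together with $d_G(v_j)\le m$ for every neighbour $v_j$ of $v$ (this is when $\bigl||d_G^2(v)-d_G^2(v_j)|-|(d_G(v)-1)^2-d_G^2(v_j)|\bigr|$ attains $2d_G(v)-1$). The first condition identifies $v$ as the unique maximum-degree vertex of $U_{n-1,d}^{i}$, and re-attaching the leaf there reconstructs exactly $U_{n,d}^{i}$ with $i\in\{1,\dots,d-3\}$, closing the characterization. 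I expect the two non-inductive branches---the exceptional cycle-with-paths graphs and the hand-checked base case $m=2$---to be the only fiddly parts; the inductive inequality itself is a direct transcription of the tree argument once the degree bound $\Delta\le m+1$ is in place.
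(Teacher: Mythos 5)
Your proposal follows essentially the same route as the paper: induction on $n-d$ with the base case $n-d=2$ checked by hand, deletion of a pendent vertex off a diametral path with the identical edge-bookkeeping and degree bound $\Delta\le n-d+1$, and a separate elementary treatment of the unicyclic graphs having at most two pendent vertices (the paper's cases $|PV(G)|\le 2$). The only inaccuracy is your claim that $\Delta(G)\le 3$ in that exceptional branch --- if both pendent paths attach at the same cycle vertex, that vertex has degree $4$ --- but a direct check gives $2SO_1(G)=54$ there as well, so your numerical bound and the strict inequality against $m(m+1)(m+2)+12$ for $m\ge 3$ survive.
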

\begin{proof}
We prove the result by using mathematical induction on $n-d$. If $n-d=2$, the $G\cong R^{i}_{n,n-2}$ for $1\leq i\leq \lfloor \frac{n-3}{2} \rfloor$, $R_{n,n-2}^{3}$, $C_{4}$, $U^{i}_{n,n-2}$ for $1\leq i\leq \lfloor \frac{n-4}{2} \rfloor$, $U_{n,n-2}^{n-4}$ and $SO_{1}(R^{i}_{n,n-2})=13$ for $1\leq i\leq \lfloor \frac{n-3}{2} \rfloor$, $SO_{1}(R_{n,n-2}^{3})=9$, $SO_{1}(C_{4})=0$, $SO_{1}(U^{i}_{n,n-2})=18$ for $1\leq i\leq \lfloor \frac{n-4}{2} \rfloor$, $SO_{1}(U_{n,n-2}^{4})=9$. The conclusion holds.

Suppose that the conclusion holds for $n-d\leq k-1$ $(k\geq 3)$.
Let $n-d=k$ and $G\in \mathcal{U}(n,d)$.
If $G\cong C_{n}$, then it is obvious that the conclusion holds. Thus we suppose $G\ncong C_{n}$, then
$|PV(G)|\geq 1$.

If $|PV(G)|=1$, then $SO_{1}(G)=9<36=\frac{1}{2}[(n-d)(n-d+1)(n-d+2)+12]$.

If $|PV(G)|=2$, then without loss of generality, we let $PV(G)=\{u,v\}$, $C$ be the unique cycle of $G$, $P$ be the shortest path between $u$ and $v$.

If $|V(P\cap C)|\geq 3$, then $SO_{1}(G)=18$;
If $|V(P\cap C)|=2$, then $SO_{1}(G)=13$;
If $|V(P\cap C)|=1$, then $SO_{1}(G)=27$;
If $|V(P\cap C)|=0$, then $SO_{1}(G)=13$ or $18$.
Thus we have $SO_{1}(G)<36$.

If $|PV(G)|\geq 3$, then without loss of generality, we let $P$ be the diameter path of $G$.
Let $u\notin V(P)$ be one pendent vertex of $G$.
Let $G^{*}=G-u$, then $G^{*}\in \mathcal{U}(n-1,d)$.
Thus $SO_{1}(G^{*})\leq \frac{1}{2}[(n-d-1)(n-d)(n-d+1)+12]$.

Let $N_{G}(u)=v$, $N_{G}(v)\setminus \{u\}=\{v_{1},v_{2},\cdots,v_{t}\}$ $(t\geq 1)$.
Note that $t\leq \Delta-1 \leq n-d$ since $G\in \mathcal{U}(n,d)$.
Thus $|d^{2}_{G}(v)-(d_{G}(v)-1)^{2}|=2d_{G}(v)-1=2(t+1)-1\leq 2(n-d)-1$.
\begin{align*}
2SO_{1}(G)=&  2SO_{1}(G^{*})+\sum_{i=1}^{t}\{|d^{2}_{G}(v)-d^{2}_{G}(v_{i})|-|(d_{G}(v)-1)^{2}-d^{2}_{G}(v_{i})|\}
    +|d^{2}_{G}(v)-d^{2}_{G}(u)| \\
 \leq &  2SO_{1}(G^{*})+t\{|d^{2}_{G}(v)-(d_{G}(v)-1)^{2}|\}+(t+1)^{2}-1 \\
 \leq &  (n-d-1)(n-d)(n-d+1)+12+(n-d)(2(n-d)+1)+(n-d+1)^{2}-1 \\
  =   &  (n-d)(n-d+1)(n-d+2)+12,&
\end{align*}
with equality if and only if $G^{*}\cong U_{n-1,d}^{i}$ for $i=1,2,\cdots, d-3$ and $d_{G}(v)=n-d+1$, i.e., $G\cong U_{n,d}^{i}$ for $i=1,2,\cdots, d-3$.
\end{proof}

In Theorem \ref{t2-2}, we only consider the case of $4\leq d\leq n-2$.
For $d=2$ or $3$, we can easily obtained that:
(1) If $G\in \mathcal{U}(n,2)$, then $SO_{1}(G)\leq SO_{1}(R_{n,2}^{1})$;
(2) If $G\in \mathcal{U}(n,3)$, then $SO_{1}(G)\leq SO_{1}(R_{n,3}^{1})$.

\section{Trees with given matching number}
\hskip 0.6cm
Let $\beta(G)$ be the matching number which is the number of edges in a maximum matching of a graph $G$.
Let $\mathcal{M}(n,\beta)$ be the set of trees with $n$ vertices and matching number $\beta$.
Let $M_{n}^{\beta}$ be the tree obtained from star $S_{n-\beta+1}$ by subdividing $\beta-1$ pendent vertices of $S_{n-\beta+1}$. It is obvious that $M_{2\beta}^{\beta}$ has a perfect matching. Let $M$ be a matching, we call a vertex is M-saturated if the vertex is incident with an edge of $M$.

\begin{lemma}\label{l4-1}{\rm\cite{huli2002}}
\rm{$(i)$} If $\beta \geq 2$ and $T\in \mathcal{M}(2\beta,\beta)$, then $T$ has a pendent vertex whose unique neighbor is of degree two;
\rm{$(ii)$} If $n>2\beta$ and $T\in \mathcal{M}(n,\beta)$, then there exists a $\beta$-matching $M$ and a pendent vetex $v$ such that $M$ does not saturate $v$.
\end{lemma}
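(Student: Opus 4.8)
The plan is to read off both statements from the structure of a longest path in $T$, whose two endpoints are automatically pendent vertices. I would fix a longest path $v_{0}v_{1}\cdots v_{k}$; since $n\geq 3$ in both parts we have $k\geq 2$, the vertex $v_{0}$ is a pendent vertex, and maximality of the path forces every neighbor of $v_{1}$ other than $v_{2}$ to be a pendent vertex as well (otherwise such a neighbor would have a further neighbor and the path could be lengthened). This local picture around $v_{1}$ is what drives everything.

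For $(i)$, the key observation is that a matching of size $\beta$ in a graph on $2\beta$ vertices is perfect, so $T\in\mathcal{M}(2\beta,\beta)$ has a perfect matching $M$. In $M$ every pendent vertex is matched to its unique neighbor. Hence if $v_{1}$ had two or more pendent neighbors, they would all have to be matched to $v_{1}$ simultaneously, which is impossible. Therefore $v_{0}$ is the only pendent neighbor of $v_{1}$ and $d_{T}(v_{1})=2$, so $v_{0}$ is a pendent vertex whose unique neighbor has degree two, giving $(i)$ with essentially no computation.

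For $(ii)$ I would argue by strong induction on $n$, the small base cases (e.g. $T\cong P_{3}$) being immediate. Returning to $v_{1}$: if $v_{1}$ has at least two pendent neighbors, then in \emph{any} maximum matching at most one of them is matched to $v_{1}$, so some pendent neighbor of $v_{1}$ is unsaturated, and we are done without recursion. Otherwise $d_{T}(v_{1})=2$ with neighbors $v_{0},v_{2}$. If some maximum matching misses the pendent vertex $v_{0}$ we are done. If every maximum matching saturates $v_{0}$, then every maximum matching contains the forced edge $v_{0}v_{1}$; deleting $v_{0}$ and $v_{1}$ leaves a tree $T'$ with $\beta(T')=\beta-1$ and $n-2>2(\beta-1)$, so the induction hypothesis yields a maximum matching $M'$ of $T'$ missing a pendent vertex $v$ of $T'$, and $M=M'\cup\{v_{0}v_{1}\}$ is a maximum matching of $T$.

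The step I expect to be the main obstacle is precisely this reduction, because the pendent vertex $v$ of $T'$ need not be pendent in $T$: deleting $v_{1}$ lowers the degree of $v_{2}$, so $v$ could coincide with $v_{2}$ when $d_{T}(v_{2})=2$. I would resolve this by a matching-rotation argument. In that exceptional case $v_{2}$ is unsaturated by $M$, so maximality forces its other neighbor $v_{3}$ to be saturated; replacing $v_{0}v_{1}$ by $v_{1}v_{2}$ produces another maximum matching $M^{\ast}=(M\setminus\{v_{0}v_{1}\})\cup\{v_{1}v_{2}\}$ which now misses the genuine pendent vertex $v_{0}$. In every branch one exhibits a maximum matching together with an unsaturated pendent vertex, which closes the induction. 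The only facts used are that $T$ is a tree (so each deletion keeps it connected and the augmenting-edge reasoning is valid) and the trivial observation that a pendent vertex can be matched only to its unique neighbor.
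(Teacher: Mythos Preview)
The paper does not actually prove Lemma~\ref{l4-1}; it is quoted verbatim from Hou and Li~\cite{huli2002} and used as a black box, so there is no in-paper argument to compare against. Your proof is correct and is essentially the standard longest-path argument one finds for these facts (and very likely close to the original in~\cite{huli2002}): part~(i) follows immediately from the observation that in a perfect matching every leaf is matched to its unique neighbour, and part~(ii) is handled by the induction/rotation you describe. The one delicate point you flagged---that the pendent vertex produced by the inductive hypothesis in $T'$ might be $v_{2}$---is resolved correctly by swapping $v_{0}v_{1}$ for $v_{1}v_{2}$, and the boundary case $\beta=1$ collapses to $T\cong P_{3}$, which you cover as a base case.
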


By Lemma \ref{l1-1}, we can easily obtained the following result.
\begin{lemma}\label{l4-2}
Let $M$ is a $\beta$-matching of $T$ and $T\in \mathcal{M}(n,\beta)$ has the maximum first Sombor index $SO_{1}(T)$. Then

\rm{$(i)$} If $u\notin PV(T)$, then $u$ is $M$-saturated;

\rm{$(ii)$} If $uv\in M$, then $uv$ is a pendent edge of $T$.
\end{lemma}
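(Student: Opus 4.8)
The plan is to argue by contradiction in both parts, using the edge-lifting transformation of Lemma~\ref{l1-1} as the engine: if the conclusion fails I would produce a tree $T^{*}\in\mathcal{M}(n,\beta)$ with $SO_{1}(T^{*})>SO_{1}(T)$, contradicting the maximality of $T$. The whole difficulty is that Lemma~\ref{l1-1} only guarantees that $SO_{1}$ goes up; I must check separately that the transformation keeps the graph inside $\mathcal{M}(n,\beta)$, i.e. that it does not alter the matching number.

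The technical heart is a single observation about edge-lifting on a non-pendent edge $uv$. Write $T^{*}$ for the graph obtained by moving every neighbor of $u$ except $v$ onto $v$; then in $T^{*}$ the vertex $u$ is a leaf adjacent to $v$, and---crucially---deleting both $u$ and $v$ from $T$ and from $T^{*}$ yields the \emph{same} forest $F$, because the transformation only reroutes, at their tops, the subtrees hanging off $u$ while leaving their internal structure intact. Since $u$ is a leaf of $T^{*}$, there is a maximum matching of $T^{*}$ using $uv$, so $\beta(T^{*})=1+\beta(F)$; on the other hand, taking $uv$ together with a maximum matching of $F$ shows $\beta(T)\ge 1+\beta(F)$.

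For part $(ii)$ I would take $uv\in M$ with $uv$ non-pendent and apply this to $uv$. Because $M$ is a $\beta$-matching containing $uv$, the set $M\setminus\{uv\}$ is a matching of $F$, so $\beta(F)\ge\beta-1$; combined with $\beta=\beta(T)\ge 1+\beta(F)$ this forces $\beta(F)=\beta-1$ and hence $\beta(T^{*})=\beta$, giving the desired contradiction.

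For part $(i)$ I would first analyse the neighbors of an unsaturated non-leaf $u$: any neighbor $v$ must be $M$-saturated (otherwise $M\cup\{uv\}$ beats $M$) and cannot be a leaf (a leaf neighbor would be saturated only through the edge $uv$, making $u$ saturated), so $uv$ is non-pendent and edge-lifting applies. Since $u$ is unsaturated, $M$ contains no edge at $u$, so every edge of $M$ survives into $T^{*}$ and $\beta(T^{*})\ge\beta$; the forest identity gives $\beta(T^{*})=1+\beta(F)\le\beta(T)=\beta$, so $\beta(T^{*})=\beta$ and once more $SO_{1}(T^{*})>SO_{1}(T)$ is a contradiction. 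The main obstacle throughout is exactly this matching-number bookkeeping under edge-lifting; once the forest identity $T-u-v=T^{*}-u-v$ and the leaf-matching fact are in hand, both parts close quickly.
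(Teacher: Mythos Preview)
Your proposal is correct and follows the paper's intended approach: the paper proves Lemma~\ref{l4-2} simply by citing the edge-lifting transformation (Lemma~\ref{l1-1}), leaving all details to the reader. You have supplied exactly those details, and in particular the forest identity $T-u-v=T^{*}-u-v$ together with the leaf-matching observation is the clean way to verify that $\beta(T^{*})=\beta$, which the paper does not spell out.
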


\begin{theorem}\label{t4-3}
Let $T\in \mathcal{M}(2\beta,\beta)$. Then
$ SO_{1}(T)\leq \frac{1}{2}(\beta-1)\beta(\beta+1),$
with equality if and only if $T\cong M_{2\beta}^{\beta}$.
\end{theorem}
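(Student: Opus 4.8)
The plan is to argue by induction on the matching number $\beta$, peeling off one pendant edge of the perfect matching at each step. The base case $\beta=1$ is immediate, since $\mathcal{M}(2,1)=\{P_{2}\}=\{M_{2}^{1}\}$ and $SO_{1}(P_{2})=\frac{1}{2}|1^{2}-1^{2}|=0=\frac{1}{2}(1-1)\cdot 1\cdot 2$. Before the induction I would record one structural fact that governs the whole argument: every $T\in\mathcal{M}(2\beta,\beta)$ satisfies $\Delta(T)\le\beta$. Indeed, if $c$ is a vertex of degree $\Delta$, then in a perfect matching $M$ the vertex $c$ is matched to one neighbour, while each of the remaining $\Delta-1$ neighbours must be matched to a distinct vertex that is neither $c$ nor a neighbour of $c$ (two neighbours of $c$ cannot be matched to each other, as a tree is triangle-free). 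This exhibits at least $1+\Delta+(\Delta-1)=2\Delta$ vertices, so $2\beta\ge 2\Delta$.

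For the inductive step, let $T\in\mathcal{M}(2\beta,\beta)$ with $\beta\ge 2$. By Lemma \ref{l4-1}(i) there is a pendant vertex $u$ whose unique neighbour $v$ has degree $2$; write $w$ for the other neighbour of $v$ and set $d=d_{T}(w)$. Since $u$ is pendant, every perfect matching is forced to use the edge $uv$, so $M\setminus\{uv\}$ is a perfect matching of $T':=T-\{u,v\}$, whence $T'\in\mathcal{M}(2(\beta-1),\beta-1)$. By the inductive hypothesis $SO_{1}(T')\le\frac{1}{2}(\beta-2)(\beta-1)\beta$, with equality only for $T'\cong M_{2(\beta-1)}^{\beta-1}$. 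Passing from $T'$ to $T$ raises the degree of $w$ from $d-1$ to $d$, adds the edge $vw$ (contributing $d^{2}-4$ to $2SO_{1}$) and the pendant edge $uv$ (contributing $3$). Writing $w_{1},\dots,w_{d-1}$ for the neighbours of $w$ in $T'$ (whose degrees are unchanged in $T$, since $v$'s only neighbours are $u$ and $w$), the reverse triangle inequality gives $|d^{2}-d_{T}^{2}(w_{i})|-|(d-1)^{2}-d_{T}^{2}(w_{i})|\le 2d-1$, so
\[
2SO_{1}(T)\le 2SO_{1}(T')+(d-1)(2d-1)+(d^{2}-4)+3=2SO_{1}(T')+3d(d-1).
\]
As $d\le\Delta\le\beta$ and $3d(d-1)$ is increasing in $d$, combining with the inductive bound yields $2SO_{1}(T)\le(\beta-2)(\beta-1)\beta+3\beta(\beta-1)=\beta(\beta-1)(\beta+1)$, which is the claimed inequality.

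The delicate part, where I expect to spend the most care, is the equality discussion. Equality forces three conditions simultaneously: $T'\cong M_{2(\beta-1)}^{\beta-1}$, the tightness $d=\beta$ of $d\le\beta$, and $d_{T}(w_{i})\le d-1$ for every neighbour $w_{i}$ of $w$ in $T'$ (this last being exactly when the reverse triangle inequality is sharp). The first two conditions give $d_{T'}(w)=\beta-1=\Delta(T')$, and since the centre is the unique vertex of $M_{2(\beta-1)}^{\beta-1}$ attaining this degree, $w$ must be that centre; attaching the path $w$--$v$--$u$ to it reproduces exactly $M_{2\beta}^{\beta}$, and one checks that the third condition ($d_{T}(w_{i})\le\beta-1$, the $w_{i}$ having degree at most $2$) is then automatically satisfied. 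Conversely, a direct edge count gives $2SO_{1}(M_{2\beta}^{\beta})=(\beta^{2}-1)+(\beta-1)(\beta^{2}-4)+3(\beta-1)=\beta(\beta-1)(\beta+1)$, so the bound is attained precisely at $M_{2\beta}^{\beta}$.

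The main obstacle is therefore not the arithmetic (the collapse of the increment to $3d(d-1)$ is routine) but the bookkeeping in the equality analysis: one must verify that the vertex $w$ is genuinely forced to be the centre of $M_{2(\beta-1)}^{\beta-1}$, and that the neighbour-term estimate is sharp term by term. The degree bound $\Delta\le\beta$ is the linchpin, as it is what converts the generic increment $3d(d-1)$ into the exact contribution $3\beta(\beta-1)$ needed to close the induction.
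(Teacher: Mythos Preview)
Your proof is correct and follows essentially the same inductive route as the paper's: use Lemma~\ref{l4-1}(i) to locate a pendant $u$ with degree-$2$ neighbour $v$, delete $\{u,v\}$, bound the increment via $\Delta\le\beta$, and close the induction. Your self-contained counting argument for $\Delta(T)\le\beta$ is in fact cleaner than the paper's appeal to Lemma~\ref{l4-2}; the only slip is the claim that the centre of $M_{2(\beta-1)}^{\beta-1}$ is the \emph{unique} vertex of degree $\beta-1$, which fails for $\beta\in\{2,3\}$ (where $M_{2}^{1}=P_{2}$ and $M_{4}^{2}=P_{4}$), but attaching the path $w\text{--}v\text{--}u$ to any vertex of degree $\beta-1$ in those cases still yields $M_{2\beta}^{\beta}$, so the conclusion is unaffected.
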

\begin{proof}
We prove the result by using mathematical induction on $\beta$.
It is obvious that the conclusion holds for $\beta=1$ or $2$.
Suppose the conclusion holds for $\mathcal{M}(2k,k)$ where $k<\beta$.
Let $T\in \mathcal{M}(2\beta,\beta)$.
Since $T\in \mathcal{M}(2\beta,\beta)$ and Lemma \ref{l4-2}, then there exists a pendent vertex $u$ and $N_{T}(u)=v$, $d_{T}(u)=2$.
Let $T^{*}=T-u-v$. Then $T^{*}\in \mathcal{M}(2\beta-2,\beta-1)$. Thus $SO_{1}(T^{*})\leq \frac{1}{2}(\beta-2)(\beta-1)\beta$.

Let $z\in N_{T}(v)\setminus \{u\}$, $d_{T}(z)=t$, $N_{T}(z)\setminus \{v\}=\{z_{1},z_{2},\cdots,z_{t-1}\}$ and $d_{T}(z_{i})=k_{i}$.
By Lemma \ref{l4-2}, we have $t\leq n-1-(\beta-1)=2\beta-1-(\beta-1)=\beta$. Then
\begin{align*}
2SO_{1}(T)=&  2SO_{1}(T^{*})+3+(t^{2}-2^{2})+\sum_{i=1}^{t-1}[|t^{2}-k_{i}^{2}|-|(t-1)^{2}-k_{i}^{2}|] \\
 \leq &  2SO_{1}(T^{*})+3+(t^{2}-2^{2})+(t-1)(t^{2}-(t-1)^{2}) \\
 \leq &  (\beta-2)(\beta-1)\beta+3+\beta^{2}-4+(\beta-1)(2\beta-1) \\
  =   &  (\beta-1)\beta(\beta+1)\beta,&
\end{align*}
the above equalities hold simultaneously if and only if $T^{*}\cong M_{2\beta-2}^{\beta-1}$ and $t=\beta$, i.e., $T\cong M_{2\beta}^{\beta}$.
\end{proof}

\begin{theorem}\label{t4-4}
Let $T\in \mathcal{M}(n,\beta)$. Then
$ SO_{1}(T)\leq \frac{1}{2}(n-\beta-1)(n-\beta)(n-\beta+1),$
with equality if and only if $T\cong M_{n}^{\beta}$.
\end{theorem}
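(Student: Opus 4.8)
The plan is to mirror the structure of Theorem~\ref{t4-3}, running a double induction: the base of the argument is the perfect-matching case $n=2\beta$ already handled in Theorem~\ref{t4-3}, and then I would induct on $n-2\beta$ (or equivalently on $n$ with $\beta$ fixed) to reach the general case $n>2\beta$. The target extremal tree $M_n^\beta$ is obtained from $S_{n-\beta+1}$ by subdividing $\beta-1$ of its pendent edges, so for $n=2\beta$ it specializes to $M_{2\beta}^\beta$ and the formula $\tfrac12(n-\beta-1)(n-\beta)(n-\beta+1)$ specializes to $\tfrac12(\beta-1)\beta(\beta+1)$, consistent with the previous theorem. This consistency check tells me the induction should close.

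First I would dispose of the base case $n=2\beta$ by invoking Theorem~\ref{t4-3} directly. For the inductive step, assume $n>2\beta$ and that the bound holds for all trees in $\mathcal{M}(n-1,\beta)$. By Lemma~\ref{l4-1}(ii), since $n>2\beta$ there is a $\beta$-matching $M$ and a pendent vertex $u$ not saturated by $M$. The key move is to delete this unsaturated pendent vertex: set $T^*=T-u$. Because $u$ is $M$-unsaturated, $M$ remains a matching in $T^*$, so $T^*$ still has matching number $\beta$ (it cannot increase on removing a vertex, and $M$ witnesses that it is at least $\beta$), giving $T^*\in\mathcal{M}(n-1,\beta)$ and hence $SO_1(T^*)\le \tfrac12(n-\beta-2)(n-\beta-1)(n-\beta)$ by the induction hypothesis.

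Next I would bound the change $2SO_1(T)-2SO_1(T^*)$ exactly as in Theorem~\ref{t4-3}. Writing $N_T(u)=v$ and $N_T(v)\setminus\{u\}=\{v_1,\dots,v_t\}$, the difference telescopes into $\sum_{i=1}^t\{|d_T^2(v)-d_T^2(v_i)|-|(d_T(v)-1)^2-d_T^2(v_i)|\}+|d_T^2(v)-d_T^2(u)|$, which is bounded above by $t\,(2d_T(v)-1)+((t+1)^2-1)$. The crucial arithmetic input is the degree bound: I expect $\Delta(T)\le n-\beta+1$, equivalently $t\le n-\beta$, coming from Lemma~\ref{l4-2}(i)—every non-pendent vertex must be $M$-saturated, so among $n$ vertices at most $\beta$ edges of $M$ can absorb non-pendent vertices, forcing the branch vertex degree down. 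Substituting $t\le n-\beta$ and $d_T(v)\le n-\beta+1$ into the induction hypothesis and simplifying should collapse to $(n-\beta-1)(n-\beta)(n-\beta+1)$.

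The main obstacle will be the equality analysis rather than the inequality chain. I must verify that simultaneous equality forces both $T^*\cong M_{n-1}^\beta$ (via the induction hypothesis) and $d_T(v)=n-\beta+1$ with $u$ attached at the correct vertex, and then check that reattaching $u$ produces exactly $M_n^\beta$ and not some other tree achieving the same bound; this requires confirming that the high-degree vertex of $M_{n-1}^\beta$ is the unique legal attachment point given the matching and diameter constraints. A secondary subtlety is ensuring the deletion in Lemma~\ref{l4-1}(ii) genuinely yields a tree with the \emph{same} matching number $\beta$ (not $\beta-1$), which is exactly why the lemma is phrased in terms of an \emph{unsaturated} pendent vertex; I would make that step explicit to keep the induction on $\mathcal{M}(\cdot,\beta)$ clean.
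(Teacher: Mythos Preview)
Your plan is the paper's plan: induct on $n$ with $\beta$ fixed, use Theorem~\ref{t4-3} as the base $n=2\beta$, delete an $M$-unsaturated leaf $u$ supplied by Lemma~\ref{l4-1}(ii) so that $T^*=T-u\in\mathcal{M}(n-1,\beta)$, and bound the increment $2SO_1(T)-2SO_1(T^*)$ in terms of $d_T(v)$ where $v=N_T(u)$.

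The gap is your degree bound: you assert $\Delta(T)\le n-\beta+1$, i.e.\ $t\le n-\beta$ in your notation, but with that the arithmetic does \emph{not} close. Your increment bound is $t(2d_T(v)-1)+((t+1)^2-1)=3t(t+1)$; with $t\le n-\beta$ this gives at most $3(n-\beta)(n-\beta+1)$, and adding the inductive bound $(n-\beta-2)(n-\beta-1)(n-\beta)$ yields $(n-\beta)\bigl((n-\beta)^2+5\bigr)$, strictly larger than the target $(n-\beta-1)(n-\beta)(n-\beta+1)=(n-\beta)\bigl((n-\beta)^2-1\bigr)$. What is actually needed, and what the paper uses, is the sharper bound $d_T(v)\le n-\beta$ (i.e.\ $t\le n-\beta-1$), which makes $3t(t+1)\le 3(n-\beta-1)(n-\beta)$ exactly fill the gap and forces equality precisely when $d_T(v)=n-\beta$, pinning down $M_n^\beta$. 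Your appeal to Lemma~\ref{l4-2}(i) alone does not yield this; the paper's route uses Lemma~\ref{l4-2}(ii) (for extremal $T$, every $M$-edge is pendent, so the $\beta-1$ leaf endpoints of the other matching edges are non-neighbours of $v$, giving $d_T(v)\le (n-1)-(\beta-1)=n-\beta$). Alternatively, note that in a tree the neighbourhood of any vertex is independent, so $\Delta(T)\le i(T)=n-\beta(T)$ for every tree, which gives the bound without restricting to extremal $T$.
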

\begin{proof}
It is obvious that the conclusion holds for $\beta=2$ or $3$.
By Theorem \ref{t4-4}, the conclusion holds for $n=2\beta$.
Suppose the conclusion holds for $\mathcal{M}(k,\beta)$ where $k<n$ and $k>2\beta$.
Let $T\in \mathcal{M}(n,\beta)$ and $M$ is a $\beta$-matching of $T$.
Since $T\in \mathcal{M}(n,\beta)$ and Lemma \ref{l4-2}, then there exists a pendent vertex $u$
such that $T-u$ also has a $\beta$-matching.
Let $N_{T}(u)=z$, $d_{T}(z)=$, $N_{T}(z)\setminus \{u\}=\{z_{1},z_{2},\cdots,z_{t-1}\}$ and $d_{T}(z_{i})=k_{i}$.
Let $T^{*}=T-u$. Then $T^{*}\in \mathcal{M}(n-1,\beta)$. Thus $SO_{1}(T^{*})\leq \frac{1}{2}(n-\beta-2)(n-\beta-1)(n-\beta)$. By Lemma \ref{l4-2}, We also have that $t\leq n-1-(\beta-1)=n-\beta$. Then
\begin{align*}
2SO_{1}(T)=&  2SO_{1}(T^{*})+(t^{2}-1^{2})+\sum_{i=1}^{t-1}[|t^{2}-k_{i}^{2}|-|(t-1)^{2}-k_{i}^{2}|] \\
 \leq &  2SO_{1}(T^{*})+(t^{2}-1^{2})+(t-1)(2t-1)  \\
 \leq &  (n-\beta-2)(n-\beta-1)(n-\beta)+(n-\beta)^{2}-1+(n-\beta-1)(2(n-\beta)-1)\\
  =   &  (n-\beta-1)(n-\beta)(n-\beta+1),&
\end{align*}
the above equalities hold simultaneously if and only if $T^{*}\cong M_{n-1}^{\beta}$ and $t=n-\beta$, i.e., $T\cong M_{n}^{\beta}$.
\end{proof}

\section{Trees with given number of pendent vertices}
\hskip 0.6cm
Let $\mathcal{Y}(n,p)$ be the set of trees with $n$ vertices and $p$ pendent vertices.
Let $Y_{n}^{p}$ be the broom graph which is obtained from star $S_{p}$ by replacing one pendent edge by a path of length $n-p$.

\begin{theorem}\label{t5-1}
Let $T\in \mathcal{Y}(n,p)$. Then
$ SO_{1}(T)\leq \frac{1}{2}(p-1)p(p+1),$
with equality if and only if $T\cong Y_{n}^{p}$.
\end{theorem}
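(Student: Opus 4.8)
The plan is to mirror the inductive template of Theorems \ref{t2-1}--\ref{t4-4}: delete a single pendent vertex, land in a smaller class $\mathcal{Y}(\cdot,\cdot)$, and control the change in $2SO_1$. First I would record two preliminary facts. Since deleting a vertex of degree $\Delta$ leaves $\Delta$ subtrees, each containing a pendent vertex of $T$, every $T\in\mathcal{Y}(n,p)$ satisfies $\Delta(T)\le p$. A direct edge-by-edge count on the broom gives $2SO_1(Y_n^p)=p(p^2-1)$ (the inner path edges all contribute $0$, so only the $p$ edges meeting the centre and the single transition edge matter), so $Y_n^p$ attains the claimed value and is the natural equality candidate. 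The base cases are $n=p+1$, where $\mathcal{Y}(p+1,p)=\{S_{p+1}\}=\{Y_{p+1}^p\}$ and equality holds, and $p=2$, where $T=P_n=Y_n^2$.

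For the inductive step (on $n$) I would fix $T\in\mathcal{Y}(n,p)$ that is not a star and split according to whether some pendent vertex has a neighbour of degree at least $3$. In Case~B, let $u$ be a pendent vertex with neighbour $z$, $d_T(z)=t\ge3$, and set $T^{*}=T-u\in\mathcal{Y}(n-1,p-1)$, so by induction $2SO_1(T^{*})\le(p-2)(p-1)p$. Writing $N_T(z)\setminus\{u\}=\{w_1,\dots,w_{t-1}\}$ and invoking the elementary inequality $|t^{2}-a|-|(t-1)^{2}-a|\le t^{2}-(t-1)^{2}=2t-1$ (for every $a\ge0$, with equality iff $a\le(t-1)^{2}$), the increment identity used in Theorem~\ref{t4-4} yields
\[
2SO_1(T)\le 2SO_1(T^{*})+(t^{2}-1)+(t-1)(2t-1)=2SO_1(T^{*})+3t(t-1).
\]
Since $t\le\Delta(T)\le p$ we have $3t(t-1)\le3p(p-1)$, and $(p-2)(p-1)p+3p(p-1)=p(p-1)(p+1)$, which closes the case. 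In Case~A every pendent vertex is adjacent to a degree-$2$ vertex; taking the end $v_0$ of a longest path, its neighbour $v_1$ has $d_T(v_1)=2$ with other neighbour $v_2$ of degree $\ge2$, so $T-v_0\in\mathcal{Y}(n-1,p)$, and the two affected edges contribute $|2^{2}-1^{2}|=3$ and $|2^{2}-d_{v_2}^{2}|-|1-d_{v_2}^{2}|=-3$; hence $2SO_1(T)=2SO_1(T-v_0)$ and the bound follows from the induction hypothesis.

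The quantitative inequality above is routine; the step I expect to be the genuine obstacle is the equality characterization. In Case~B equality forces $t=p$, $T^{*}\cong Y_{n-1}^{p-1}$, and $d_{w_i}\le p-1$ for all $i$, and reattaching $u$ to the (unique maximum-degree) centre of $Y_{n-1}^{p-1}$ reproduces $Y_n^p$, so that case is clean. Case~A is the difficulty: the deletion there preserves $2SO_1$ exactly, so it cannot force the broom from a strict increment, and one sees that the maximizer set is closed under appending/contracting degree-$2$ path vertices. This strongly suggests that the extremal family is in fact larger than $\{Y_n^p\}$ alone: for $p=3$, for example, every member of $\mathcal{Y}(n,3)$ is a three-legged spider and one computes $SO_1=\tfrac12(p-1)p(p+1)=12$ for all of them. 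Accordingly, I would devote the bulk of the effort to pinning down the correct equality set and would re-examine the stated ``if and only if $T\cong Y_n^p$'', since the value $\tfrac12(p-1)p(p+1)$ is independent of $n$ and of the free path length.
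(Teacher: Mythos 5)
Your argument follows the same route as the paper's: induction, deleting a pendent vertex, and splitting on whether its neighbour has degree $2$ or degree at least $3$; your Case~B increment computation is exactly the paper's, so for the inequality itself you and the paper agree. Your suspicion about the equality characterization is justified, and in fact it exposes a flaw in the paper's own proof. In the degree-$2$ case the paper asserts $SO_{1}(T^{*})\leq \frac{1}{2}(p-2)(p-1)p$, but there $T^{*}\in\mathcal{Y}(n-1,p)$ (the number of pendent vertices does not drop), so the correct inductive bound is $\frac{1}{2}(p-1)p(p+1)$; combined with the identity $2SO_{1}(T)=2SO_{1}(T^{*})+3+\bigl(|d^{2}_{u_{1}}-4|-|d^{2}_{u_{1}}-1|\bigr)=2SO_{1}(T^{*})$ (valid since $d_{u_{1}}\geq 2$), one gets equality, not the strict inequality the paper claims. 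Consequently the maximizer set is closed under subdividing edges, exactly as you observe, and every starlike tree with $p$ legs (each leg contributing $p^{2}-1$ to $2SO_{1}$ regardless of its length) attains $\frac{1}{2}(p-1)p(p+1)$; your $p=3$ spider example already refutes the stated ``if and only if $T\cong Y_{n}^{p}$''. The correct equality class is $\mathcal{ST}_{n,p}$ rather than $\{Y_{n}^{p}\}$, which is also consistent with Corollary~\ref{c7-2}. So: same method, bound correct, but your instinct to distrust and rework the equality case is exactly where the paper goes wrong.
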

\begin{proof}
We prove the result by using mathematical induction on $p$.
It is obvious that the conclusion holds for $p=2$ or $n-1$. Suppose $3\leq p\leq n-2$.
Suppose the conclusion holds for $p'<p$ and $n'<n$.
Let $T\in \mathcal{Y}(n,p)$ and $v\in VP(T)$.
Let $N_{T}(v)=u$, $N_{T}(u)\setminus \{v\}=\{u_{1},u_{2},\cdots,u_{t-1}\}$. Then $d_{T}(u)=t\leq p$.
Let $T^{*}=T-u$.

If $d_{T}(u)=2$. Then $T^{*}\in \mathcal{Y}(n-1,p)$. Thus $SO_{1}(T^{*})\leq \frac{1}{2}(p-2)(p-1)p$. Then $2SO_{1}(T^{*})=2SO_{1}(T)+3+|d_{T}(u_{1})^{2}-2^{2}|-|d_{T}(u_{1})^{2}-1^{2}|
=(p-2)(p-1)p<(p-1)p(p+1)$.

If $d_{T}(u)\geq 3$. Then $T^{*}\in \mathcal{Y}(n-1,p-1)$. Thus $SO_{1}(T^{*})\leq \frac{1}{2}(p-2)(p-1)p$. Then
\begin{align*}
2SO_{1}(T)=&  2SO_{1}(T^{*})+(t^{2}-1^{2})+\sum_{i=1}^{t-1}[|t^{2}-d_{T}(u_{i})^{2}|-|(t-1)^{2}-d_{T}(u_{i})^{2}|] \\
 \leq &  2SO_{1}(T^{*})+(t^{2}-1^{2})+(t-1)(2t-1)  \\
 \leq &  (p-2)(p-1)p+p^{2}-1+(p-1)(2p-1)\\
  =   &  (p-1)p(p+1),&
\end{align*}
the above equalities hold simultaneously if and only if $T^{*}\cong Y_{n-1}^{p-1}$ and $t=p$, i.e., $T\cong Y_{n}^{p}$.
\end{proof}

Further, we have
\begin{theorem}\label{t5-2}
Let $T$ be a tree with $p$ pendent vertices. Then
$ SO_{1}(T)\leq \frac{1}{2}(p-1)p(p+1).$
\end{theorem}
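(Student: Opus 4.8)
The plan is to derive Theorem~\ref{t5-2} directly from Theorem~\ref{t5-1}, exploiting the fact that the bound obtained there, namely $\frac{1}{2}(p-1)p(p+1)$, is a function of $p$ alone and carries no dependence on the order $n$. Concretely, I would take an arbitrary tree $T$ with exactly $p$ pendent vertices and set $n=|V(T)|$. A tree with $p\ge 2$ leaves possesses at least one non-leaf vertex, so $p\le n-1$; hence the pair $(n,p)$ is admissible and $T\in\mathcal{Y}(n,p)$. Applying Theorem~\ref{t5-1} to this class immediately gives $SO_{1}(T)\le\frac{1}{2}(p-1)p(p+1)$, which is exactly the assertion. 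Because the right-hand side never mentions $n$, a single appeal to Theorem~\ref{t5-1} settles the inequality uniformly over all admissible orders, and no maximisation over $n$ is needed.

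It is worth recording \emph{why} the bound is genuinely $n$-free, since this is the crux of the reduction: the extremal broom $Y_{n}^{p}$ attains the same value for every admissible $n$. Its unique vertex of degree $p$ contributes $(p-1)(p^{2}-1)$ through its $p-1$ pendent edges, while the path attached at that vertex contributes $p^{2}-1$ in total (the first edge giving $p^{2}-4$, all interior edges joining two degree-$2$ vertices giving $0$, and the terminal edge giving $3$; in the degenerate case $n=p+1$ the single attaching edge already gives $p^{2}-1$). Summing yields $2SO_{1}(Y_{n}^{p})=p(p^{2}-1)=(p-1)p(p+1)$ irrespective of $n$, so the whole family $\{Y_{n}^{p}\}$ realises equality.

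Since the argument is a purely formal reduction, there is no genuine obstacle; the only point to guard against is overlooking the admissibility condition $p\le n-1$, which however holds automatically for any tree with $p\ge 2$ leaves. If a self-contained proof were preferred instead, the natural route would be to rerun the induction of Theorem~\ref{t5-1} with $p$ fixed and $n$ as the induction variable: deleting a suitable pendent vertex $u$ whose neighbour has degree $t\le p$ and tracking $SO_{1}(T)-SO_{1}(T-u)$ reproduces the same telescoped bound $\frac{1}{2}(p-1)p(p+1)$ without any reference to $n$.
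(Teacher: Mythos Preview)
Your proposal is correct and follows exactly the route the paper takes: the paper states Theorem~\ref{t5-2} with the phrase ``Further, we have'' immediately after Theorem~\ref{t5-1} and gives no separate proof, so your reduction---observing that the bound $\tfrac{1}{2}(p-1)p(p+1)$ is independent of $n$ and hence applies to any tree with $p$ leaves via its membership in $\mathcal{Y}(|V(T)|,p)$---is precisely the intended argument, only spelled out in more detail than the paper bothers with. One harmless slip: the claim that every tree with $p\ge 2$ leaves has a non-leaf vertex fails for $P_{2}$ (where $p=n=2$), but this edge case satisfies the inequality trivially since $SO_{1}(P_{2})=0<3$, and in any event $\mathcal{Y}(n,p)$ carries no admissibility constraint beyond nonemptiness.
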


\section{Trees with given branching number}
\hskip 0.6cm
A vertex $v$ with $d(v)\geq 3$ is called a branching vertex.
Let $b(T)$ be the branching number which is the number of vertices $v\in V(T)$ with $d(v)\geq 3$.
Let $\mathcal{H}(n,b)$ be the set of trees with $n$ vertices and $b$ branching vertices, where $1\leq b\leq \frac{n-2}{2}$.
Let $H_{max}^{b}$ (resp. $H_{min}^{b}$) be the tree with maximum (resp. minimum) $SO_{1}$ in $\mathcal{H}(n,b)$.

In the following, we first consider the properties of $H_{min}^{b}$.

\begin{lemma}\label{l6-1}
Let $1\leq b\leq \frac{n-2}{2}$. Then $\Delta(H_{min}^{b})\leq 3$.
\end{lemma}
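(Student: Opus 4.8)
The plan is to argue by contradiction: assume $T := H_{min}^{b}$ has maximum degree $\Delta(T)=d\geq 4$, and exhibit a competitor $T'\in\mathcal{H}(n,b)$ with $SO_{1}(T')<SO_{1}(T)$, contradicting minimality. The whole argument rests on a single local \emph{branch-relocation} move that lowers the degree of a maximum-degree vertex by one while keeping both $n$ and the branching number $b$ fixed.

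First I would choose the pivot vertex carefully. Let $F$ be the subgraph of $T$ induced by all vertices of degree $d$. Since $F$ is a forest, it has a vertex $v$ that is a leaf of $F$ (an isolated vertex if its $F$-component is trivial); thus $v$ has degree $d$ in $T$ but \emph{at most one} neighbour of degree $d$, hence at least $d-1\geq 3$ neighbours of degree $<d$. This is exactly what keeps the later sign computation under control. Now detach a branch at $v$: remove an edge $vx$ with $x$ not equal to the (possible) maximum-degree neighbour, splitting $T$ into the subtree $T_{x}$ rooted at $x$ and the remaining tree $T-T_{x}$ containing $v$; then pick a leaf $\ell$ of $T$ lying in $T-T_{x}$ whose neighbour $p$ satisfies $p\neq v$ (so $d_{T}(p)\geq 2$), and add the edge $x\ell$. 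Call the result $T'$. Only three vertices change degree: $v$ drops from $d$ to $d-1$ (still $\geq 3$, hence still branching), $\ell$ rises from $1$ to $2$ (still non-branching), and $x$ keeps its degree; therefore $T'\in\mathcal{H}(n,b)$.

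Next I would compute $2SO_{1}(T')-2SO_{1}(T)$ edge by edge. The edge $vx$ of value $|d^{2}-d_{x}^{2}|$ is replaced by $x\ell$ of value $|4-d_{x}^{2}|$; the edge $p\ell$ changes from $d_{p}^{2}-1$ to $|d_{p}^{2}-4|$, a decrease of exactly $3$ since $d_{p}\geq 2$; and each surviving edge $vy$ contributes $|(d-1)^{2}-d_{y}^{2}|-|d^{2}-d_{y}^{2}|$, which is $-(2d-1)$ when $d_{y}<d$ and $+(2d-1)$ when $d_{y}=d$. Because $v$ has at most one neighbour of degree $d$, the surviving edges contribute at most $(2d-1)\big(1-(d-2)\big)=(2d-1)(3-d)$, while the relocated-branch term is at most $2d_{x}^{2}-4-d^{2}\leq d^{2}-4d-2$ (using $d_{x}\leq d-1$). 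Summing, $2SO_{1}(T')-2SO_{1}(T)\leq (d^{2}-4d-2)-3+(2d-1)(3-d)=-d^{2}+3d-8<0$ for every $d\geq 4$, the desired strict decrease.

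Finally I would dispose of the boundary cases needed to make the move legal. The only way to fail to find a leaf $\ell$ with $p\neq v$ is that $T-T_{x}$ is a star centred at $v$, i.e.\ every branch at $v$ other than $T_{x}$ is a single pendant vertex; in that situation I instead detach one of those pendant vertices ($d_{x}=1$) and reattach it to a leaf inside $T_{x}$, and the analogous (in fact smaller) computation again gives a strict decrease, the genuinely degenerate subcase being $T\cong S_{n}$, which for $b=1$ is visibly non-minimal. The main obstacle, and the reason for the careful opening choice of $v$, is precisely the $+(2d-1)$ term coming from a maximum-degree neighbour: without restricting to a leaf $v$ of $F$ these positive contributions could in principle swamp the negative ones, so the crux is to guarantee at most one such neighbour and then verify that the single surviving positive term is dominated.
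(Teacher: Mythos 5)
Your proof is correct and takes essentially the same route as the paper's: assume $\Delta\geq 4$, choose a maximum-degree vertex with at most one maximum-degree neighbour (the induced-forest argument you give is a clean justification of the paper's parenthetical claim), relocate one branch of that vertex onto a pendant vertex so that $n$ and $b$ are preserved, and check that $SO_{1}$ strictly decreases. The only difference is bookkeeping: the paper splits into cases according to whether the receiving pendant vertex is adjacent to the pivot, while you forbid that configuration in the main move and handle the resulting star-like degeneracy separately; the underlying transformation and estimates coincide.
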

\begin{proof}
Suppose to the contrary that $\Delta(H_{min}^{b})\geq 4$.
Let $u\in V(H_{min}^{b})$ and $d_{}(u)=\Delta(H_{min}^{b})\triangleq x\geq 4$ and
{\bf there are at most one vertex with degree $\Delta(H_{min}^{b})$ in $N_{H_{min}^{b}}(u)$} (since $T$ is a tree, we always find the vertex $u$ satisfying the conditions).

Suppose that $P=v_{0}v_{1}\cdots v_{i-1}(=u_{x-1})u(=v_{i})v_{i+1}(=u_{x})\cdots v_{l}$ is a longest path containing $u$ in $H_{min}^{b}$.
Let $N_{H_{min}^{b}}(u)=\{u_{1},u_{2},\cdots,u_{x-2},u_{x-1},u_{x}\}$, $d_{H_{min}^{b}}(u_{i})=k_{i}$ for $1\leq i\leq x$.

Let $w_{1}\in V(H_{min}^{b})\setminus V(P)$ be a  pendent vertex connected to $u$ via $u_{1}$, i.e., there is a path $w_{1}w_{2}\cdots w_{r}u$ in $H_{min}^{b}$. Suppose $k_{2}<x$.

Let $T^{*}=H_{min}^{b}-u_{2}u+u_{2}w_{1}$. Then $T^{*}\in \mathcal{H}(n,b)$.

\noindent {\bf Case 1}. $w_{1}\neq u_{1}$.
\begin{eqnarray*}
&  & 2SO_{1}(T^{*})-2SO_{1}(H_{min}^{b})\\
& = & |k_{2}^{2}-2^{2}|-|x^{2}-k_{2}^{2}|+|d_{T^{*}}^{2}(w_{2})-2^{2}|
-|d_{H_{min}^{b}}^{2}(w_{2})-1^{2}| \\
& \quad & +\sum_{i=1,i\neq2}^{x}(|(x-1)^{2}-k_{i}^{2}|-|x^{2}-k_{i}^{2}|)\\
& \leq &  |k_{2}^{2}-4|-x^{2}+k_{2}^{2}-3+(x-3)[(x-1)^{2}-x^{2}] \\
& = & [|k_{2}^{2}-4|+k_{2}^{2}-2x^{2}]-x^{2}+7x-6<0.
\end{eqnarray*}

\noindent {\bf Case 2}. $w_{1}= u_{1}$.
\begin{eqnarray*}
&  & 2SO_{1}(T^{*})-2SO_{1}(H_{min}^{b})\\
& = & |k_{2}^{2}-2^{2}|-|x^{2}-k_{2}^{2}|+|x^{2}-2^{2}|
-|x^{2}-1^{2}| \\
& \quad & +\sum_{i=3}^{x}(|(x-1)^{2}-k_{i}^{2}|-|x^{2}-k_{i}^{2}|)\\
& \leq &  |k_{2}^{2}-4|-x^{2}+k_{2}^{2}-3+(x-4)[(x-1)^{2}-x^{2}] \\
& = & [|k_{2}^{2}-4|+k_{2}^{2}-2x^{2}]-x^{2}+9x-7<0.
\end{eqnarray*}

This is a contradiction, thus $\Delta(H_{min}^{b})\leq 3$.
\end{proof}

By Handshaking Lemma and Lemma \ref{l6-1}, we have
\begin{theorem}\label{t6-2}
Let $1\leq b\leq \frac{n-2}{2}$. Then the graph $H_{min}^{b}$ has the degree sequence $\pi=\{\underbrace{3,3,\cdots,3}_{b},\underbrace{2,2,\cdots,2}_{n-2b-2},\underbrace{1,1,\cdots,1}_{b+2}\}$.
\end{theorem}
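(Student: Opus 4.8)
The plan is to combine the degree bound from Lemma~\ref{l6-1} with two counting identities that hold for any tree. We know $\Delta(H_{min}^{b})\leq 3$, so every vertex of $H_{min}^{b}$ has degree $1$, $2$, or $3$. Write $n_1$, $n_2$, $n_3$ for the number of vertices of each degree. Since $b$ is the branching number and branching vertices are exactly those of degree $\geq 3$, and the only available degree at least $3$ is now $3$, we get immediately $n_3=b$. It remains to show $n_2=n-2b-2$ and $n_1=b+2$, and these will fall out of two linear relations.

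First I would set up the vertex-count equation $n_1+n_2+n_3=n$. Second, I would apply the Handshaking Lemma to a tree: a tree on $n$ vertices has exactly $n-1$ edges, so $\sum_{v}d(v)=2(n-1)$, which with the degree restriction becomes $n_1+2n_2+3n_3=2n-2$. These are two equations in the three unknowns $n_1,n_2,n_3$; substituting the known value $n_3=b$ turns them into a determined $2\times2$ system. Solving it yields $n_1=b+2$ and $n_2=n-2b-2$, which is precisely the claimed degree sequence $\pi=\{\underbrace{3,\dots,3}_{b},\underbrace{2,\dots,2}_{n-2b-2},\underbrace{1,\dots,1}_{b+2}\}$.

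The one genuine point requiring care, rather than the routine algebra, is justifying $n_3=b$. A priori $b$ counts vertices with $d(v)\geq 3$, but Lemma~\ref{l6-1} forces $\Delta(H_{min}^{b})\leq 3$, so the set of vertices with $d(v)\geq 3$ coincides exactly with the set of vertices of degree $3$; hence the count of degree-$3$ vertices equals $b$. This is the step where the extremal hypothesis (minimality of $SO_1$) enters, via its consequence in Lemma~\ref{l6-1}; everything else is structural and holds for any tree with maximum degree $3$. I would also note in passing that the stated range $1\leq b\leq \frac{n-2}{2}$ is exactly the feasibility condition guaranteeing $n_2=n-2b-2\geq 0$, so the degree sequence is realizable and non-degenerate; this confirms consistency but is not needed for the derivation itself.

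Thus the proof is short: invoke Lemma~\ref{l6-1} to pin the maximum degree at $3$, identify $n_3=b$, and solve the vertex-count and handshake equations simultaneously. I do not anticipate any real obstacle beyond clearly articulating why the ``degree $\geq 3$'' count collapses to a ``degree $=3$'' count, which is the sole place the preceding lemma is used.
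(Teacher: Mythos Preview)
Your proposal is correct and is exactly the argument the paper has in mind: the paper simply states ``By Handshaking Lemma and Lemma~\ref{l6-1}'' without spelling out the details, and your write-up fills in precisely those details (using $\Delta\le 3$ to get $n_3=b$, then solving $n_1+n_2+n_3=n$ and $n_1+2n_2+3n_3=2(n-1)$). There is nothing to add.
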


Since $H_{min}^{b}$ has the degree sequence $\pi=\{\underbrace{3,3,\cdots,3}_{b},\underbrace{2,2,\cdots,2}_{n-2b-2},\underbrace{1,1,\cdots,1}_{b+2}\}$.
Then $m_{1,2}=b+2$, $m_{2,2}=n-3b-4$, $m_{2,3}=b+2$, $m_{3,3}=b-1$ for $b\leq \frac{n-3}{3}$.
$m_{1,2}=n-2b-2$, $m_{1,3}=3b-n+4$, $m_{2,3}=n-2b-2$, $m_{3,3}=b-1$ for $b> \frac{n-3}{3}$.
Thus we have $SO_{1}(H_{min}^{b})=4b+8$.

\begin{theorem}\label{t6-3}
Let $1\leq b\leq \frac{n-2}{2}$ and $T\in \mathcal{H}(n,b)$. Then $SO_{1}(T)\geq 4b+8$,
with equality if and only if $T$ has the degree sequence $\pi=\{\underbrace{3,3,\cdots,3}_{b},\underbrace{2,2,\cdots,2}_{n-2b-2},\underbrace{1,1,\cdots,1}_{b+2}\}$.
\end{theorem}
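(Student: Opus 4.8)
The plan is to prove Theorem~\ref{t6-3} by combining the structural result of Lemma~\ref{l6-1} with a direct edge-count computation, rather than by induction. The theorem asserts both a sharp lower bound $SO_1(T)\ge 4b+8$ and a characterization of equality via the degree sequence $\pi=\{3^{(b)},2^{(n-2b-2)},1^{(b+2)}\}$. Since the minimizer $H_{min}^b$ was \emph{defined} to be the tree of minimum $SO_1$ in $\mathcal{H}(n,b)$, the content of the theorem is exactly that $SO_1(H_{min}^b)=4b+8$ and that this value is attained precisely by trees with the stated degree sequence. The computation $SO_1(H_{min}^b)=4b+8$ has in fact already been carried out in the paragraph preceding the theorem, using Theorem~\ref{t6-2} to fix the degree sequence and then counting the edge types $m_{i,j}$. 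So the first step is simply to invoke Theorem~\ref{t6-2}, which guarantees $H_{min}^b$ has degree sequence $\pi$, together with that edge-count, to conclude $SO_1(T)\ge SO_1(H_{min}^b)=4b+8$ for every $T\in\mathcal{H}(n,b)$.

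For the equality characterization I would argue in two directions. First, the reverse direction: if $T$ has degree sequence $\pi$, I would show $SO_1(T)=4b+8$ regardless of how the vertices are arranged. The key observation is that $2SO_1(T)=\sum_{uv\in E}|d_u^2-d_v^2|$ depends only on the multiset of edge-endpoint degree pairs $\{(d_u,d_v)\}$, i.e.\ on the numbers $m_{1,2},m_{1,3},m_{2,2},m_{2,3},m_{3,3}$. For any tree with degree sequence $\pi$ (where all degrees are in $\{1,2,3\}$), the contribution comes only from edges joining vertices of different degrees, namely $|1-4|=3$ on a $(1,2)$-edge, $|1-9|=8$ on a $(1,3)$-edge, and $|4-9|=5$ on a $(2,3)$-edge, while $(2,2)$- and $(3,3)$-edges contribute $0$. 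Using the handshake/edge-incidence identities that determine the $m_{i,j}$ from $\pi$ in each of the two regimes $b\le\frac{n-3}{3}$ and $b>\frac{n-3}{3}$ (exactly the case split already written above the theorem), both regimes yield the same total $2SO_1=8b+16$, hence $SO_1=4b+8$. This makes the bound sharp and shows every tree with degree sequence $\pi$ is an extremal tree.

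The forward direction of the equality case is the part requiring genuine care and is where I expect the main obstacle. I must show that \emph{any} $T\in\mathcal{H}(n,b)$ attaining $SO_1(T)=4b+8$ necessarily has degree sequence $\pi$. The cleanest route is to observe that $H_{min}^b$, being a minimizer, already has degree sequence $\pi$ by Theorem~\ref{t6-2}; but since Theorem~\ref{t6-2} was deduced from Lemma~\ref{l6-1} plus the handshake lemma under the constraint of $b$ branching vertices, I would want to verify that $\pi$ is the \emph{unique} degree sequence compatible with minimality. Concretely, Lemma~\ref{l6-1} forces $\Delta\le 3$ at any minimizer, and then the constraints ``exactly $b$ vertices of degree $\ge 3$'' together with $\sum d_i=2(n-1)$ and $\sum_i n_i=n$ pin down $n_3=b$, $n_2=n-2b-2$, $n_1=b+2$ uniquely once $\Delta\le 3$. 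The delicate point is ruling out that a tree with a \emph{different} degree sequence (necessarily containing a vertex of degree $\ge 4$) could also reach the value $4b+8$: this is precisely what the strict inequalities in the two cases of Lemma~\ref{l6-1} provide, since they show the edge-lifting-type transformation strictly \emph{decreases} $SO_1$, so no tree with $\Delta\ge 4$ can be extremal. I would therefore assemble the equality case by chaining: $SO_1(T)=4b+8=SO_1(H_{min}^b)$ implies $T$ is also a minimizer, hence $\Delta(T)\le 3$ by Lemma~\ref{l6-1}, hence $T$ has degree sequence $\pi$ by the uniqueness just described, completing the proof.
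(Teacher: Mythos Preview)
Your overall strategy matches the paper's: invoke Lemma~\ref{l6-1} to force $\Delta\le 3$ at any minimizer, deduce the degree sequence $\pi$ via Theorem~\ref{t6-2}, and compute $SO_1(H_{min}^b)$ from the edge-type counts $m_{i,j}$. The inequality $SO_1(T)\ge 4b+8$ and the ``only if'' half of the equality case are handled correctly.

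The genuine gap is in your reverse (``if'') direction. You claim that for any tree with degree sequence $\pi$ the numbers $m_{i,j}$ are \emph{determined} by handshake/edge-incidence identities, but this is false: those identities give only three independent linear constraints on the five unknowns $m_{1,2},m_{1,3},m_{2,2},m_{2,3},m_{3,3}$. Eliminating via $m_{1,2}+m_{1,3}=b+2$ and $m_{1,3}+m_{2,3}+2m_{3,3}=3b$ one obtains
\[
2SO_1(T)=3m_{1,2}+8m_{1,3}+5m_{2,3}=18b+6-10\,m_{3,3},
\]
so among trees with degree sequence $\pi$ the value of $SO_1$ still varies with $m_{3,3}$, and equals $4b+8$ precisely when $m_{3,3}=b-1$, i.e.\ when the $b$ branching vertices induce a connected subtree. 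For a concrete counterexample take $n=8$, $b=2$ and the caterpillar on $v_1v_2\cdots v_6$ with extra pendants at $v_2$ and $v_5$: it has degree sequence $(3,3,2,2,1,1,1,1)=\pi$ but $m_{3,3}=0$, giving $SO_1=21>16=4b+8$. Hence ``degree sequence $\pi$'' is necessary but not sufficient for equality. The $m_{i,j}$ values displayed just above the theorem tacitly assume $m_{3,3}=b-1$, and the equality clause as written is in fact slightly too weak; your argument cannot be repaired to prove it as stated, because the statement itself is missing this extra structural condition.
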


By Theorem \ref{t6-3}, we have the following corollary.

\begin{corollary}\label{c6-4}
$12=SO_{1}(H_{min}^{1})<SO_{1}(H_{min}^{2})<\cdots<SO_{1}(H_{min}^{\lfloor \frac{n-2}{2}\rfloor})$.
\end{corollary}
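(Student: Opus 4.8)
The plan is to read off the exact value of $SO_1(H_{min}^b)$ from Theorem~\ref{t6-3} and then observe that this value is strictly increasing in $b$. By definition $H_{min}^b$ is a tree of minimum $SO_1$ in $\mathcal{H}(n,b)$, and Theorem~\ref{t6-3} asserts that this minimum equals $4b+8$, attained precisely by the trees with degree sequence $\pi=\{\underbrace{3,\cdots,3}_{b},\underbrace{2,\cdots,2}_{n-2b-2},\underbrace{1,\cdots,1}_{b+2}\}$. Hence for every admissible $b$ with $1\le b\le \lfloor\frac{n-2}{2}\rfloor$ we have the closed form $SO_1(H_{min}^b)=4b+8$.

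With this closed form in hand the corollary is immediate. First I would note that setting $b=1$ gives $SO_1(H_{min}^1)=4\cdot 1+8=12$, which pins down the leftmost term. Next, for any two consecutive admissible values $b$ and $b+1$ I would compute the gap $SO_1(H_{min}^{b+1})-SO_1(H_{min}^b)=\bigl(4(b+1)+8\bigr)-(4b+8)=4>0$, so the sequence increases by a constant step of $4$ as $b$ runs from $1$ up to $\lfloor\frac{n-2}{2}\rfloor$. Chaining these strict inequalities yields the displayed chain.

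The only point that needs care is that $n$ is held fixed while $b$ varies: the classes $\mathcal{H}(n,b)$ for different $b$ all share the same vertex count, and for each such $b$ the hypothesis $1\le b\le \frac{n-2}{2}$ of Theorem~\ref{t6-3} is satisfied by construction, so the closed form $4b+8$ applies uniformly across the whole range. There is no genuine obstacle here; the substance of the argument was already carried out in proving Theorem~\ref{t6-3}, namely determining the minimizing degree sequence via Lemma~\ref{l6-1} and Theorem~\ref{t6-2} and then evaluating $SO_1$ on it. Once the minimum value is recognized as the affine function $4b+8$ of $b$, strict monotonicity is a one-line consequence.
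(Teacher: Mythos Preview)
Your proposal is correct and follows exactly the paper's approach: the paper simply states that Corollary~\ref{c6-4} follows from Theorem~\ref{t6-3}, which gives $SO_{1}(H_{min}^{b})=4b+8$, and the strict monotonicity in $b$ (with value $12$ at $b=1$) is then immediate. Your write-up makes explicit the one-line verification that the paper leaves to the reader, but there is no difference in substance.
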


In the following, we consider the properties of $H_{max}^{b}$.

\begin{lemma}\label{l6-5}
Let $1\leq b\leq \frac{n-2}{2}$. Then $n_{2}(H_{max}^{b})=0$.
\end{lemma}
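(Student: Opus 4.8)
The plan is to argue by contradiction and reduce everything to a single application of the edge-lifting transformation of Lemma \ref{l1-1}, which already supplies a \emph{strict} increase of $SO_{1}$. The real work is therefore not an $SO_{1}$ computation at all, but the combinatorial task of choosing one edge-lifting that also leaves the branching number $b$ unchanged. So I would suppose, for contradiction, that $n_{2}(H_{max}^{b})\geq 1$, and first locate a degree-$2$ vertex in a suitable position.

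Since $b\geq 1$, the tree $H_{max}^{b}$ is not a path, so it has a vertex of degree $\geq 3$. I would take a maximal path $c_{1}-w_{1}-\cdots-w_{s}-c_{2}$ whose internal vertices $w_{1},\dots,w_{s}$ all have degree $2$ and whose endpoints $c_{1},c_{2}$ have degree $\neq 2$ (this path is nonempty because $n_{2}\geq 1$). If both $c_{1}$ and $c_{2}$ were leaves, this path would be a whole connected component and hence all of $H_{max}^{b}$, forcing $b=0$, a contradiction; so at least one endpoint, say $c_{1}$, is a branching vertex. I set $u:=w_{1}$ (degree $2$) and $v:=c_{1}$ (degree $\geq 3$). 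Then $u$ is a degree-$2$ vertex adjacent to a branching vertex, and the edge $uv$ is not a pendent edge, so Lemma \ref{l1-1} applies to it.

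Next I apply Lemma \ref{l1-1} to $uv$ with $u$ in the role of the lemma's vertex whose neighbours are transferred: writing $N_{H_{max}^{b}}(u)=\{v,u'\}$, the transformation produces $G^{*}=H_{max}^{b}-uu'+vu'$, with $SO_{1}(H_{max}^{b})<SO_{1}(G^{*})$. I then verify $G^{*}\in\mathcal{H}(n,b)$. It is a tree on the same $n$ vertices (one edge deleted, one added, and no cycle is created because in a tree the only $v$--$u'$ path runs through $u$, which is severed). For the branching number I track the three affected vertices: $u$ drops from degree $2$ to degree $1$ (non-branching before and after), $v$ rises from $d_{v}\geq 3$ to $d_{v}+1\geq 4$ (branching before and after), and $u'$ keeps its degree. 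Hence the set of branching vertices is unchanged, so $b(G^{*})=b$.

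Combining these facts, $G^{*}\in\mathcal{H}(n,b)$ with $SO_{1}(G^{*})>SO_{1}(H_{max}^{b})$, contradicting the maximality of $H_{max}^{b}$; therefore $n_{2}(H_{max}^{b})=0$. The only genuinely delicate point is the second step: one must pick the degree-$2$ vertex so that the vertex absorbing the transferred edge is \emph{already} branching, since otherwise edge-lifting could turn a degree-$2$ neighbour into a degree-$3$ one and thereby raise $b$, pushing the graph out of $\mathcal{H}(n,b)$. The maximal-degree-$2$-path argument is exactly what guarantees such a vertex exists whenever $n_{2}\geq 1$ and $b\geq 1$, and once it is in hand the rest is immediate from Lemma \ref{l1-1}.
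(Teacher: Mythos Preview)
Your proof is correct and in fact tidier than the paper's. The paper also argues by contradiction, but it fixes an \emph{arbitrary} degree-$2$ vertex $u$ together with a maximum-degree vertex $w$ and then splits into two cases according to whether $w\in N(u)$. When $w$ is a neighbour of $u$, the paper applies Lemma~\ref{l1-1} exactly as you do; when $w\notin N(u)$, it performs the different transformation $T^{*}=H_{max}^{b}-uv_{2}+wv_{2}$ (moving one edge of $u$ over to the far-away vertex $w$) and then verifies $SO_{1}(T^{*})>SO_{1}(H_{max}^{b})$ by an explicit computation tracking the degree changes at $u$, $v_{1}$, $v_{2}$, $w$ and all $w_{i}\in N(w)$. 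Your maximal-degree-$2$-path argument guarantees in advance that some degree-$2$ vertex is adjacent to a branching vertex, which collapses both cases into a single clean application of Lemma~\ref{l1-1} and eliminates the need for any direct $SO_{1}$ calculation. The trade-off is that you invest a short combinatorial existence argument up front, whereas the paper skips that but pays for it with the Case~2 inequality chain; your route is the more economical of the two.
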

\begin{proof}
Suppose to the contrary that $n_{2}(H_{max}^{b})\geq 1$, then there exists $u\in V(H_{max}^{b})$
and $d_{H_{max}^{b}}(u)=2$. Let $N_{H_{max}^{b}}(u)=\{v_{1},v_{2}\}$.
Let $w\in V(H_{max}^{b})$
and $d_{H_{max}^{b}}(w)=\Delta(H_{max}^{b})$ and $N_{H_{max}^{b}}(w)=\{w_{1},w_{2},\cdots,w_{\Delta}\}$.

If $w\in N_{H_{max}^{b}}(u)$ , let $T^{*}=N_{H_{max}^{b}}-uv_{2}+v_{1}v_{2}$, then $T^{*}\in \mathcal{H}(n,b)$.
By Edge-lifting transformation of Lemma \ref{l1-1}, we have $SO_{1}(T^{*})>SO_{1}(H_{max}^{b})$, which is a contradiction.

If $w\notin N_{H_{max}^{b}}(u)$, then there exists a path contain $u$, $w$ in $H_{max}^{b}$. Suppose the path go through $v_{1}$, then $d_{H_{max}^{b}}(v_{1})\geq 2$. Let $T^{*}=N_{H_{max}^{b}}-uv_{2}+wv_{2}$, then $T^{*}\in \mathcal{H}(n,b)$.
\begin{eqnarray*}
&  & 2SO_{1}(T^{*})-2SO_{1}(H_{min}^{b})\\
& = & |d_{T^{*}}^{2}(w)-d_{T^{*}}^{2}(v_{2})|+|d_{T^{*}}^{2}(v_{1})-d_{T^{*}}^{2}(u)|
-|d_{H_{max}^{b}}^{2}(v_{1})-d_{H_{max}^{b}}^{2}(u)|
-|d_{H_{max}^{b}}^{2}(v_{2})-d_{H_{max}^{b}}^{2}(u)| \\
& \quad & +\sum_{i=1}^{\Delta}[|(d_{H_{max}^{b}}(w)+1)^{2}-d_{H_{max}^{b}}^{2}(w_{i})|-
|d_{H_{max}^{b}}^{2}(w)-d_{H_{max}^{b}}^{2}(w_{i})|]\\
& = &  (\Delta+1)^{2}-d_{H_{max}^{b}}^{2}(v_{2})+\Delta(2\Delta+1)+d_{H_{max}^{b}}^{2}(v_{1})
-1-d_{H_{max}^{b}}^{2}(v_{1})+4-|d_{H_{max}^{b}}^{2}(v_{2})-4| \\
& = & (\Delta+1)^{2}+\Delta(2\Delta+1)+3-d_{H_{max}^{b}}^{2}(v_{2})
-|d_{H_{max}^{b}}^{2}(v_{2})-4|>0.
\end{eqnarray*}

Thus $n_{2}(H_{max}^{b})=0$.
\end{proof}

\begin{lemma}\label{l6-6}
Let $1\leq b\leq \frac{n-2}{2}$ and $i\geq 4$. Then $n_{i}(H_{max}^{b})\leq 1$.
\end{lemma}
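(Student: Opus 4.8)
The plan is to argue by contradiction with a single branch-transfer operation, in the same spirit as the edge-lifting transformation of Lemma \ref{l1-1} and the degree-reduction of Lemma \ref{l6-1}. Suppose $n_{i}(H_{max}^{b})\geq 2$ for some $i\geq 4$, and let $u,v$ be two distinct vertices of $H:=H_{max}^{b}$ with $d_{H}(u)=d_{H}(v)=i$. Since $d_{H}(v)=i\geq 4$, the vertex $v$ has at least $i-1\geq 3$ neighbours off the unique $u$--$v$ path, so we may choose a neighbour $w$ of $v$ not lying on that path and set $T^{*}=H-vw+uw$, detaching the subtree hanging at $w$ from $v$ and reattaching it to $u$. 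Then $T^{*}$ is again a tree on $n$ vertices with $d_{T^{*}}(u)=i+1\geq 5$, $d_{T^{*}}(v)=i-1\geq 3$, and all other degrees unchanged; hence both $u$ and $v$ remain branching vertices and $T^{*}\in\mathcal{H}(n,b)$. It then suffices to prove $SO_{1}(T^{*})>SO_{1}(H)$, contradicting the maximality of $H_{max}^{b}$.

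I would then record the change explicitly. Only edges incident to $u$ or $v$ are affected, so with $A=N_{H}(u)$ and $B=N_{H}(v)\setminus\{w\}$ (assuming first $uv\notin E(H)$),
\begin{align*}
2SO_{1}(T^{*})-2SO_{1}(H)=&\sum_{x\in A}\big(|(i+1)^{2}-d_{H}^{2}(x)|-|i^{2}-d_{H}^{2}(x)|\big)\\
&+\sum_{y\in B}\big(|(i-1)^{2}-d_{H}^{2}(y)|-|i^{2}-d_{H}^{2}(y)|\big)\\
&+\big(|(i+1)^{2}-d_{H}^{2}(w)|-|i^{2}-d_{H}^{2}(w)|\big).
\end{align*}
A neighbour $x$ of $u$ with $d_{H}(x)\leq i$ contributes $+(2i+1)$ to the first sum (and $-(2i+1)$ if $d_{H}(x)\geq i+1$); a neighbour $y$ of $v$ with $d_{H}(y)\geq i$ contributes $+(2i-1)$ to the second (and $-(2i-1)$ if $d_{H}(y)\leq i-1$); and $w$ contributes $+(2i+1)$ to the last term when $d_{H}(w)\leq i$.

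The main obstacle is that the overall sign of this difference depends on the degrees of the neighbours of $u$ and $v$, so the bare transfer need not help for an arbitrary choice of the two vertices. To force positivity I would use the freedom in deciding which of the two degree-$i$ vertices increases its degree and which decreases it, together with the choice of $w$, guided by a longest-path argument as in Lemma \ref{l6-1}. Concretely, take $P$ a longest path of $H$ and designate as $v$ the degree-$i$ vertex lying closest to an endpoint of $P$, so that its off-path branches are short and a pendent neighbour $w$ of $v$ can be chosen; then $d_{H}(w)=1\leq i$ makes the moved-edge term equal to $+(2i+1)$, and since $v$ retains $i-1\geq 3$ neighbours the second sum is controlled from below. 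Using $n_{2}(H)=0$ from Lemma \ref{l6-5} to exclude degree-$2$ neighbours, one bounds the finitely many possible negative contributions and obtains a strictly positive total.

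Finally, I would dispatch the degenerate configurations by a short case analysis mirroring the $w_{1}=u_{1}$ versus $w_{1}\neq u_{1}$ split of Lemma \ref{l6-1}. The case $uv\in E(H)$ is favourable, since the edge $uv$ then changes its contribution from $0$ to $(i+1)^{2}-(i-1)^{2}=4i>0$, which only increases the difference; and the case where the chosen $w$ is forced to be the on-path neighbour is excluded by the availability of $i-1\geq 3$ off-path neighbours. In every case the estimate above yields $SO_{1}(T^{*})>SO_{1}(H)$, contradicting maximality and establishing $n_{i}(H_{max}^{b})\leq 1$ for all $i\geq 4$.
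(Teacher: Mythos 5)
Your overall strategy (a branch-transfer followed by a contradiction with maximality) is in the same spirit as the paper's, but the specific move you choose does not work, and the place where you wave your hands is exactly where the proof lives. Transferring a \emph{single} neighbour $w$ from $v$ to $u$, where $u$ and $v$ are merely two vertices of the same degree $i\geq 4$, changes the contribution of each edge $ux$ by $+(2i+1)$ if $d_{H}(x)\leq i$ but by $-(2i+1)$ if $d_{H}(x)\geq i+1$. Since $i$ need not be the maximum degree, $u$ may have many neighbours of degree exceeding $i$, and then the first sum in your display is large and negative. Your proposed remedies do not close this: choosing which vertex receives the branch does not help when \emph{both} $u$ and $v$ have all their neighbours of degree $>i$ (take $i=4$ and give $u$ and $v$ four neighbours each of degree $5$, all other vertices pendent; this lies in some $\mathcal{H}(n,b)$, has $n_{2}=0$, and a direct computation gives $2SO_{1}(T^{*})-2SO_{1}(H)=-6i<0$ for the transfer in either direction). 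Likewise, the degree-$i$ vertex closest to an endpoint of a longest path need not have any pendent neighbour, so the claim that $w$ can be taken pendent is unjustified, and ``one bounds the finitely many possible negative contributions'' is asserted rather than proved. Such a configuration cannot of course be $H_{max}^{b}$, but your argument is obliged to derive a contradiction from the mere existence of two degree-$i$ vertices, and for this configuration it does not.

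The paper avoids the difficulty by two choices you do not make: it takes the \emph{receiving} vertex $u$ to be a vertex of maximum degree $\Delta$, so that every neighbour of $u$ has degree at most $\Delta$ and every term of the first sum is automatically positive; and it moves $t-3$ branches at once, reducing $v$ to degree exactly $3$ (so membership in $\mathcal{H}(n,b)$ is preserved) while producing a dominant positive term of order $\Delta\bigl[(\Delta+t-3)^{2}-\Delta^{2}\bigr]$ that absorbs the bounded negative contributions from the three neighbours $v$ retains. (Note also that the paper's argument, by pitting the maximum-degree vertex against an arbitrary second vertex of degree at least $4$, actually establishes the stronger fact that $H_{max}^{b}$ has at most one vertex of degree $\geq 4$ in total, which is what Theorem \ref{t6-7} really uses; your formulation, even if repaired, would only give $n_{i}\leq 1$ separately for each $i$.) To repair your proof you would need, at minimum, to designate the maximum-degree vertex as the receiver and to quantify the trade-off between its gain and the losses at the donor's remaining neighbours; at that point you have essentially reconstructed the paper's argument.
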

\begin{proof}
Suppose that there exists vertices $u,v\in V(H_{max}^{b})$ such that $d_{H_{max}^{b}}(u)=\Delta(H_{max}^{b})\geq 4$ and $d_{H_{max}^{b}}(u)\geq 4$.
Let $d_{H_{max}^{b}}(u)=t$ and $N_{H_{max}^{b}}(v)=\{v_{1},v_{2},\cdots,v_{t}\}$.

If $u\notin N_{H_{max}^{b}}(v)$, then let $u=v_{t}$. Let $T^{*}=H_{max}^{b}-\{vv_{1},vv_{2},\cdots,vv_{t-3}\}+\{uv_{1},uv_{2},\cdots,uv_{t-3}\}$, then $T^{*}\in \mathcal{H}(n,b)$.
\begin{eqnarray*}
&  & 2SO_{1}(T^{*})-2SO_{1}(H_{max}^{b})\\
& = & \sum_{i=1}^{t-3}[|d_{T^{*}}^{2}(u)-d_{T^{*}}^{2}(v_{i})|
-|d_{H_{max}^{b}}^{2}(v)-d_{H_{max}^{b}}^{2}(v_{i})|]
+|(\Delta+t-3)^{2}-(d_{H_{max}^{b}}(v_{i})-(t-3))^{2}|\\
& \quad & -|\Delta^{2}-d_{H_{max}^{b}}^{2}(v)|+(\Delta-1)[(\Delta+t-3)^{2}-\Delta^{2}]\\
& \quad & +\sum_{i=t-2}^{t-1}[|(d_{H_{max}^{b}}-(t-3))^{2}-d_{H_{max}^{b}}^{2}(v_{i})|
-|d_{H_{max}^{b}}^{2}(v)-d_{H_{max}^{b}}^{2}(v_{i})|]\\
& \geq &  -(t-3)[(\Delta+t-3)^{2}-t^{2}]+2(t-3)(\Delta+t)+(\Delta-1)[2\Delta(t-3)+(t-3)^{2}] \\
& \quad & -2(t-3)(2t-(t-3))\\
& = &  (t-3)(-t\Delta+5t+\Delta^{2}+3\Delta-12)>0.
\end{eqnarray*}

If $u\in N_{H_{max}^{b}}(v)$. Let $T^{*}=H_{max}^{b}-\{vv_{1},vv_{2},\cdots,vv_{t-3}\}+\{uv_{1},uv_{2},\cdots,uv_{t-3}\}$, then $T^{*}\in \mathcal{H}(n,b)$.
\begin{eqnarray*}
&  & 2SO_{1}(T^{*})-2SO_{1}(H_{max}^{b})\\
& = & \sum_{i=1}^{t-3}[|d_{T^{*}}^{2}(u)-d_{T^{*}}^{2}(v_{i})|
-|d_{H_{max}^{b}}^{2}(v)-d_{H_{max}^{b}}^{2}(v_{i})|] +\Delta[(\Delta+t-3)^{2}-\Delta^{2}]\\
& \quad & +\sum_{i=t-2}^{t}[|(d_{H_{max}^{b}}-(t-3))^{2}-d_{H_{max}^{b}}^{2}(v_{i})|
-|d_{H_{max}^{b}}^{2}(v)-d_{H_{max}^{b}}^{2}(v_{i})|]\\
& \geq &  -(t-3)[(\Delta+t-3)^{2}-t^{2}]+\Delta[2\Delta(t-3)+(t-3)^{2}] -3(t-3)(t+3)\\
& = &  (t-3)(-t\Delta+3t+\Delta^{2}+3\Delta-18)>0.
\end{eqnarray*}

This is a contradiction. Thus $n_{i}(H_{max}^{b})\leq 1$.
\end{proof}

\begin{theorem}\label{t6-7}
Let $1\leq b\leq \frac{n-2}{2}$. Then the graph $H_{max}^{b}$ has the degree sequence $\pi=\{n-2b+1,\underbrace{3,3,\cdots,3}_{b-1},\underbrace{1,1,\cdots,1}_{n-b}\}$, and further
$SO_{1}(H_{max}^{b})=\frac{1}{2}[(n-2b+1)^{3}-9(n-2b+1)+8(n-b)]$.
\end{theorem}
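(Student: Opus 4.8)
The plan is to combine the two structural lemmas just proved with a degree-sequence count, and then to show that the resulting degree sequence already determines the value of $SO_{1}$. First I would invoke Lemma~\ref{l6-5} to record that $n_{2}(H_{max}^{b})=0$, and Lemma~\ref{l6-6} (whose proof in fact rules out two vertices of degree $\geq 4$ existing simultaneously) to record that $H_{max}^{b}$ has at most one vertex of degree exceeding $3$. Consequently every degree of $H_{max}^{b}$ lies in $\{1,3\}$ except possibly a single vertex $v^{*}$ of degree $\Delta\geq 4$.

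Next I would count. Writing $t$ for the number of $3$-vertices and $s\in\{0,1\}$ for the number of vertices of degree $\geq 4$, the branching condition gives $s+t=b$, and the vertex count gives $n-(s+t)=n-b$ leaves. Applying the handshaking identity $\sum_{v}d(v)=2(n-1)$ then forces, in the case $s=1$, that $t=b-1$ and $\Delta=n-2b+1$; and in the case $s=0$ it forces $b=\tfrac{n-2}{2}$, for which $n-2b+1=3$, so the ``special'' vertex simply has degree $3$. In either case the degree sequence is exactly $\pi=\{n-2b+1,\,3^{(b-1)},\,1^{(n-b)}\}$ as claimed. One should also check that $\Delta=n-2b+1\geq 4$ whenever $b<\tfrac{n-2}{2}$, which follows from the hypothesis $1\leq b\leq\tfrac{n-2}{2}$.

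For the value of $SO_{1}$ I would argue that it depends only on $\pi$. Since $n_{2}=0$, every leaf is adjacent either to $v^{*}$ or to a $3$-vertex, and since there is a unique high-degree vertex the only edge types are $v^{*}$--leaf (weight $\Delta^{2}-1$), $v^{*}$--$3$ (weight $\Delta^{2}-9$), $3$--$3$ (weight $0$), and $3$--leaf (weight $8$). Letting $x$ be the number of leaves adjacent to $v^{*}$ and $y$ the number of $3$-vertices adjacent to $v^{*}$, so that $x+y=\Delta$, the number of $3$--leaf edges is $(n-b)-x$; substituting into $2SO_{1}=x(\Delta^{2}-1)+y(\Delta^{2}-9)+8\big((n-b)-x\big)$ and using $y=\Delta-x$, the terms linear in $x$ cancel and leave $2SO_{1}=\Delta^{3}-9\Delta+8(n-b)$. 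With $\Delta=n-2b+1$ this is precisely $SO_{1}(H_{max}^{b})=\tfrac{1}{2}[(n-2b+1)^{3}-9(n-2b+1)+8(n-b)]$.

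The main obstacle is this last step: a priori $SO_{1}$ is not a function of the degree sequence alone, and the closed form is only meaningful because the parameters $x,y$ that govern how the leaves and $3$-vertices are distributed around $v^{*}$ drop out of the edge-weight sum. Verifying this cancellation --- together with the boundary case $b=\tfrac{n-2}{2}$, where $\Delta=3$ so the $v^{*}$--$3$ weight $\Delta^{2}-9$ vanishes and the special vertex merges into the block of $3$-vertices --- is where the real content lies; by contrast the degree-sequence determination is a routine consequence of Lemmas~\ref{l6-5} and~\ref{l6-6} combined with handshaking.
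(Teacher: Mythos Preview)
Your proposal is correct and follows essentially the same route as the paper: invoke Lemmas~\ref{l6-5} and~\ref{l6-6} together with handshaking to pin down the degree sequence, then compute $SO_{1}$ from the resulting edge types. In fact your argument is more complete than the paper's, which simply asserts the formula ``by the structure of tree and the properties of $SO_{1}$'' and writes $2SO_{1}=\Delta(\Delta^{2}-1)+8[(n-b)-\Delta]$ without exhibiting the $x$-cancellation or treating the boundary case $b=\tfrac{n-2}{2}$ explicitly; you also correctly observe that the \emph{proof} of Lemma~\ref{l6-6} (rather than its stated conclusion $n_{i}\le 1$ for each $i\ge 4$) is what guarantees at most one vertex of degree $\ge 4$.
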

\begin{proof}
Since $n_{1}=2(n-1)-(n-2b+1)-3(b-1)=n-b$ and $\Delta\leq n-2b+1$, then $H_{max}^{b}$ has the degree sequence $\pi=\{n-2b+1,\underbrace{3,3,\cdots,3}_{b-1},\underbrace{1,1,\cdots,1}_{n-b}\}$

By the structure of tree and the properties of $SO_{1}$, we have
$2SO_{1}(H_{max}^{b})=(n-2b+1)[(n-2b+1)^{2}-1]+[n-b-(n-2b+1)](3^{2}-1^{2})
=(n-2b+1)^{3}-9(n-2b+1)+8(n-b)$.
\end{proof}

\begin{theorem}\label{t6-8}
Let $1\leq b\leq \frac{n-2}{2}$ and $T\in \mathcal{H}(n,b)$. Then $SO_{1}(T)\leq \frac{1}{2}[(n-2b+1)^{3}-9(n-2b+1)+8(n-b)]$,
with equality if and only if $T$ has the degree sequence $\pi=\{n-2b+1,\underbrace{3,3,\cdots,3}_{b-1},\underbrace{1,1,\cdots,1}_{n-b}\}$
\end{theorem}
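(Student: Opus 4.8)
The plan is to deduce Theorem \ref{t6-8} directly from Theorem \ref{t6-7} together with one short edge-type count. Since $H_{max}^{b}$ is by definition the tree in $\mathcal{H}(n,b)$ of largest first Sombor index, every $T\in\mathcal{H}(n,b)$ satisfies $SO_{1}(T)\le SO_{1}(H_{max}^{b})$, and Theorem \ref{t6-7} already evaluates the right-hand side as $\frac12[(n-2b+1)^{3}-9(n-2b+1)+8(n-b)]$. This settles the inequality at once, so the only remaining work is the equality characterization, which splits into the two implications.

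For the ``only if'' direction I would argue that any tree attaining the maximum must carry the degree sequence $\pi$. This is exactly what the preceding lemmas supply: Lemma \ref{l6-5} forbids $2$-vertices in a maximal tree and Lemma \ref{l6-6} allows at most one vertex of degree $\ge 4$; feeding these constraints into the Handshaking Lemma (as already done in the proof of Theorem \ref{t6-7}) forces a single high-degree vertex of degree $\Delta=n-2b+1$, exactly $b-1$ vertices of degree $3$, and $n-b$ pendant vertices, i.e.\ the degree sequence $\pi$.

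The ``if'' direction carries the real content, and the key point is that $SO_{1}$ is \emph{constant} across the entire class of trees with degree sequence $\pi$, so that every such tree is automatically extremal. I would prove this by counting edge types. Writing $c$ for the unique vertex of degree $\Delta=n-2b+1$ and $a=m_{\Delta,3}$ for the number of its edges that land on degree-$3$ vertices, endpoint-counting at the degree-$3$ vertices and at the pendants, combined with $|E(T)|=n-1$, pins down $m_{\Delta,1}=\Delta-a$, $m_{3,1}=b-1+a$, and $m_{3,3}=b-1-a$. Since $\Delta\ge 3$ gives $|\Delta^{2}-9|=\Delta^{2}-9$, one obtains $2SO_{1}(T)=a(\Delta^{2}-9)+(\Delta-a)(\Delta^{2}-1)+8(b-1+a)$, in which the coefficient of $a$ is $(\Delta^{2}-9)-(\Delta^{2}-1)+8=0$. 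Hence $2SO_{1}(T)=\Delta^{3}-\Delta+8(b-1)$ independently of $a$, and one checks this equals $(n-2b+1)^{3}-9(n-2b+1)+8(n-b)$.

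The main obstacle is precisely this cancellation: a priori $SO_{1}$ depends on how the high-degree vertex, the branching vertices, and the pendants are wired together (that is, on $a$), so one must verify that the three free edge-types contribute in exactly balanced proportions. Once the coefficient of $a$ is seen to vanish --- a consequence of $\Delta\ge 3$ making the $(\Delta,3)$-contribution lose its absolute value --- both the common value and the equality characterization follow. The degenerate case $\Delta=3$ (when $b=\frac{n-2}{2}$) merely merges the first two degree classes and is covered by the same formula.
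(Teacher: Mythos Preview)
Your proposal is correct and follows the paper's intended route: the inequality is immediate from Theorem \ref{t6-7} and the definition of $H_{max}^{b}$, the ``only if'' direction of the equality case comes from Lemmas \ref{l6-5}--\ref{l6-6} plus the Handshaking Lemma, and the ``if'' direction comes from the invariance of $SO_{1}$ over all trees with degree sequence $\pi$. The paper states Theorem \ref{t6-8} without a separate proof and, in the proof of Theorem \ref{t6-7}, simply asserts the value of $2SO_{1}(H_{max}^{b})$ ``by the structure of tree and the properties of $SO_{1}$''; your explicit edge-type count showing that the coefficient of $a=m_{\Delta,3}$ vanishes is precisely the justification the paper leaves implicit.
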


By Theorem \ref{t6-8}, we have the following corollary.

\begin{corollary}\label{c6-9}
$SO_{1}(H_{max}^{1})>SO_{1}(H_{max}^{2})>\cdots>SO_{1}(H_{max}^{\lfloor \frac{n-2}{2}\rfloor})$.
\end{corollary}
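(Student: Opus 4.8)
The plan is to read off the closed-form expression $SO_1(H_{max}^b)=\frac{1}{2}[(n-2b+1)^3-9(n-2b+1)+8(n-b)]$ supplied by Theorem~\ref{t6-7} and reduce the whole chain of strict inequalities to a single-variable monotonicity statement. Since the assertion is a telescoping comparison of consecutive terms, it suffices to establish $SO_1(H_{max}^{b})>SO_1(H_{max}^{b+1})$ for every $b$ with $1\le b\le \lfloor \frac{n-2}{2}\rfloor-1$.

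First I would substitute $x=n-2b+1$, observing that increasing $b$ by one sends $x\mapsto x-2$ and $n-b\mapsto (n-b)-1$. Writing $g(b)=(n-2b+1)^3-9(n-2b+1)+8(n-b)=x^3-9x+8(n-b)$ and using the expansion $x^3-(x-2)^3=6x^2-12x+8$, a direct computation gives
\begin{align*}
2\big[SO_1(H_{max}^{b})-SO_1(H_{max}^{b+1})\big]=g(b)-g(b+1)=\big(6x^2-12x+8\big)-18+8=6x^2-12x-2,
\end{align*}
so that $SO_1(H_{max}^{b})-SO_1(H_{max}^{b+1})=3x^2-6x-1$.

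Next I would control the sign of this quadratic over the admissible range. The constraint $b\le \lfloor \frac{n-2}{2}\rfloor$ forces $2b\le n-2$, hence $x=n-2b+1\ge 3$ for every admissible index $b$ (and in particular for the smaller index appearing in each comparison). On $[3,\infty)$ the polynomial $3x^2-6x-1$ is increasing and already equals $8$ at $x=3$, so $3x^2-6x-1>0$ throughout. This yields the strict inequality $SO_1(H_{max}^{b})>SO_1(H_{max}^{b+1})$ for each consecutive pair and thus the full chain.

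I do not expect a genuine obstacle here: once Theorem~\ref{t6-7} is available, the statement is an elementary monotonicity of a cubic in $b$. The only point requiring care is the lower bound on $x$, because $3x^2-6x-1$ does change sign (its positive root is near $x\approx 2.15$); one must verify that the admissibility constraint $b\le \lfloor \frac{n-2}{2}\rfloor$ keeps $x$ at or above $3$, so that positivity holds without exception across the entire range of~$b$.
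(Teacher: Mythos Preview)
Your proposal is correct and follows the same route the paper has in mind: the paper states Corollary~\ref{c6-9} as an immediate consequence of the explicit value in Theorem~\ref{t6-7}/\ref{t6-8} with no further argument, and you simply make the implied monotonicity check explicit by computing the consecutive difference $3x^2-6x-1$ with $x=n-2b+1\ge 3$. Your arithmetic and your handling of the range of $x$ are both fine.
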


\section{Trees and connected graphs with given maximum degree}
\hskip 0.6cm
Recall that $\Delta$ (resp. $\Delta_{2}$) is the maximum (resp. second maximum degree) in graph $G$.
A vertex $v$ with $d(v)\geq 3$ is called a branching vertex.
If a tree has only one branching vertex, then we call it a starlike tree.
Then we call the branching vertex as the center of this starlike tree.
Let $\mathcal{ST}_{n,\Delta}$ be the set of starlike trees with $n$ vertices and maximum degree $\Delta$.
Let $\mathcal{DST}_{n,\Delta,\Delta_{2}}$ be the $n$-verices graphs obtained from two starlike trees with maximum degree $\Delta-1$ and $\Delta_{2}-1$ by adding one edge between their centers.

We denote a path $v_{1}v_{2}\cdots v_{t}$ by $(v_{1},v_{t})$.
Let $\phi_{G}(v_{1},v_{t})=\sum\limits_{i=1}^{t}|d_{G}^{2}(v_{i})-d_{G}^{2}(v_{i+1})|$.
Then it is obvious that $\phi_{G}(v_{1},v_{t})\geq |d_{G}^{2}(v_{1})-d_{G}^{2}(v_{t})|$, with equality if and only if $d(v_{1})\geq d(v_{2})\geq \cdots \geq d(v_{t})$ or $d(v_{1})\leq d(v_{2})\leq \cdots \leq d(v_{t})$.

Let $\mathcal{T}_{n,\Delta,\Delta_{2}}$ be the set of trees with $n$ vertices, maximum degree $\Delta$ and second maximum degree $\Delta_{2}$.

\begin{theorem}\label{t7-1}
Let $\Delta \geq 3$ and $T\in \mathcal{T}_{n,\Delta,\Delta_{2}}$. Then $SO_{1}(T)\geq
\frac{1}{2}[(\Delta-1)(\Delta^{2}-1)+(\Delta_{2}-1)(\Delta_{2}^{2}-1)+\Delta^{2}-\Delta_{2}^{2}]$
, with equality if and only if $T\in \mathcal{DST}_{n,\Delta,\Delta_{2}}$.
\end{theorem}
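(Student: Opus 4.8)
The plan is to work with the identity $2SO_1(T)=\sum_{xy\in E(T)}\phi(x,y)$ together with the path inequality $\phi_T(v_1,v_t)\ge |d_T^2(v_1)-d_T^2(v_t)|$ recorded just before the statement, and to produce the lower bound by decomposing $E(T)$ around a maximum-degree vertex and a second-maximum-degree vertex. First I would fix a vertex $u$ with $d_T(u)=\Delta$ and a vertex $w\ne u$ with $d_T(w)=\Delta_2$ (two distinct vertices of degree $\Delta$ if $\Delta=\Delta_2$), and let $P$ be the unique $u$--$w$ path. Deleting the interior of $P$ partitions the remaining edges into the $\Delta-1$ subtrees hanging off $u$ away from $w$, the $\Delta_2-1$ subtrees hanging off $w$ away from $u$, the subtrees hanging off interior vertices of $P$, and the edges of $P$ itself; these four classes are edge-disjoint and exhaust $E(T)$.

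The engine of the argument is a single per-branch estimate, which I would prove by induction on the number of edges: if $B$ is a subtree attached to a vertex $x$ through an edge $xv$ and every leaf of $B$ is a leaf of $T$, then the total of $\phi$ over $xv$ and all edges of $B$ is at least $d_T^2(x)-1$, with equality precisely when $B\cup\{xv\}$ is a pendant path carrying the degree pattern $d_T(x),2,\dots,2,1$. The base case is a single pendant edge; in the inductive step, writing $d_T(v)=m+1$, the connecting edge $xv$ contributes $d_T^2(x)-d_T^2(v)$ while the $m$ sub-branches contribute at least $m(d_T^2(v)-1)$ by induction, and since $(d_T^2(x)-d_T^2(v))+m(d_T^2(v)-1)=d_T^2(x)-1+(d_T(v)-2)(d_T^2(v)-1)\ge d_T^2(x)-1$, equality forces $d_T(v)\le 2$ and hence a pendant path. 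Applying this with $x=u$ to each of the $\Delta-1$ branches gives a total of at least $(\Delta-1)(\Delta^2-1)$, with $x=w$ it gives $(\Delta_2-1)(\Delta_2^2-1)$, the interior subtrees contribute at least $0$, and $\sum_{e\in P}\phi(e)=\phi_T(u,w)\ge \Delta^2-\Delta_2^2$. Summing the four classes yields $2SO_1(T)\ge (\Delta-1)(\Delta^2-1)+(\Delta_2-1)(\Delta_2^2-1)+(\Delta^2-\Delta_2^2)$, i.e. the claimed bound after dividing by $2$.

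For equality I would read off the conditions class by class. Tightness of the branch estimate forces every off-path branch of $u$ and of $w$ to be a pendant path, so $u$ is the centre of a spider with $\Delta-1$ legs and $w$ the centre of a spider with $\Delta_2-1$ legs. Tightness of the interior contribution (which equals $0$) forces every interior vertex of $P$ to have degree $2$, since a branching interior vertex $p$ would by the same estimate contribute at least $d_T^2(p)-1\ge 8$. Finally $\phi_T(u,w)=\Delta^2-\Delta_2^2$ requires the degrees along $P$ to be monotone; with all interior degrees equal to $2$ and endpoints $d_T(u)=\Delta\ge 3$, $d_T(w)=\Delta_2$, monotonicity is impossible once $P$ has an interior vertex (in the principal range $\Delta_2\ge 3$), so $P$ is the single edge $uw$. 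Hence $T$ is exactly two spiders joined at their centres by an edge, that is $T\in\mathcal{DST}_{n,\Delta,\Delta_2}$. For the converse I would verify directly that every member of $\mathcal{DST}_{n,\Delta,\Delta_2}$ attains the bound: each leg off $u$ telescopes to $\Delta^2-1$, each leg off $w$ telescopes to $\Delta_2^2-1$, and the central edge contributes $\Delta^2-\Delta_2^2$.

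The main obstacle is the equality analysis rather than the inequality itself: the per-branch lemma must be formulated so that it delivers the bound and the pendant-path equality case simultaneously, and the monotonicity step on $P$ must be handled with care in the degenerate regimes. In particular the case $\Delta_2=2$ (where the spider at $w$ degenerates to a single leg and the monotonicity constraint would permit a longer $u$--$w$ path) and the case $\Delta=\Delta_2$ (where $u$ and $w$ are two distinct maximum-degree vertices) should be reconciled separately against the intended reading of $\mathcal{DST}_{n,\Delta,\Delta_2}$.
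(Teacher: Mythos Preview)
Your proposal is correct and follows essentially the same approach as the paper. Both arguments fix vertices $u$ of degree $\Delta$ and $w$ (the paper's $v$) of degree $\Delta_2$, decompose $T$ around the $u$--$w$ path, and bound the contribution of each off-path component by $d^2-1$ via telescoping, then add $\phi_T(u,w)\ge\Delta^2-\Delta_2^2$. The paper does this more tersely by simply selecting $\Delta-1$ edge-disjoint $(u,\text{leaf})$ paths, $\Delta_2-1$ edge-disjoint $(w,\text{leaf})$ paths, and the $(u,w)$ path, then invoking the path inequality $\phi_T(v_1,v_t)\ge|d^2(v_1)-d^2(v_t)|$ directly on each; you instead reformulate the same bound as an inductive per-branch lemma. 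These are equivalent packagings of the same telescoping idea, and your version has the advantage of making the equality characterization explicit where the paper merely asserts it. Your flagged caveats about $\Delta_2\le 2$ and $\Delta=\Delta_2$ are genuine soft spots in the paper's own statement of $\mathcal{DST}_{n,\Delta,\Delta_2}$ rather than defects in your argument.
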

\begin{proof}
Suppose that $d(u)=\Delta$, $d(v)=\Delta_{2}$. Since $T$ is a tree, there exist a $(u,v)$ path in $T$.
Then there exists $\Delta-1$ edge-disjoint path $(u,u_{i})$ and $d(u_{i})=1$ for $i=1,2,\cdots,\Delta-1$. There exists $\Delta_{2}-1$ edge-disjoint path $(v,v_{i})$ and $d(v_{i})=1$ for $i=1,2,\cdots,\Delta_{2}-1$. Thus
\begin{align*}
2SO_{1}(T)\geq&  \sum_{i=1}^{\Delta-1}\phi_{T}(u,u_{i})+\sum_{j=1}^{\Delta_{2}-1}\phi_{T}(v,v_{j})
+\phi_{T}(u,v)\\
 \geq &  \sum_{i=1}^{\Delta-1}(d_{T}^{2}(u)-d_{T}^{2}(u_{i}))+\sum_{j=1}^{\Delta_{2}-1}(d_{T}^{2}(v)-d_{T}^{2}(v_{j}))
+(d_{T}^{2}(u)-d_{T}^{2}(v))  \\
  =   & (\Delta-1)(\Delta^{2}-1)+(\Delta_{2}-1)(\Delta_{2}^{2}-1)+\Delta^{2}-\Delta_{2}^{2}  ,&
\end{align*}
with equality if and only if $T\in \mathcal{DST}_{n,\Delta,\Delta_{2}}$.
\end{proof}

Let $\mathcal{T}_{n,\Delta}$ be the set of trees with $n$ vertices, maximum degree $\Delta$.
Let $f(x)=(\Delta-1)(\Delta^{2}-1)+(x-1)(x^{2}-1)+\Delta^{2}-x^{2}$. It is obvious that $f'(x)>0$ for $x\geq 2$. Thus $\min f(x)=\min\{f(1),f(2)\}=\Delta^{3}-\Delta$. Thus
\begin{corollary}\label{c7-2}
Let $\Delta \geq 3$ and $T\in \mathcal{T}_{n,\Delta}$. Then $SO_{1}(T)\geq
\frac{1}{2}(\Delta^{3}-\Delta)$
, with equality if and only if $T\in \mathcal{ST}_{n,\Delta}$.
\end{corollary}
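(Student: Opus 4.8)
\textbf{Proof proposal for Corollary \ref{c7-2}.}

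The plan is to derive Corollary \ref{c7-2} directly from Theorem \ref{t7-1} by treating the second maximum degree $\Delta_{2}$ as a free parameter and minimizing the resulting lower bound over all admissible values. First I would observe that any tree $T \in \mathcal{T}_{n,\Delta}$ has some second maximum degree $\Delta_2$, so $T \in \mathcal{T}_{n,\Delta,\Delta_2}$ for that value; hence Theorem \ref{t7-1} gives the bound $SO_1(T) \geq \tfrac{1}{2}f(\Delta_2)$, where $f(x) = (\Delta-1)(\Delta^2-1) + (x-1)(x^2-1) + \Delta^2 - x^2$. The admissible range for $\Delta_2$ is $1 \leq \Delta_2 \leq \Delta$ (the case $\Delta_2 = 1$ corresponding to the star, where $T \cong S_{\Delta+1}$ is itself a starlike tree). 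To get a bound valid for \emph{all} of $\mathcal{T}_{n,\Delta}$, I would take the minimum of $f$ over this range.

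The key computational step is to locate $\min_x f(x)$. I would compute $f'(x) = 3x^2 - 2x - 1 = (3x+1)(x-1)$, which is positive for $x > 1$ and confirms the claim in the excerpt that $f$ is increasing on $x \geq 2$. Since $f$ is increasing past $x=1$, the minimum over integer values $x \in \{1, 2, \dots, \Delta\}$ is attained at the left endpoint, and I would only need to compare $f(1)$ and $f(2)$ (as the excerpt does). A direct evaluation gives $f(1) = (\Delta-1)(\Delta^2-1) + 0 + \Delta^2 - 1 = \Delta(\Delta^2-1) = \Delta^3 - \Delta$, and $f(2) = (\Delta-1)(\Delta^2-1) + 3 + \Delta^2 - 4$, which one checks exceeds $f(1)$ for $\Delta \geq 3$. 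Hence $\min f(x) = f(1) = \Delta^3 - \Delta$, yielding $SO_1(T) \geq \tfrac{1}{2}(\Delta^3 - \Delta)$.

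For the equality characterization, I would trace back through when the bound $\tfrac{1}{2}f(\Delta_2)$ is both minimized and attained. Minimization forces $\Delta_2 = 1$, meaning $T$ has a unique vertex of degree $\geq 2$, i.e. exactly one branching vertex, so $T$ is a starlike tree with center of degree $\Delta$; and equality in Theorem \ref{t7-1} must hold for that configuration. The structure $\mathcal{DST}_{n,\Delta,\Delta_2}$ with $\Delta_2=1$ degenerates to a single starlike tree, so the equality case is precisely $T \in \mathcal{ST}_{n,\Delta}$.

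\emph{Main obstacle.} The substantive content is entirely in Theorem \ref{t7-1}; the corollary itself is a short optimization. The one point requiring genuine care is the equality analysis: I must verify that the global minimum of the \emph{lower bound} is actually achieved by an existing tree, and that the degenerate $\Delta_2 = 1$ case of $\mathcal{DST}_{n,\Delta,\Delta_2}$ coincides exactly with $\mathcal{ST}_{n,\Delta}$. This requires checking that a starlike tree with center degree $\Delta$ genuinely attains $SO_1 = \tfrac12(\Delta^3-\Delta)$ via the telescoping-path argument underlying Theorem \ref{t7-1}, so that the inequality chain is tight at $\Delta_2=1$ and nowhere looser.
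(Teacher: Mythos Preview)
Your approach is exactly the paper's: minimize the Theorem~\ref{t7-1} bound $\tfrac12 f(\Delta_2)$ over admissible $\Delta_2$. However, your evaluation of $f(2)$ contains an error. You write $f(2) = (\Delta-1)(\Delta^2-1) + 3 + \Delta^2 - 4$ and then claim this exceeds $f(1)$; but simplifying gives $f(2) = (\Delta-1)(\Delta^2-1) + (\Delta^2 - 1)$, which is identical to $f(1)$. So $f(1)=f(2)=\Delta^3-\Delta$, and the minimum is attained at \emph{both} $\Delta_2 = 1$ and $\Delta_2 = 2$. (Notice that the paper's line $\min\{f(1),f(2)\}=\Delta^3-\Delta$ makes no claim of strict inequality between the two.)

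This error propagates into your equality analysis and creates a genuine gap. You conclude that equality forces $\Delta_2 = 1$, which you interpret as ``$T$ has a unique vertex of degree $\geq 2$''; but that condition describes only the star $S_{\Delta+1}$, not an arbitrary starlike tree. A starlike tree with $n > \Delta+1$ has internal path vertices of degree $2$, hence $\Delta_2 = 2$, and would be excluded by your argument. To recover the full equality class $\mathcal{ST}_{n,\Delta}$ you must allow $\Delta_2 \in \{1,2\}$ and then check that the degenerate instance $\mathcal{DST}_{n,\Delta,2}$ (where the second ``center'' has degree $\Delta_2-1=1$, so the second piece is just a pendent path) collapses precisely to $\mathcal{ST}_{n,\Delta}$.
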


Further, we have
\begin{corollary}\label{c7-3}{\rm \cite{ybai2020}}
Let $n \geq 3$ and $T\in \mathcal{T}_{n}$. Then $SO_{1}(T)\geq 3$
, with equality if and only if $T\cong P_{n}$.
\end{corollary}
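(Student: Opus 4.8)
The plan is to split into cases on the maximum degree $\Delta$ of $T$, since any tree on $n \geq 3$ vertices satisfies $\Delta \geq 2$, and to treat $\Delta = 2$ (paths) separately from $\Delta \geq 3$, where Corollary \ref{c7-2} applies directly.

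First I would handle the case $\Delta = 2$. The only tree with maximum degree $2$ is the path, so $T \cong P_{n}$, and a direct computation settles this instance. Writing $P_{n} = v_{1}v_{2}\cdots v_{n}$, the two end edges each contribute $|1^{2}-2^{2}| = 3$ to $\sum_{uv\in E(G)}|d_{u}^{2}-d_{v}^{2}|$, while every interior edge joins two degree-$2$ vertices and contributes $0$. Hence $2SO_{1}(P_{n}) = 6$, so $SO_{1}(P_{n}) = 3$.

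Next I would dispose of the case $\Delta \geq 3$ by invoking Corollary \ref{c7-2}: for such $T$ we have $SO_{1}(T) \geq \frac{1}{2}(\Delta^{3}-\Delta)$. The map $x \mapsto x^{3}-x$ is strictly increasing for $x \geq 2$, so its minimum over integers $\Delta \geq 3$ is attained at $\Delta = 3$, giving $SO_{1}(T) \geq \frac{1}{2}(27-3) = 12 > 3$. Thus every tree possessing a branching vertex has first Sombor index strictly larger than $3$.

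Combining the two cases yields $SO_{1}(T) \geq 3$ for all $T \in \mathcal{T}_{n}$, and equality forces $\Delta = 2$, i.e. $T \cong P_{n}$. Since this is purely a packaging of the already-established Corollary \ref{c7-2} together with the one-line path computation, I do not anticipate any genuine obstacle; the only point requiring a moment's care is confirming that the strict inequality $12 > 3$ rules out equality whenever a branching vertex is present, so that $P_{n}$ is the \emph{unique} minimizer. (Note that the argument is uniform in $n$ and so covers the small cases $n = 3, 4$ that lie outside the range of Lemma \ref{l1-2}.)
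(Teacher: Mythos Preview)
Your proof is correct and follows precisely the route the paper intends: the paper presents Corollary~\ref{c7-3} immediately after Corollary~\ref{c7-2} with the words ``Further, we have,'' leaving the reader to supply exactly the case split $\Delta=2$ (direct computation for $P_n$) versus $\Delta\geq 3$ (where $SO_{1}(T)\geq \tfrac{1}{2}(\Delta^{3}-\Delta)\geq 12>3$) that you have written out. There is nothing to add.
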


Let $\mathcal{A}_{n}(g,\Delta)$ be the set of unicyclic graphs obtained by identifying one pendent vertex of a starlike trees in $\mathcal{ST}_{n-g+1,\Delta}$ with one vertex of cycle $C_{g}$.
Let $\mathcal{B}_{n}(g,\Delta)$ be the set of $n$-vertices unicyclic graphs obtained by attaching $\Delta-2$ paths to one vertex of cycle $C_{g}$.
Suppose $G\in \mathcal{A}_{n}(g,\Delta)$ and $l$ is the length of shortest path from the center of starlike tree to $C_{g}$. Then $2SO_{1}(G)=\Delta^{3}-\Delta+2$ if $l=1$; $2SO_{1}(G)=\Delta^{3}-\Delta+12$ if $l\geq 2$.
Suppose $G\in \mathcal{B}_{n}(g,\Delta)$. Then $2SO_{1}(G)=\Delta^{3}-\Delta-6$.
Let $\mathcal{U}_{n,\Delta,g}$ be the set of unicyclic graphs with $n$ vertices, maximum degree $\Delta$ and girth $g$.
\begin{theorem}\label{t7-4}
Let $G\in \mathcal{U}_{n,\Delta,g}$. Then $SO_{1}(G)\geq
\frac{1}{2}(\Delta^{3}-\Delta-6)$
, with equality if and only if $G\in \mathcal{B}_{n}(g,\Delta)$.
\end{theorem}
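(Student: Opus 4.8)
The plan is to fix a vertex $u$ with $d(u)=\Delta$ and to bound $2SO_1(G)=\sum_{xy\in E(G)}|d_x^2-d_y^2|$ from below by splitting the edge set into two kinds of pieces: edge-disjoint paths running from a vertex to a pendant vertex, and the $g$ edges of the unique cycle $C$. For a path from a vertex $w$ to a leaf $\ell$ the monotonicity remark stated before Theorem \ref{t7-1} gives $\phi_G(w,\ell)\ge d_w^2-d_\ell^2=d_w^2-1$; for the cycle I use the elementary cyclic total-variation bound that for any values on $C=c_0c_1\cdots c_{g-1}c_0$ one has $\sum_i|d_{c_i}^2-d_{c_{i+1}}^2|\ge 2(\max_{c\in C}d_c^2-\min_{c\in C}d_c^2)$, with equality iff the squared-degree sequence is monotone along each of the two arcs joining a maximizing and a minimizing cycle vertex. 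Since all selected pieces are edge-disjoint, their contributions sum to at most $2SO_1(G)$. The guiding identity is $\Delta^3-\Delta-6=(\Delta-2)(\Delta^2-1)+2(\Delta^2-4)$, which already predicts the extremal shape: $\Delta-2$ pendant paths at $u$ plus a cycle through $u$ whose other vertices have degree $2$.

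First I treat the main case $u\in V(C)$ with $\min_{c\in C}d_c=2$. Here $u$ spends two edges on $C$ and its remaining $\Delta-2$ edges enter tree branches, each containing a pendant vertex of $G$; routing one path from $u$ to a leaf in each branch gives $\Delta-2$ edge-disjoint paths contributing at least $(\Delta-2)(\Delta^2-1)$, while the cyclic bound gives the cycle edges at least $2(\Delta^2-2^2)=2(\Delta^2-4)$, since the maximum squared degree on $C$ is $\Delta^2$ (attained at $u$) and the minimum is $4$. These two families are edge-disjoint, so their sum already reaches the target $\Delta^3-\Delta-6$.

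Next, if $u\in V(C)$ but every cycle vertex has degree $m:=\min_{c\in C}d_c\ge 3$, then besides $u$'s $\Delta-2$ branch paths each of the $g-1\ge 2$ other cycle vertices $z$ carries $d_z-2\ge 1$ branch paths contributing $\ge(d_z-2)(d_z^2-1)\ge(m-2)(m^2-1)$, while the cyclic bound gives the cycle edges $\ge 2(\Delta^2-m^2)$. Subtracting the common term $(\Delta-2)(\Delta^2-1)$, the claim reduces to $(g-1)(m-2)(m^2-1)\ge 2(m^2-4)$, and since $g-1\ge 2$ this follows from $(m-2)(m^2-1)-(m^2-4)=(m-2)(m^2-m-3)>0$ for $m\ge3$; the strictness shows these graphs lie strictly above the bound. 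Finally, if $u\notin V(C)$ then $G-u$ has exactly $\Delta$ components, of which $\Delta-1$ are trees; routing a path from $u$ to a leaf in each tree component already yields $(\Delta-1)(\Delta^2-1)$, and unless the cyclic component is a bare cycle joined to $u$ by a single edge (the configuration $\mathcal{A}_n(g,\Delta)$ with $l=1$, for which the paper records $2SO_1=\Delta^3-\Delta+2$) that component also supplies a leaf and one more path, pushing the total past $\Delta^3-\Delta$; either way $2SO_1(G)>\Delta^3-\Delta-6$.

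The equality analysis then runs backwards through the only case meeting the bound, namely $u\in V(C)$ with $\min_{c\in C}d_c=2$: any non-$u$ cycle vertex of degree $\ge 3$, or any branch at $u$ that is not a bare path, produces a strictly positive surplus, so equality forces every non-$u$ cycle vertex to have degree $2$ and every one of the $\Delta-2$ branches at $u$ to be a path, i.e.\ $G\in\mathcal{B}_n(g,\Delta)$. I expect the cycle to be the crux: the delicate point is extracting exactly $2(\Delta^2-4)$ from the cycle edges and, when $C$ carries vertices of degree greater than $2$, balancing the loss in the cyclic bound against the extra branch paths those vertices create—precisely what $(m-2)(m^2-m-3)>0$ quantifies—together with isolating the single stray configuration arising when $u\notin V(C)$.
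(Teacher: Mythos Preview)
Your approach—decomposing $2SO_1(G)$ into contributions from edge-disjoint paths to leaves plus the cycle edges—parallels the paper's, and your use of the cyclic total-variation inequality $\sum_{C}|d_{c_i}^2-d_{c_{i+1}}^2|\ge 2(\max_{c\in C}d_c^2-\min_{c\in C}d_c^2)$ is a clean device the paper does not employ (the paper instead uses only the two cycle edges incident to the distinguished vertex together with the branches at those two neighbours). Your treatment of the cases $u\in V(C)$ with $\min_{c\in C}d_c=2$ and with $\min_{c\in C}d_c\ge 3$ is correct, and the equality analysis is sound.

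There is, however, a real gap in the case $u\notin V(C)$. You assert that unless the cyclic component of $G-u$ is a bare cycle (the $l=1$ configuration) it ``also supplies a leaf and one more path''. This is false: the cyclic component can be the cycle $C_g$ together with a single bare pendant path ending at the vertex $v'$ adjacent to $u$; then $d_G(v')=2$ and \emph{no} vertex of that component is pendent in $G$. This is precisely $\mathcal{A}_n(g,\Delta)$ with $l\ge 2$. In that situation your $\Delta-1$ tree-component paths yield only $(\Delta-1)(\Delta^2-1)=\Delta^3-\Delta^2-\Delta+1$, which for every $\Delta\ge 3$ is strictly \emph{smaller} than the target $\Delta^3-\Delta-6$, so the argument does not close. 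The repair is easy and in the spirit of your own method: add the path contribution $\phi_G(u,v)\ge \Delta^2-9$ (with $v$ the nearest cycle vertex, of degree $3$ here) and your cyclic bound on $C$, which gives $\ge 2(9-4)=10$; the total becomes $(\Delta-1)(\Delta^2-1)+(\Delta^2-9)+10=\Delta^3-\Delta+2>\Delta^3-\Delta-6$. Alternatively, simply cite the paper's recorded value $2SO_1=\Delta^3-\Delta+12$ for $\mathcal{A}_n(g,\Delta)$ with $l\ge 2$.
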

\begin{proof}
Suppose that $d(u)=\Delta$ and $v$ is the nearest vertex apart from $u$ in $C_{g}$.
Let $d_{G}(u,v)=l$ and $N_{C_{g}}(v)=\{w,z\}$.
Then there exists $d_{G}(w)-2$ edge-disjoint path $(w,w_{i})$ and $d(w_{i})=1$ for $i=1,2,\cdots,d_{G}(w)-2$. There exists $d_{G}(z)-2$ edge-disjoint path $(z,z_{i})$ and $d(z_{i})=1$ for $i=1,2,\cdots,d_{G}(z)-2$.

If $l=0$, then there exists $\Delta-2$ edge-disjoint path $(u,u_{i})$ and $d(u_{i})=1$ for $i=1,2,\cdots,\Delta-2$. Then
\begin{align*}
2SO_{1}(T)\geq&  \sum_{i=1}^{\Delta-2}\phi_{G}(u,u_{i})+\sum_{i=1}^{d_{G}(w)-2}\phi_{G}(w,w_{i})
+\sum_{i=1}^{d_{G}(z)-2}\phi_{G}(z,z_{i})+\phi_{G}(u,w)+\phi_{G}(u,z)\\
 \geq &  \sum_{i=1}^{\Delta-2}|d_{G}^{2}(u)-d_{G}^{2}(u_{i})|+\sum_{i=1}^{d_{G}(w)-2}
 |d_{G}^{2}(w)-d_{G}^{2}(w_{i})|+\sum_{i=1}^{d_{G}(z)-2}|d_{G}^{2}(z)-d_{G}^{2}(z_{i})|  \\
\quad &  +|d_{G}^{2}(u)-d_{G}^{2}(w)|+|d_{G}^{2}(u)-d_{G}^{2}(z)|\\
\geq &  (\Delta-2)(\Delta^{2}-1)+ (d_{G}(w)-2)(d_{G}^{2}(w)-1)+(d_{G}(z)-2)(d_{G}^{2}(z)-1)       \\
\quad &  +\Delta^{2}-d_{G}^{2}(v)+\Delta^{2}-d_{G}^{2}(z)    \\
 =   &   (\Delta-2)(\Delta^{2}-1)+ 2\Delta^{2}-8+(d_{G}(w)-2)(d_{G}^{2}(w)-d_{G}(w)-3)           \\
 \quad & +(d_{G}(z)-2)(d_{G}^{2}(z)-d_{G}(z)-3)\\
 \geq   & \Delta^{3}-\Delta-6  ,&
\end{align*}
with equality if and only if $G\in \mathcal{B}_{n}(g,\Delta)$.

If $l\geq 1$, then there exists $\Delta-1$ edge-disjoint path $(u,u_{i})$ and $d(u_{i})=1$ for $i=1,2,\cdots,\Delta-1$. Then
\begin{align*}
2SO_{1}(T)\geq&  \sum_{i=1}^{\Delta-1}\phi_{G}(u,u_{i})+\sum_{i=1}^{d_{G}(w)-2}\phi_{G}(w,w_{i})
+\sum_{i=1}^{d_{G}(z)-2}\phi_{G}(z,z_{i})+\phi_{G}(u,v)+\phi_{G}(v,w)+\phi_{G}(v,z)\\
 \geq &  \sum_{i=1}^{\Delta-1}|d_{G}^{2}(u)-d_{G}^{2}(u_{i})|+\sum_{i=1}^{d_{G}(w)-2}
 |d_{G}^{2}(w)-d_{G}^{2}(w_{i})|
+\sum_{i=1}^{d_{G}(z)-2}|d_{G}^{2}(z)-d_{G}^{2}(z_{i})|  \\
\quad &  +|d_{G}^{2}(u)-d_{G}^{2}(v)|+|d_{G}^{2}(v)-d_{G}^{2}(w)|+|d_{G}^{2}(v)-d_{G}^{2}(z)|\\
\geq &  (\Delta-1)(\Delta^{2}-1)+ (d_{G}(w)-2)(d_{G}^{2}(w)-1)+(d_{G}(z)-2)(d_{G}^{2}(z)-1)       \\
\quad &  +\Delta^{2}-d_{G}^{2}(v)+d_{G}^{2}(v)-d_{G}^{2}(w)+d_{G}^{2}(v)-d_{G}^{2}(z)    \\
 =   &   (\Delta-1)(\Delta^{2}-1)+ \Delta^{2}+ d_{G}^{2}(v)-8+(d_{G}(w)-2)(d_{G}^{2}(w)-d_{G}(w)-3)           \\
 \quad & +(d_{G}(z)-2)(d_{G}^{2}(z)-d_{G}(z)-3)\\
 \geq   & \Delta^{3}-\Delta+2  .&
\end{align*}

Thus $SO_{1}(G)\geq
\frac{1}{2}(\Delta^{3}-\Delta-6)$
, with equality if and only if $G\in \mathcal{B}_{n}(g,\Delta)$.
\end{proof}

Let $\mathcal{U}_{n,\Delta}$ be the set of unicyclic graphs with $n$ vertices and maximum degree $\Delta$. By Theorem \ref{t7-4}, we have
\begin{corollary}\label{c7-5}
Let $G\in \mathcal{U}_{n,\Delta}$. Then $SO_{1}(G)\geq
\frac{1}{2}(\Delta^{3}-\Delta-6)$
, with equality if and only if $G\in \mathcal{B}_{n}(g,\Delta)$ where $g\in \{3,4,\cdots,n-\Delta+2\}$.
\end{corollary}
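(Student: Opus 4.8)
The plan is to deduce Corollary~\ref{c7-5} directly from Theorem~\ref{t7-4} by quantifying over all admissible girths. The key observation is that $\mathcal{U}_{n,\Delta}=\bigcup_{g}\mathcal{U}_{n,\Delta,g}$, where the union ranges over all girths $g$ that can actually occur in an $n$-vertex unicyclic graph with maximum degree $\Delta$. Since any unicyclic graph on $n$ vertices has a unique cycle of some length $g\geq 3$, and since we need room for a vertex of degree $\Delta$ (which, together with its neighbours and the remaining tree structure, forces $g\leq n-\Delta+2$), the feasible range is precisely $3\leq g\leq n-\Delta+2$. Thus every $G\in\mathcal{U}_{n,\Delta}$ lies in $\mathcal{U}_{n,\Delta,g}$ for exactly one such $g$.

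First I would fix an arbitrary $G\in\mathcal{U}_{n,\Delta}$ and let $g$ be its girth; then $G\in\mathcal{U}_{n,\Delta,g}$ with $3\leq g\leq n-\Delta+2$. Applying Theorem~\ref{t7-4} to this $G$ gives immediately $SO_{1}(G)\geq\frac{1}{2}(\Delta^{3}-\Delta-6)$, and the stated lower bound holds because the bound in Theorem~\ref{t7-4} does not depend on $g$. For the equality characterization, Theorem~\ref{t7-4} tells us that equality forces $G\in\mathcal{B}_{n}(g,\Delta)$; conversely any $G\in\mathcal{B}_{n}(g,\Delta)$ with $g$ in the admissible range satisfies $2SO_{1}(G)=\Delta^{3}-\Delta-6$ by the explicit computation recorded just before Theorem~\ref{t7-4}. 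Collecting the extremal graphs over all admissible $g$ yields exactly the set $\bigcup_{g=3}^{n-\Delta+2}\mathcal{B}_{n}(g,\Delta)$, which is the characterization stated in the corollary.

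The one point requiring genuine care is the endpoint verification of the girth range, i.e.\ confirming that $g$ may take any value in $\{3,4,\dots,n-\Delta+2\}$ and no larger. I would argue the upper bound by a vertex count: a graph in $\mathcal{B}_{n}(g,\Delta)$ consists of a cycle $C_{g}$ together with $\Delta-2$ attached paths at a single vertex, and realizing maximum degree exactly $\Delta$ at that cycle vertex while using the remaining $n-g$ vertices on the pendant paths requires $n-g\geq \Delta-2$, i.e.\ $g\leq n-\Delta+2$. The lower bound $g\geq 3$ is simply the definition of girth. Since the inequality in Theorem~\ref{t7-4} was established uniformly over all $g$ in this range, no further estimation is needed; the corollary is essentially a bookkeeping consequence, and the only obstacle is stating the girth range correctly.
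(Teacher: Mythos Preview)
Your proposal is correct and is exactly the approach the paper intends: in the paper the corollary is stated immediately after Theorem~\ref{t7-4} with nothing more than the phrase ``By Theorem~\ref{t7-4}, we have'', so you are simply spelling out the bookkeeping (the decomposition $\mathcal{U}_{n,\Delta}=\bigcup_{g}\mathcal{U}_{n,\Delta,g}$ and the nonemptiness range $3\le g\le n-\Delta+2$ for $\mathcal{B}_{n}(g,\Delta)$) that the paper leaves implicit.
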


Let $\mathcal{U}_{n,g}$ be the set of unicyclic graphs with $n$ vertices and girth $g$.
Note that if $G\ncong C_{n}$, then $\Delta\geq 3$. Then by Theorem \ref{t7-4}, we have
\begin{corollary}\label{c7-6}
Let $G\in \mathcal{U}_{n,g}$ and $G\ncong C_{n}$. Then $SO_{1}(G)\geq 9$
, with equality if and only if $G\in \mathcal{B}_{n}(g,3)$.
\end{corollary}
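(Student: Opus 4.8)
The plan is to obtain this directly from Theorem \ref{t7-4} by optimizing the lower bound over the maximum degree $\Delta$. The corollary fixes the girth $g$ but leaves $\Delta$ free, so the first task is to restrict the possible values of $\Delta$. I would begin with the structural observation flagged in the surrounding text: a connected unicyclic graph on $n$ vertices has exactly $n$ edges, hence degree sum $2n$. If every vertex had degree at most $2$, the graph would be a disjoint union of paths and cycles; connectedness forces it to be a single path or a single cycle, and only the cycle $C_{n}$ has $n$ edges (a path has $n-1$). Therefore $G\ncong C_{n}$ forces $\Delta(G)\geq 3$, so $G\in \mathcal{U}_{n,\Delta,g}$ for some $\Delta\geq 3$.

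Next I would analyze the bound from Theorem \ref{t7-4} as a function of $\Delta$. Writing $h(\Delta)=\frac{1}{2}(\Delta^{3}-\Delta-6)$, one checks that $h'(\Delta)=\frac{1}{2}(3\Delta^{2}-1)>0$ for all $\Delta\geq 1$, so $h$ is strictly increasing. Consequently its minimum over integers $\Delta\geq 3$ is attained at $\Delta=3$, where $h(3)=\frac{1}{2}(27-3-6)=9$. Combining this with Theorem \ref{t7-4} yields the chain $SO_{1}(G)\geq h(\Delta)\geq h(3)=9$, which is the asserted bound.

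For the equality characterization I would track the two inequalities separately. The first, $SO_{1}(G)=h(\Delta)$, holds exactly when $G\in \mathcal{B}_{n}(g,\Delta)$ by Theorem \ref{t7-4}; the second, $h(\Delta)=9$, holds exactly when $\Delta=3$ since $h$ is strictly increasing. Hence $SO_{1}(G)=9$ forces both $\Delta=3$ and $G\in \mathcal{B}_{n}(g,3)$. Conversely, any $G\in \mathcal{B}_{n}(g,3)$ has maximum degree $3$ (the distinguished cycle vertex carries one attached path, giving degree $2+1=3$) and realizes equality in Theorem \ref{t7-4}, so $SO_{1}(G)=h(3)=9$.

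I do not anticipate a serious obstacle, as the corollary is essentially a one-variable optimization layered on top of Theorem \ref{t7-4}. The only point demanding care is the equality analysis: because the final bound is a composition of two inequalities, I must confirm that equality in the combined statement is equivalent to equality in each, and in particular verify that every member of $\mathcal{B}_{n}(g,3)$ genuinely has $\Delta=3$, so that the two equality conditions are simultaneously attainable.
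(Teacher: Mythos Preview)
Your proposal is correct and follows essentially the same approach as the paper: the paper simply notes that $G\ncong C_{n}$ forces $\Delta\geq 3$ and then invokes Theorem~\ref{t7-4}, leaving the monotonicity of $\tfrac{1}{2}(\Delta^{3}-\Delta-6)$ and the equality analysis implicit. Your write-up fills in those details more carefully but adds nothing substantively different.
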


Let $\mathcal{G}_{n,k,\Delta}$ be the set of connected graphs with $n$ vertices, maximum degree $\Delta$ and $k$ pendent vertices.
\begin{theorem}\label{t7-7}
Let $G\in \mathcal{U}_{n,k,\Delta}$ and $G\ncong P_{n}$. Then $SO_{1}(G)\geq
\frac{1}{2}(8k+\Delta^{2}-9)$.
\end{theorem}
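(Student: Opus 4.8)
The plan is to bound $2SO_1(G)=\sum_{xy\in E(G)}\phi(x,y)$ from below by exhibiting a family of \emph{pairwise edge-disjoint} paths and summing over them the endpoint estimate $\phi_{G}(a,b)\ge |d_{G}^{2}(a)-d_{G}^{2}(b)|$ recalled just before Theorem~\ref{t7-1}; since distinct edges are then counted at most once and every $\phi$-term is nonnegative, any such sum is a valid lower bound for $2SO_1(G)$. Throughout I assume $k\ge 1$ (so that $G$ really has pendent vertices), and I note first that $G\ncong P_n$ together with $k\ge1$ forces $\Delta\ge 3$: a connected graph of maximum degree $2$ having a pendent vertex is a path. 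In particular $\Delta^{2}-9\ge0$.

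First I would attach to each pendent vertex a \emph{thread}. Fix $p_i\in PV(G)$ and walk inward through vertices of degree $2$ until the first vertex $b_i$ with $d_{G}(b_i)\ge 3$ is reached; call the resulting path $\pi_i$. Such a $b_i$ exists for every $i$, since otherwise the walk would terminate at a second pendent vertex and, $G$ being connected, $G$ would be a path, contradicting $G\ncong P_n$. The interior vertices of each $\pi_i$ have degree $2$, which is exactly what makes the threads pairwise edge-disjoint: if $\pi_i$ and $\pi_j$ $(i\ne j)$ shared an edge, the unique two-way continuation forced at each interior degree-$2$ vertex would make them coincide all the way to their pendent endpoints, giving $p_i=p_j$. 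By the endpoint estimate, $\phi_{G}(\pi_i)\ge|d_{G}^{2}(b_i)-1|=d_{G}^{2}(b_i)-1\ge 8$.

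Next I would single out a maximum-degree vertex $u$ (so $d_{G}(u)=\Delta$), let $p^{*}$ be a pendent vertex nearest to $u$, and let $Q$ be a shortest $u$--$p^{*}$ path; then $\phi_{G}(Q)\ge|d_{G}^{2}(u)-1|=\Delta^{2}-1$. The crucial step, and the one I expect to be the main obstacle, is to show that $Q$ is edge-disjoint from every thread $\pi_i$ with $i\ne p^{*}$. Suppose not, and let $R$ be a maximal subpath of $Q$ whose edges all lie on $\pi_i$. Since every interior vertex of $\pi_i$ has degree $2$, the path $Q$ cannot branch off $\pi_i$ at such a vertex, so each endpoint of $R$ must be an endpoint of $\pi_i$, i.e.\ lie in $\{p_i,b_i\}$; hence $R=\pi_i$ and the leaf $p_i$ lies on $Q$. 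But $d_{G}(p_i)=1$, so $p_i$ can only be an endpoint of the simple path $Q$, forcing $p_i\in\{u,p^{*}\}$, which is impossible because $p_i\ne u$ and $i\ne p^{*}$. This contradiction proves the disjointness.

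Finally, $Q$ and $\{\pi_i:\ i\ne p^{*}\}$ are pairwise edge-disjoint, so
\[
2SO_1(G)\ \ge\ \phi_{G}(Q)+\sum_{i\ne p^{*}}\phi_{G}(\pi_i)\ \ge\ (\Delta^{2}-1)+8(k-1)\ =\ 8k+\Delta^{2}-9,
\]
which gives the claim after dividing by $2$. I stress that this estimate is uniform in the location of $u$ and needs no case analysis on the degrees of the neighbours of $u$ (an earlier, clumsier version splitting on whether some thread ends at $u$ can be avoided); the only delicate ingredient is the edge-disjointness argument above, where the degree-$2$ structure of the threads must be exploited carefully.
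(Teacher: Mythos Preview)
Your proof is correct and follows essentially the same strategy as the paper: lower-bound $2SO_{1}(G)$ by the $\phi$-sums along the $k$ pendent threads together with a path linking a maximum-degree vertex to one of them, then apply the endpoint estimate $\phi_{G}(a,b)\ge|d_{G}^{2}(a)-d_{G}^{2}(b)|$ to each. The only cosmetic difference is that the paper keeps the path $(w,v_{1})$ and the first pendent path $(u_{1},v_{1})$ separate (and silently assumes they are edge-disjoint from the other threads), whereas you merge them into a single path $Q$ and supply the edge-disjointness argument explicitly.
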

\begin{proof}
Suppose that $d(u_{1})=d(u_{2})=\cdots=d(u_{k})=1$ and $(u_{1},v_{1}),(u_{2},v_{2}),\cdots,(u_{k},v_{k})$ be the $k$ pendent paths of $G$.
Let $d_{G}(w)=\Delta$. Then
$2SO_{1}(G)\geq \sum\limits_{i=1}^{k}\phi_{G}(u_{i},v_{i})+\phi_{G}(w,v_{1})\geq \sum\limits_{i=2}^{k}|d_{G}^{2}(v_{i})-1|+d_{G}^{2}(u_{1})-1+\Delta^{2}-d_{G}^{2}(u_{1})
\geq 8(k-1)+\Delta^{2}-1=8k+\Delta^{2}-9$.
\end{proof}

Let $\mathcal{G}_{n,k}$ be the set of connected graphs with $n$ vertices and $k$ pendent vertices.
\begin{corollary}\label{c7-8}
Let $G\in \mathcal{G}_{n,k}$ and $G\ncong P_{n}$. Then $SO_{1}(G)\geq4k$,
with equality if and only if $G\in \mathcal{U}_{n,k}$ $(1\leq k\leq \frac{n+2}{2})$, $\Delta=3$ and all $2$-vertex is in pendent paths.
\end{corollary}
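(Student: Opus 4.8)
The plan is to deduce the statement directly from Theorem \ref{t7-7}, handling the inequality and the equality case separately.

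For the inequality, I would first reduce to the case $\Delta(G)\ge 3$. If $k\ge 1$ and $G$ is connected with $G\ncong P_{n}$, then necessarily $\Delta(G)\ge 3$: a connected graph with $\Delta\le 2$ is a path or a cycle, but a cycle has no pendent vertex and the only path with a pendent vertex is $P_{n}$ itself. (The case $k=0$ is trivial, since $SO_{1}(G)\ge 0=4k$.) Theorem \ref{t7-7} then yields $SO_{1}(G)\ge\frac{1}{2}(8k+\Delta^{2}-9)$, and because $\Delta\ge 3$ gives $\Delta^{2}-9\ge 0$, we conclude $SO_{1}(G)\ge\frac{1}{2}\cdot 8k=4k$.

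For equality, the displayed bound forces two things at once: $\Delta^{2}-9=0$, i.e.\ $\Delta=3$, and tightness of every inequality in the proof of Theorem \ref{t7-7}. With $\Delta=3$ secured, I would switch to the more transparent edge-type bookkeeping. Since all degrees lie in $\{1,2,3\}$, the only edge types contributing to $2SO_{1}(G)=\sum_{uv\in E(G)}|d_{u}^{2}-d_{v}^{2}|$ are counted by $m_{1,2},m_{1,3},m_{2,3}$, so $2SO_{1}(G)=3m_{1,2}+8m_{1,3}+5m_{2,3}$. Assuming $n\ge 3$, each of the $k$ pendent vertices sits at the end of a pendent path (a maximal path of $2$-vertices terminating at a $3$-vertex), and a direct check shows that the edges of each such path contribute exactly $8$ to $2SO_{1}(G)$ regardless of its length (a length-one path gives a single $(1,3)$-edge of weight $8$; a longer one gives a $(1,2)$-edge, some weightless $(2,2)$-edges, and a $(2,3)$-edge, of total weight $3+5=8$). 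Decomposing $E(G)$ into the $k$ (edge-disjoint) pendent paths together with the remaining ``core'' edges, I would show $SO_{1}(G)=4k$ holds precisely when the core contributes nothing, i.e.\ when every $2$-vertex lies on a pendent path and every core edge joins two $3$-vertices.

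The step I expect to be the main obstacle is converting this edge-weight condition into the clean structural description in the statement and delimiting the admissible range of $k$. Here one must count: writing $n_{3}$ for the number of $3$-vertices and combining the handshake identity with $\Delta=3$ determines $n_{3}$ in terms of $k$ and the cyclomatic number, while the requirement that the $k$ pendent paths and the core fit inside $n$ vertices produces the upper bound on $k$, the extremal configuration having all pendent paths of length one. I would close by verifying conversely that every graph satisfying $\Delta=3$ with all $2$-vertices on pendent paths attains $SO_{1}(G)=4k$; reconciling the precise family of equality graphs obtained this way with the form stated in the corollary is the delicate part of the argument.
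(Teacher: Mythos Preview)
Your approach is correct but the paper's is more direct: rather than invoking Theorem~\ref{t7-7} as a black box to force $\Delta=3$ and then doing a separate edge-type count, the paper simply repeats the pendent-path argument of Theorem~\ref{t7-7} in stripped-down form, writing
\[
2SO_{1}(G)\ \ge\ \sum_{i=1}^{k}\phi_{G}(u_{i},v_{i})\ \ge\ \sum_{i=1}^{k}\bigl|d_{G}^{2}(v_{i})-1\bigr|\ \ge\ 8k,
\]
where $(u_{i},v_{i})$ are the $k$ pendent paths and $d_{G}(v_{i})\ge 3$. This single chain delivers the bound and the equality condition (each $v_{i}$ of degree exactly $3$, and every edge off the pendent paths joining equal-degree endpoints) simultaneously. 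Your edge-type bookkeeping $2SO_{1}=3m_{1,2}+8m_{1,3}+5m_{2,3}$ after fixing $\Delta=3$ is the same computation rewritten, so the two routes converge. What your detour through Theorem~\ref{t7-7} buys is a clean way to pin down $\Delta=3$ before any casework; what the paper's direct route buys is brevity. Your closing caution is warranted: the paper's own proof simply \emph{asserts} the structural description ``$G\in\mathcal{U}_{n,k}$, $1\le k\le\frac{n+2}{2}$, $\Delta=3$, all $2$-vertices on pendent paths'' without deriving the cyclomatic restriction or the range of $k$ from the equality conditions, so on the point you flag as delicate the paper gives you no additional help.
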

\begin{proof}
Similar to the proof of Theorem \ref{t7-7},we have $2SO_{1}(G)\geq \sum\limits_{i=1}^{k}\phi_{G}(u_{i},v_{i})\geq \sum\limits_{i=1}^{k}|d_{G}^{2}(v_{i})-1|
\geq 8k$, with equality if and only if $G\in \mathcal{U}_{n,k}$ $(1\leq k\leq \frac{n+2}{2})$, $\Delta=3$ and all $2$-vertex is in pendent paths.
\end{proof}

In the following, we obtained the properties about the maximum connected graphs with $n$ vertices and $k$ pendent vertices with respect to $SO_{1}$.

\begin{theorem}\label{t7-9}
Let $1\leq k\leq n-3$ and $G\in \mathcal{G}_{n,k}$. If $G$ has the maximum $SO_{1}(G)$, then
$\Delta(G)=n-1$, $\Delta_{2}(G)=n-k-1$, and the vertex with degree $n-1$ is unique.
\end{theorem}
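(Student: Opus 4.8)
The plan is to treat the three assertions separately; in each case I assume $G\in\mathcal{G}_{n,k}$ is a maximizer and, whenever the asserted property fails, I produce a graph $G'\in\mathcal{G}_{n,k}$ with $SO_{1}(G')>SO_{1}(G)$. The engine throughout is the effect of a single degree change on $2SO_{1}(G)=\sum_{uv\in E}|d_{u}^{2}-d_{v}^{2}|$: if $w$ has maximum degree $\Delta$, then raising $d_{w}$ by one raises every term $|d_{w}^{2}-d_{y}^{2}|$ with $y\in N(w)$ by exactly $2\Delta+1$, since $d_{w}\ge d_{y}$. The recurring difficulty, which I expect to be the crux of the whole argument, is that every transformation must preserve the number of pendent vertices \emph{exactly}; in particular I must never turn a degree-$2$ vertex into a new leaf, and this forces the ``obvious'' single-edge moves to be replaced by more careful two- or three-edge moves around degree-$2$ vertices.

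First I would prove $\Delta(G)=n-1$. Fix a maximum-degree vertex $w$. If some non-pendent vertex $x$ is not adjacent to $w$, then $G+wx$ again lies in $\mathcal{G}_{n,k}$ (only $d_{w},d_{x}$ rise and both were $\ge2$, so no vertex changes leaf-status), and the computation above gives a gain of at least $\Delta(2\Delta+1)-d_{x}(2d_{x}+1)\ge0$ together with a nonnegative new-edge term, hence a strict increase. Thus in the maximizer $w$ is adjacent to every non-pendent vertex. If moreover every leaf is adjacent to $w$ we are done; otherwise a leaf $p$ has its unique neighbour $z\neq w$, where $z$ is non-pendent and so $z\in N(w)$. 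If $d_{z}\ge3$, replacing $pz$ by $pw$ keeps $z$ non-pendent, preserves $k$, and a short absolute-value computation yields an increase of $3[\Delta(\Delta+1)-d_{z}(d_{z}-1)]>0$. If $d_{z}=2$ (so $z$ is adjacent only to $w$ and $p$) the same replacement would create a new leaf; instead I use that $k\le n-3$ forces a further non-pendent vertex $u\in N(w)$ and perform the combined move: delete $pz$, add $pw$, add $zu$. This leaves $z$ of degree $2$ and $p$ of degree $1$, so $k$ is unchanged, and the gain is at least $\Delta(2\Delta+1)-d_{u}(2d_{u}+1)$ plus positive edge terms, which is $>0$.

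Uniqueness of the vertex of degree $n-1$ is then immediate: if two vertices both had degree $n-1$ they would be adjacent to all others, so every vertex would have degree $\ge2$, contradicting $k\ge1$.

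Finally, for $\Delta_{2}(G)=n-k-1$ I work with $w$ now universal. The $k$ leaves hang on $w$, and the remaining $n-1-k\ge2$ \emph{core} vertices each have degree $\ge2$ (one edge to $w$ plus at least one inside the core); hence every non-$w$ vertex has degree at most $n-k-1$, giving $\Delta_{2}\le n-k-1$. For the reverse inequality, write $2SO_{1}=\big(k[(n-1)^{2}-1]+(n-1-k)(n-1)^{2}\big)-\sum_{c}d_{c}^{2}+\sum_{\text{core }ab}|d_{a}^{2}-d_{b}^{2}|$, a constant plus a term depending only on the core. Let $v$ be a core vertex of largest degree and suppose $d_{v}\le n-k-2$, so $v$ misses some core vertex; the task is to show that saturating $v$ (joining it to all core vertices it currently misses) strictly increases this quantity, the new edges $vv'$ each contributing on the order of $(n-k-1)^{2}$ and dominating the losses $-(2d_{v'}+1)$ from the edges $v'w$ to the hub. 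The main obstacle here is exactly this sign bookkeeping, because a single added edge $vv'$ between two degree-$2$ core vertices is $SO_{1}$-neutral (I have checked this on small cases); the strict gain appears only once $v$ is pushed all the way to degree $n-k-1$, so the argument must be run on the aggregate saturating move (equivalently, an edge-lifting of the core in the spirit of Lemma \ref{l1-1}) rather than one edge at a time.
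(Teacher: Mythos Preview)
Your treatment of $\Delta(G)=n-1$ and of uniqueness is correct, and your case split (first add edges to make $w$ adjacent to every non-leaf, then reroute any stray leaf onto $w$) is in fact tidier than the paper's: the paper picks an arbitrary non-neighbour $w$ of the max-degree vertex $u$ and branches on whether $N_G(w)\subseteq N_G(u)$, producing four sub-cases where you have essentially two. Two of your sentences are a touch fast --- the ``$\ge 0$ together with a nonnegative term, hence strict'' step when $d_x=\Delta$, and the exact gain in the $d_z\ge 3$ reroute --- but both are easily repaired.

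The genuine gap is the $\Delta_2$ half. Your upper bound $\Delta_2\le n-k-1$ is fine, but for the lower bound you only \emph{state} that saturating a top-degree core vertex $v$ should work and defer the verification. The appeal to Lemma~\ref{l1-1} does not help: edge-lifting relocates edges without changing their number, while saturation adds edges, so the lemma is a different operation. The saturation route can in fact be completed, but not for free --- one must show that a single added edge $vv'$ already gives a strict gain whenever the maximum core degree $D\ge 3$, and that in the residual case $D=2$ (which forces the core to be a perfect matching on at least four vertices) the first add is exactly neutral but a second non-neighbour always exists and supplies the strict increase. None of this computation appears in your proposal. The paper sidesteps the aggregate bookkeeping: it fixes one missing core neighbour $x_0$ of $v$ and performs a single local move (either $G-x_0x_1+x_0v+x_1v$ when some $x_1\in N(x_0)\setminus N(v)$ exists, or $G-x_0x_2+x_0v$ with $x_2\in N(x_0)\setminus\{u\}$ otherwise), verifying the strict increase directly in each branch.
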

\begin{proof}
Suppose that $d_{u}=\Delta(G)$ and $d_{v}=\Delta_{2}(G)$. We first proof $\Delta(G)=n-1$.
If $\Delta(G)<n-1$, then there exists one vertex, say $w$, such that $uw\notin E(G)$.

If $N_{G}(w)\nsubseteq N_{G}(u)$, then there exists one vertex, say $z$, such that $wz\in E(G)$ and $uz\notin E(G)$. Let $G^{*}=G-wz+uz+uw$, then $G^{*}\in \mathcal{G}_{n,k}$ and
\begin{align*}
2SO_{1}(G^{*})-2SO_{1}(G)=&  (d_{u}+2)^{2}-d_{w}^{2}+(d_{u}+2)^{2}-d_{z}^{2}-|d_{w}^{2}-d_{z}^{2}|\\
\quad & +\sum_{x\in N_{G}(u)}\{((d_{u}+2)^{2}-d_{x}^{2})-(d_{u}^{2}-d_{x}^{2})\}\\
= &  2(d_{u}+2)^{2}+4d_{u}(d_{u}+1)-d_{z}^{2}-d_{w}^{2}-|d_{w}^{2}-d_{z}^{2}|  \\
\geq & 2(d_{u}+2)^{2}+4d_{u}(d_{u}+1)-2\max\{d_{z}^{2},d_{w}^{2}\}>0. \ \ (\text{Since $d_{u}=\Delta(G)$}) &
\end{align*}
This is a contradiction with that $G$ has the maximum $SO_{1}(G)$.

Next we consider $N_{G}(w)\subseteq N_{G}(u)$.

If $d_{w}\geq d_{x}$ for all $x\in N_{G}(w)$. Let $G^{*}=G+uw$. Since $d_{w}\geq d_{x}\geq 2$ for all $x\in N_{G}(w)$, then $G^{*}\in \mathcal{G}_{n,k}$. Since $d_{u}=\Delta(G)$, then
$2SO_{1}(G^{*})-2SO_{1}(G)=(d_{u}+1)^{2}-(d_{w}+1)^{2}+\sum\limits_{x\in N_{G}(u)}\{((d_{u}+1)^{2}-d_{x}^{2})-(d_{u}^{2}-d_{x}^{2})\}+\sum\limits_{y\in N_{G}(w)}\{((d_{w}+1)^{2}-d_{y}^{2})-(d_{w}^{2}-d_{y}^{2})\}>0$.
This is a contradiction with that $G$ has the maximum $SO_{1}(G)$.

If there exists one vertex $x_{0}\in N_{G}(w)$ such that $d_{w}< d_{x_{0}}$.

If $d_{x_{0}}\geq 3$, let $G^{*}=G-x_{0}w+uw$, then $G^{*}\in \mathcal{G}_{n,k}$. Then
$2SO_{1}(G^{*})-2SO_{1}(G)=(d_{u}+1)^{2}-d_{w}^{2}+\{(d_{u}+1)^{2}-(d_{x_{0}}-1)^{2}\}
-\{d_{u}^{2}-d_{x_{0}}^{2}\}-\{d_{x_{0}}^{2}-d_{w}^{2}\}+\sum\limits_{x\in N_{G}(u)\setminus \{x_{0}\}}\{((d_{u}+1)^{2}-d_{x}^{2})-(d_{u}^{2}-d_{x}^{2})\}\}+\sum\limits_{y\in N_{G}(x_{0})\setminus \{u,w\}}\{(|d_{x_{0}}-1)^{2}-d_{y}^{2}|-|d_{x_{0}}^{2}-d_{y}^{2}|\}\geq 3d_{u}^{2}+3d_{u}+1-2d_{x_{0}}-(d_{x_{0}}-1)^{2}\geq 3d_{u}+1>0$.
This is a contradiction with that $G$ has the maximum $SO_{1}(G)$.

If $d_{x_{0}}=2$, then $d_{w}=1$. Let one of the vertices with maximum degree in $N_{G}(u)\setminus \{x_{0}\}$ be $x_{1}$, then $d_{x_{1}}\geq 2$.
Let $G^{*}=G-x_{0}w+x_{0}x_{1}+uw$, then $G^{*}\in \mathcal{G}_{n,k}$. Then
$2SO_{1}(G^{*})-2SO_{1}(G)=(d_{u}+1)^{2}-1+(d_{u}+1)^{2}-4+\{(d_{u}+1)^{2}-(d_{x_{1}}+1)^{2}\}
+(d_{x_{1}}+1)^{2}-4-(d_{u}^{2}-d_{x_{1}^{2}})-(d_{u}^{2}-4)-(4-1)
+\sum\limits_{x\in N_{G}(u)\setminus \{x_{0},x_{1}\}}\{((d_{u}+1)^{2}-d_{x}^{2})-(d_{u}^{2}-d_{x}^{2})\}\}+\sum\limits_{y\in N_{G}(x_{1})\setminus \{u\}}\{(|d_{x_{1}}+1)^{2}-d_{y}^{2}|-|d_{x_{1}}^{2}-d_{y}^{2}|\}
\geq (d_{u}-2)(2d_{u}+1)-(d_{x_{1}}-1)(2d_{x_{1}}+1)\geq 3d_{u}-6+2d_{u}^{2}+d_{x_{1}}>0$.
This is a contradiction with that $G$ has the maximum $SO_{1}(G)$.

Thus $\Delta(G)=n-1$. Since $k\geq 1$, then the vertex with degree $n-1$ is unique.

Next we proof $\Delta_{2}(G)=n-k-1$.

If $d_{v}=\Delta_{2}(G)<n-k-1$. Let $U=\{x\in V(G)|d_{x}\neq 1, d_{x}\neq n-1\}$. Then $|U|=n-k-1\geq 2$ since $k\leq n-3$.
Then there exists one vertex, say $x_{0}$, such that $vx_{0}\notin E(G)$.

If $N_{G}(x_{0})\nsubseteq N_{G}(v)$, then there exists one vertex, say $x_{1}\notin N_{G}(v)$, such that $x_{0}x_{1}\in E(G)$. Let $G^{*}=G-x_{0}x_{1}+x_{0}v+x_{1}v$, then $G^{*}\in \mathcal{G}_{n,k}$ and
\begin{align*}
2SO_{1}(G^{*})-2SO_{1}(G)=&  (d_{v}+2)^{2}-d_{x_{1}}^{2}+(d_{v}+2)^{2}-d_{x_{0}}^{2}-|d_{x_{0}}^{2}-d_{x_{1}}^{2}|
+\{d_{u}^{2}-(d_{v}+2)^{2}\}\\
\quad & -\{d_{u}^{2}-d_{v}^{2}\}+\sum_{x\in N_{G}(v)\setminus \{u\}}\{((d_{v}+2)^{2}-d_{x}^{2})-(d_{v}^{2}-d_{x}^{2})\}\\
= &  4d_{v}^{2}+4d_{v}-d_{x_{1}}^{2}-d_{x_{0}}^{2}-|d_{x_{0}}^{2}-d_{x_{1}}^{2}|  \\
\geq & 4d_{v}^{2}+4d_{v}-2\max\{d_{x_{0}}^{2},d_{x_{1}}^{2}\}>0. \ \ (\text{Since $d_{v}=\Delta_{2}(G)$}) &
\end{align*}
This is a contradiction with that $G$ has the maximum $SO_{1}(G)$.

If $N_{G}(x_{0})\subseteq N_{G}(v)$, let $x_{2}\in N_{G}(x_{0})\setminus \{u\}$. Let $G^{*}=G-x_{0}x_{2}+x_{0}v$, then $G^{*}\in \mathcal{G}_{n,k}$.
\begin{align*}
2SO_{1}(G^{*})-2SO_{1}(G)=&  (d_{v}+1)^{2}-(d_{x_{2}}-1)^{2}+\{(d_{v}+1)^{2}-d_{x_{0}}^{2}\}-\{d_{v}^{2}-d_{x_{2}}^{2}\}\\
\quad & -|d_{x_{2}}^{2}-d_{x_{0}}^{2}|+\{d_{u}^{2}-(d_{v}+1)^{2}\}-\{d_{u}^{2}-d_{v}^{2}\}
+\{d_{u}^{2}-(d_{x_{2}}-1)^{2}\}\\
\quad & -\{d_{u}^{2}-d_{x_{2}}^{2}\}+\sum_{x\in N_{G}(v)\setminus \{u,x_{2}\}}\{((d_{v}+1)^{2}-d_{x}^{2})-(d_{v}^{2}-d_{x}^{2})\}\\
\quad & +\sum_{y\in N_{G}(x_{2})\setminus \{u,v,x_{0}\}}\{|(d_{x_{2}}-1)^{2}-d_{y}^{2}|-|d_{x_{2}}^{2}-d_{y}^{2}|\}\\
\geq &  3d_{v}^{2}+d_{v}-2d_{x_{2}}^{2}+11d_{x_{2}}-d_{x_{0}}^{2}-4-|d_{x_{2}}^{2}-d_{x_{0}}^{2}|  \\
\geq & d_{v}^{2}-d_{v}-d_{x_{2}}^{2}+11d_{x_{2}}-4+2d_{v}^{2}-2\max\{d_{x_{2}}^{2},d_{x_{0}}^{2}\}>0. &
\end{align*}
This is a contradiction with that $G$ has the maximum $SO_{1}(G)$.

Thus $\Delta_{2}(G)=n-k-1$. This completes the proof.
\end{proof}

Let $H_{n,k,t}$ be the graph obtained by connecting the center of star $S_{k+1}$ and every vertex of complete split graph $CS_{n-k-1,t}$ where $1\leq k\leq n-3$ and $1\leq t\leq n-k-2$.
Further,we propose the following conjecture.
\begin{conjecture}\label{c7-10}
Let $1\leq k\leq n-3$ and $G\in \mathcal{U}_{n,k}$. If $G$ has the maximum $SO_{1}(G)$, then
$G\cong H_{n,k,t}$, where $1\leq t\leq n-k-2$.
\end{conjecture}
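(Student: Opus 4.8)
The plan is to take Theorem~\ref{t7-9} as the point of departure, since it already fixes the two largest degrees of the extremal graph. Let $G\in\mathcal{G}_{n,k}$ be a maximizer of $SO_1$. By Theorem~\ref{t7-9} there is a unique vertex $u$ with $d_u=n-1$, so $u$ is adjacent to every other vertex; the $k$ pendent vertices are then exactly those adjacent only to $u$, and the remaining $m:=n-k-1$ vertices have degree at least $2$ and induce a subgraph $H$ on $m$ vertices in which (by $\Delta_2=n-k-1$) at least one vertex dominates the rest. For $w\in V(H)$ we have $d_G(w)=d_H(w)+1$, and collecting the edge contributions of $u$ to the pendants, of $u$ to $V(H)$, and of the edges inside $H$ gives
\begin{align*}
2SO_1(G)=(n-1)^3-k-\sum_{w\in V(H)}d_G(w)^2+\sum_{ww'\in E(H)}\bigl|d_G(w)^2-d_G(w')^2\bigr|.
\end{align*}
Hence, with $u$ and the pendants fixed, maximizing $SO_1(G)$ is equivalent to maximizing $F(H)=-\sum_{w}d_G(w)^2+\sum_{ww'\in E(H)}|d_G(w)^2-d_G(w')^2|$ over all graphs $H$ on $m$ vertices, where the edge weights themselves depend on the degrees of $H$.

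The next step is to show that the optimizer of $F$ is a complete split graph $CS_{m,t}$, which, together with the universal vertex and the $k$ pendants, reconstructs $H_{n,k,t}$. I would argue this by edge switching inside $H$, in the same spirit as the transformations $G-xy+xz$ used in the proof of Theorem~\ref{t7-9}, in two stages. First, (i) I would show that $H$ contains no induced $2K_2$, $P_4$ or $C_4$, i.e.\ that $H$ is a threshold graph: whenever one of these configurations appears, a local switch on the relevant four vertices strictly increases $F(H)$. Second, (ii) among threshold graphs on $m$ vertices, presented through their creation sequence of dominating and isolated vertices, I would pin down the extremal configuration as a clique of size $t$ completely joined to an independent set of size $m-t$, namely $CS_{m,t}$. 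Step~(i) is the structural heart, since threshold graphs are precisely the graphs avoiding $\{2K_2,P_4,C_4\}$ and have totally ordered neighbourhoods, so these three exclusions already force a highly nested, nearly split structure.

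The main obstacle—and the reason this remains a conjecture rather than a theorem—is twofold. First, the edge weights $|d_G(w)^2-d_G(w')^2|$ depend on the degree sequence of $H$ itself, so a single switch alters not only the two edges it touches but the weights of every edge incident to the four endpoints; the absolute values destroy the clean convexity that would otherwise make such switching arguments routine, and one is forced into a sign analysis of $F(H^{*})-F(H)$ that branches on the relative order of the perturbed degrees. Second, even granting that the optimum is a complete split graph, fixing the extremal clique size $t$—or showing that the whole range $1\le t\le n-k-2$ can be optimal, as the conjecture permits—requires optimizing the one-variable expression for $2SO_1(H_{n,k,t})$ in $t$, whose dependence on $n$ and $k$ need not be obviously unimodal. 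I would therefore first settle the extreme cases $k=1$ and $k=n-3$ to calibrate the expected value of $t$, then attempt step~(i), handling the degree-dependent weights by grouping the edges affected by each switch according to whether the moved endpoints lie above or below the other vertices in the degree order.
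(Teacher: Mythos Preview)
The statement under review is a \emph{conjecture} in the paper, not a theorem: the paper offers no proof, only the preceding Theorem~\ref{t7-9} as supporting evidence. There is therefore no ``paper's own proof'' to compare against. Your proposal is explicitly a plan rather than a proof, and you say so yourself; as such it should be assessed as a strategy.

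Your reduction is correct. With $u$ the unique universal vertex guaranteed by Theorem~\ref{t7-9}, the $k$ pendants are forced to be the vertices adjacent only to $u$, the remaining $m=n-k-1$ vertices span a graph $H$ in which $d_G(w)=d_H(w)+1$, and the identity
\[
2SO_1(G)=(n-1)^3-k-\sum_{w\in V(H)}d_G(w)^2+\sum_{ww'\in E(H)}\bigl|d_G(w)^2-d_G(w')^2\bigr|
\]
is exactly right, including the observation that $\Delta_2=n-k-1$ forces $H$ to contain a dominating vertex. Reducing to a threshold/split structure on $H$ via forbidden $\{2K_2,P_4,C_4\}$ is a sensible line of attack.

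Two remarks on the obstacles you flag. First, your second obstacle is lighter than you suggest: the conjecture only asserts $G\cong H_{n,k,t}$ for \emph{some} $t$ in the stated range, so pinning down the exact $t$ (or the unimodality in $t$) is not required. Second, your first obstacle is the genuine one, and it is more serious than a ``sign analysis that branches'': a switch inside $H$ changes the degrees of up to four vertices, which simultaneously perturbs the $-\sum d_G(w)^2$ term \emph{and} the weights on every edge of $H$ incident to those vertices, not just the edges you switched. Because the negative quadratic term rewards \emph{lowering} degrees while the absolute-value term rewards \emph{spreading} them, the two pull in opposite directions, and a purely local four-vertex switch need not be monotone for $F$. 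Any successful argument will likely have to exploit the global constraint that $H$ already contains a dominating vertex (from $\Delta_2=m$), which severely restricts the possible $P_4$ and $C_4$ configurations and may make the case analysis finite. As it stands, the proposal is a coherent programme but not a proof, which is consistent with the paper leaving the statement as a conjecture.
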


\section{Regression models and chemical applicability}
\hskip 0.6cm
In this section, the linear regression models of Sombor-index-like invariants and boiling points (BP) of 21 benzenoid hydrocarbons are presented. The linear regression models of Sombor-index-like invariants and AcenFac (resp. Entropy, SNar, HNar, HVAP, DHVAP) of 18 octane isomers are also presented. Note that the relationship between AcenFac (resp. Entropy, SNar, HNar, HVAP, DHVAP) of octane isomers and $SO$ can also be find in \cite{absf2021,dengt2021,liyh2023,redz2021}. The relationship between boiling points (BP) of benzenoid hydrocarbons and $SO$ can also be find in \cite{lich2021,liyh2023}. For the sake of uniformity, we did not delete the relevant content about Sombor index. We also corrected some data in these papers.

The 21 benzenoid hydrocarbons and 18 octane isomers we considered are shown in Figure \ref{fig-71} and Figure \ref{fig-72}.
The experimental values of boiling points of benzenoid hydrocarbons of Table 1 are taken from \cite{mila2021}.
The experimental values of AcenFac (resp. Entropy, SNar, HNar, HVAP, DHVAP) of octane isomers of Table 2 are taken from \cite{dengt2021} and \cite{mila2021}.

With the data of Table \ref{table2}, scatter plots between BP and $SO$ (resp. $SO_{1}$, $SO_{2}$, $SO_{3}$, $SO_{4}$, $SO_{5}$, $SO_{6}$) are shown in Figure \ref{fig-73}.
The mathematical relationship-related coefficient ($R$) between boiling points and $SO$ (resp. $SO_{1}$, $SO_{2}$, $SO_{3}$, $SO_{4}$, $SO_{5}$, $SO_{6}$) is about 0.9929 (resp. 0.7905, 0.7905, 0.9930, 0.9874, 0.7905, 0.7905), and
$$BP= 5.099\times SO(G)+57.41,$$
$$BP= 13.54\times SO_{1}(G)+164.1,\quad  BP= 88\times SO_{2}(G)+164.1,$$
$$BP= 1.612\times SO_{3}(G)+56.75,\quad  BP= 1.618\times SO_{4}(G)+92.86,$$
$$BP= 9.293\times SO_{5}(G)+164.1,\quad  BP= 32.06\times SO_{6}(G)+164.1.$$

On the other hand, with the data of Table \ref{table1}, scatter plots between AcenFac (resp. Entropy, SNar, HNar, HVAP, DHVAP) of octane isomers and $SO$ (resp. $SO_{1}$, $SO_{2}$, $SO_{3}$, $SO_{4}$, $SO_{5}$, $SO_{6}$) are shown in Figure \ref{fig-74}-\ref{fig-79}.

The absolute value of correlation coefficient $|R|$ between $SO$ and AcenFac (resp. Entropy, SNar, HNar, HVAP, DHVAP) is about 0.9594 (resp. 0.9465, 0.9842, 0.9618, 0.9031, 0.9469), and
$$AcenFac= -0.01171\times SO(G)+0.6093,\quad  Entropy= -1.473\times SO(G)+139.8,$$
$$SNar= -0.1129\times SO(G)+6.169,\quad  HNar= -0.02947\times SO(G)+2.096.$$
$$HVAP= -0.6303\times SO(G)+83.88,\quad  DHVAP= -0.125\times SO(G)+12.05.$$

The absolute value of correlation coefficient $|R|$ between $SO_{1}$ and AcenFac (resp. Entropy, SNar, HNar, HVAP, DHVAP) is about 0.9192 (resp. 0.8898, 0.9311, 0.9098, 0.9174, 0.9522), and
$$AcenFac= -0.0002884\times SO_{1}(G)+0.4003,\quad  Entropy= -0.3558\times SO_{1}(G)+113.4,$$
$$SNar= -0.02746\times SO_{1}(G)+4.146,\quad  HNar= -0.007165\times SO_{1}(G)+1.568.$$
$$HVAP= -0.1645\times SO_{1}(G)+72.85,\quad  DHVAP= -0.0323\times SO_{1}(G)+9.85.$$

The absolute value of correlation coefficient $|R|$ between $SO_{2}$ and AcenFac (resp. Entropy, SNar, HNar, HVAP, DHVAP) is about 0.9202 (resp. 0.8433, 0.9355, 0.9512, 0.9198, 0.9560), and
$$AcenFac= -0.03143\times SO_{2}(G)+0.4536,\quad  Entropy= -3.671\times SO_{2}(G)+119.2,$$
$$SNar= -0.3003\times SO_{2}(G)+4.658,\quad  HNar= -0.08155\times SO_{2}(G)+1.714.$$
$$HVAP= -1.796\times SO_{2}(G)+75.9,\quad  DHVAP= -0.3531\times SO_{2}(G)+10.45.$$

The absolute value of correlation coefficient $|R|$ between $SO_{3}$ and AcenFac (resp. Entropy, SNar, HNar, HVAP, DHVAP) is about 0.9482 (resp. 0.9399, 0.9791, 0.9551, 0.9112, 0.9509), and
$$AcenFac= -0.0002752\times SO_{3}(G)+0.5523,\quad  Entropy= -0.3477\times SO_{3}(G)+132.8,$$
$$SNar= -0.02671\times SO_{3}(G)+5.633,\quad  HNar= -0.006959\times SO_{3}(G)+1.955.$$
$$HVAP= -0.1512\times SO_{3}(G)+81.06,\quad  DHVAP= -0.02985\times SO_{3}(G)+11.48.$$

The absolute value of correlation coefficient $|R|$ between $SO_{4}$ and AcenFac (resp. Entropy, SNar, HNar, HVAP, DHVAP) is about 0.9466 (resp. 0.9422, 0.9634, 0.9318, 0.8837, 0.93), and
$$AcenFac= -0.001418\times SO_{4}(G)+0.4417,\quad  Entropy= -0.1799\times SO_{4}(G)+118.8,$$
$$SNar= -0.01357\times SO_{4}(G)+4.544,\quad  HNar= -0.003504\times SO_{4}(G)+1.669.$$
$$HVAP= -0.07568\times SO_{4}(G)+74.81,\quad  DHVAP= -0.01507\times SO_{4}(G)+10.25.$$

The absolute value of correlation coefficient $|R|$ between $SO_{5}$ and AcenFac (resp. Entropy, SNar, HNar, HVAP, DHVAP) is about 0.9303 (resp. 0.8890, 0.9501, 0.9402, 0.9393, 0.9726), and
$$AcenFac= -0.002446\times SO_{5}(G)+0.4163,\quad  Entropy= -0.2979\times SO_{5}(G)+115.2,$$
$$SNar= -0.02348\times SO_{5}(G)+4.304,\quad  HNar= -0.006205\times SO_{5}(G)+1.612.$$
$$HVAP= -0.1412\times SO_{5}(G)+73.81,\quad  DHVAP= -0.02765\times SO_{5}(G)+10.04.$$

The absolute value of correlation coefficient $|R|$ between $SO_{6}$ and AcenFac (resp. Entropy, SNar, HNar, HVAP, DHVAP) is about 0.9029 (resp. 0.9043, 0.9295, 0.8959, 0.9053, 0.9379), and
$$AcenFac= -0.002724\times SO_{6}(G)+0.3845,\quad  Entropy= -0.3477\times SO_{6}(G)+111.6,$$
$$SNar= -0.02635\times SO_{6}(G)+4.003,\quad  HNar= -0.006783\times SO_{6}(G)+1.529.$$
$$HVAP= -0.1561\times SO_{6}(G)+71.96,\quad  DHVAP= -0.03059\times SO_{6}(G)+9.674.$$

\begin{figure}[ht!]
  \centering
  \scalebox{.15}[.15]{\includegraphics{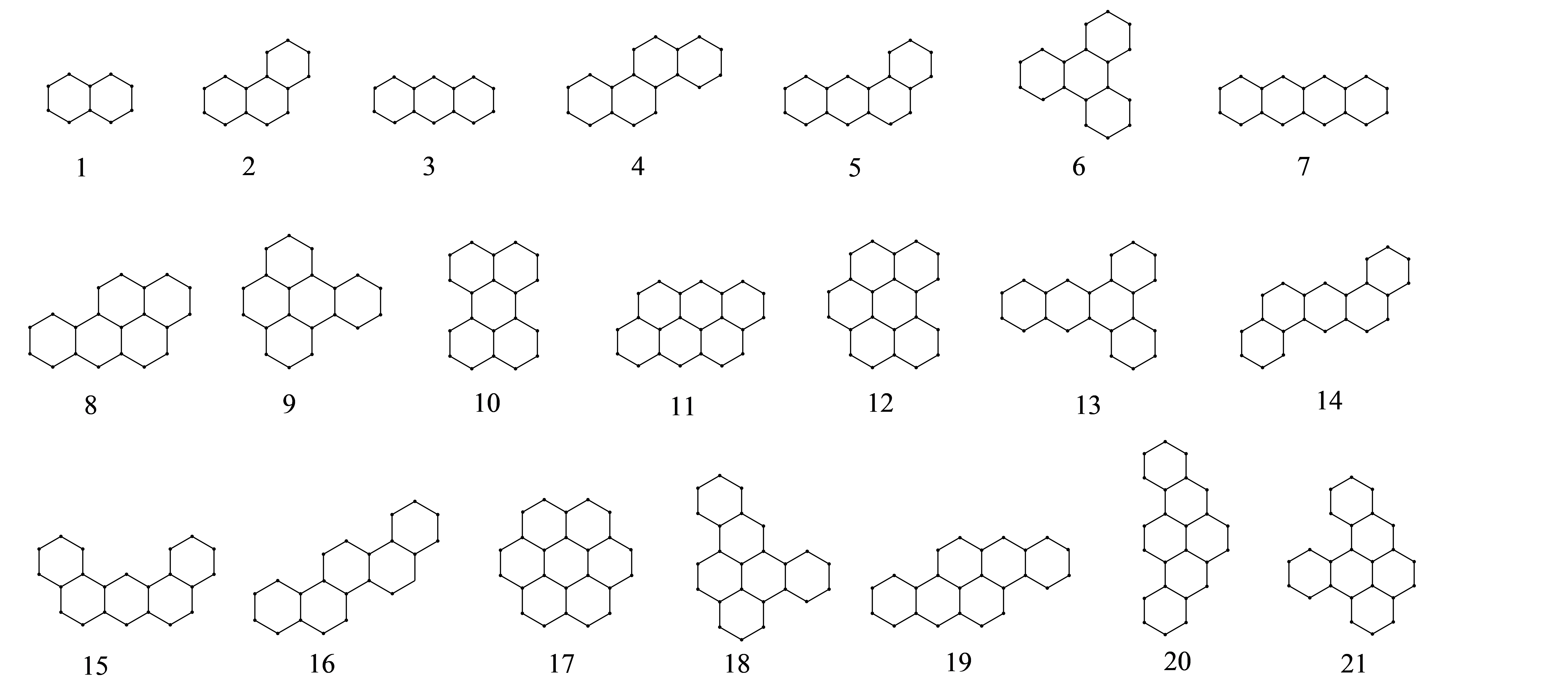}}
  \caption{21 benzenoid hydrocarbons.}
 \label{fig-71}
\end{figure}

\begin{figure}[ht!]
  \centering
  \scalebox{.16}[.16]{\includegraphics{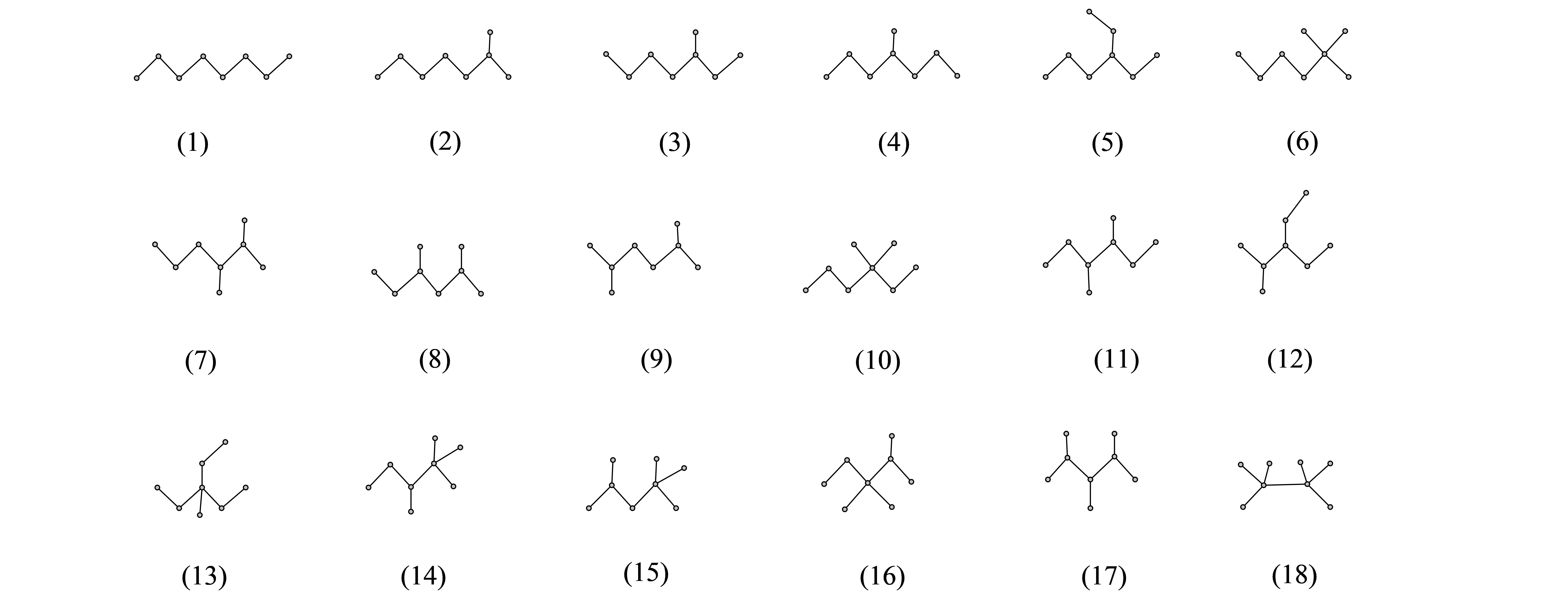}}
  \caption{18 octane isomers.}
 \label{fig-72}
\end{figure}

\begin{sidewaystable}[h]
	\centering
    \caption{Experimental values of AcenFac (resp. Entropy, SNar, HNar, HVAP, DHVAP), and Sombor-index-like invariants of $18$ octane isomers in Figure \ref{fig-72}}
    \vskip 0.2cm
    \setlength{\tabcolsep}{2mm}{
	\begin{tabular}{c|ccccccccccccc}\hline
	  No.     &  $AcenFac$  & $Entropy$  &	$SNar$  &	$HNar$  &	$HVAP$  &	$DHVAP$  &	$SO(G)$  &	$SO_{1}(G)$  &	$SO_{2}(G)$  &	$SO_{3}(G)$  &	$SO_{4}(G)$  &	$SO_{5}(G)$  &	$SO_{6}(G)$\\  \hline

	$1$    &  $0.397898$  &    $111.67$  &    $4.159$  &  $1.6$  &    $73.19$  &  $9.915$  &    $18.6143$  &     $3$ &    $1.2$  &    $59.2384$  &    $40.1426$ &    $6.4045$  &    $1.63204$ \\  \hline

    $2$    &  $0.377916$  &    $109.84$  &	  $3.871$  &  $1.5$  & $70.3$  &  $9.484$  &    $20.6515$  &    $12$ &    $2.5846$  &    $67.8280$  &    $53.4664$ &    $19.8351$  &    $8.58625$   \\ \hline

    $3$    &  $0.371002$  &    $111.26$  &	  $3.871$  &  $1.5$  &	  $71.3$  &  $9.521$ &    $20.5024$  &    $12$ &    $2.7692$  &    $66.7913$  &    $52.3477$ &    $20.1844$  &    $7.1007$  \\ \hline

    $4$    &  $0.371504$  &    $109.32$  &	  $3.871$  &  $1.5$  &	  $70.91$  &  $9.483$ &    $20.5024$  &    $12$ &    $2.7692$  &    $66.7913$  &    $52.3477$ &    $20.1844$  &    $7.1007$  \\ \hline

    $5$    &  $0.362472$  &    $109.43$  &	  $3.871$  &  $1.5$  &	  $71.7$  &  $9.476$ &    $20.3533$  &    $12$ &    $2.9538$  &    $65.7547$  &    $51.2289$ &    $20.5336$  &    $5.61515$  \\ \hline

    $6$    &  $0.339426$  &    $103.42$  &	  $3.466$  &  $1.391$  &	$67.7$  &  $8.915$ &   $24.7344$ &    $30$ &    $3.8471$  &    $85.3034$  &    $88.8582$ &    $39.7493$  &    $27.755$  \\ \hline

    $7$    &  $0.348247$  &    $108.02$  &	  $3.584$  &  $1.412$  &	$70.2$  &  $9.272$ &   $22.3995$ &    $16$ &    $3.3846$  &    $74.4923$  &    $64.8547$ &    $26.3304$  &    $11.9435$  \\ \hline

    $8$    &  $0.344223$  &    $106.98$  &	  $3.584$  &  $1.412$  &	$68.5$  &  $9.029$ &   $22.5396$ &    $21$ &    $4.1538$  &    $75.3809$  &    $65.6715$ &    $33.615$  &    $14.0549$  \\ \hline

    $9$    &  $0.356830$  &    $105.72$  &	  $3.584$  &  $1.412$  &	$68.6$  &  $9.051$ &   $22.6886$ &    $21$ &    $3.9692$  &    $76.4176$  &    $66.7903$ &    $33.2657$  &    $15.5405$  \\ \hline

    $10$   &  $0.322596$  &    $104.74$  &	  $3.466$  &  $1.391$  &	$68.5$  &  $8.973$ &   $24.4910$ &    $30$ &    $4.1647$  &    $83.5262$  &    $86.2332$ &    $40.4744$  &    $25.2129$  \\ \hline

    $11$   &  $0.340345$  &    $106.59$  &	  $3.584$  &  $1.412$  &	$70.2$  &  $9.316$ &   $22.2504$ &    $16$ &    $3.5692$  &    $73.4557$  &    $63.7359$ &    $26.6796$  &    $10.458$  \\ \hline

    $12$   &  $0.332433$  &    $106.06$  &	  $3.584$  &  $1.412$  &	$69.7$  &  $9.209$ &   $22.2504$ &    $16$ &    $3.5692$  &    $73.4557$  &    $63.7359$ &    $26.6796$  &    $10.458$  \\ \hline

    $13$   &  $0.306899$  &    $101.48$  &	  $3.466$  &  $1.391$  &	$69.3$  &  $9.081$ &   $24.2477$ &    $30$ &    $4.4823$  &    $81.7490$  &    $83.6083$ &    $41.1995$  &    $22.6709$  \\ \hline

    $14$   &  $0.300816$  &    $101.31$  &	  $3.178$  &  $1.315$  &	$67.3$  &  $8.826$ &   $26.3732$ &    $34$ &    $4.7116$  &    $91.2484$  &    $99.3103$ &    $46.4613$  &    $29.1333$  \\ \hline

    $15$   &  $0.305370$  &    $104.09$  &	  $3.178$  &  $1.315$  &	$64.87$  &  $8.402$ &   $26.7716$ &    $39$ &    $5.2316$  &    $93.8929$  &    $102.182$ &    $53.18$  &    $34.7092$  \\ \hline

    $16$   &  $0.293177$  &    $102.06$  &	  $3.178$  &  $1.315$  &	$68.1$  &  $8.897$ &   $26.2790$ &    $34$ &    $4.8447$  &    $90.5079$  &    $97.804$ &    $46.8371$  &    $28.0768$  \\ \hline

    $17$   &  $0.317422$  &    $102.39$  &	  $3.296$  &  $1.333$  &	$68.37$  &  $9.014$ &   $24.2967$ &    $20$ &    $4$  &    $82.1933$  &    $77.3617$ &    $32.4764$  &    $16.7863$  \\ \hline

    $18$   &  $0.255294$  &    $93.06$  &	  $2.773$  &  $1.231$  &	$66.2$  &  $8.41$ &   $30.3955$ &    $45$ &    $5.2941$  &    $108.406$   &    $134.083$ &    $58.5364$  &    $45.4455$   \\ \hline

	\end{tabular}}
	
	\label{table1}
\end{sidewaystable}

\begin{sidewaystable}[h]
	\centering
    \caption{Experimental values of boiling point and Sombor-index-like invariants of $21$ benzenoid hydrocarbons in Figure \ref{fig-71}}
    \setlength{\tabcolsep}{3mm}{
	\begin{tabular}{c|cccccccc}\hline
	  No.     &  $BP(^{o}C)$  & $SO(G)$  &	$SO_{1}(G)$  &	$SO_{2}(G)$ &	$SO_{3}(G)$  &	$SO_{4}(G)$  &	$SO_{5}(G)$ &	$SO_{6}(G)$   \\  \hline

	$1$    &  $218$  &    $35.6354$  &    $10$  &    $1.53846$  &    $112.849$  &    $94.311$  &    $14.5692$  &    $4.22279$\\  \hline

    $2$    &  $338$  &    $54.1602$  &	  $15$  &    $2.30769$  &    $171.495$  &    $150.105$  &    $21.8538$  &    $6.33419$\\ \hline

    $3$    &  $340$  &    $54.3003$  &	  $20$  &    $3.07692$  &    $172.384$  &    $150.922$  &    $29.1383$  &    $8.44559$\\ \hline

    $4$    &  $431$  &    $72.6850$  &	  $20$  &    $3.07692$  &    $230.141$  &    $205.900$  &    $29.1383$  &    $8.44559$\\ \hline

    $5$    &  $425$  &    $72.8251$  &	  $25$  &    $3.84615$  &    $231.03$  &     $206.717$  &    $36.4229$  &    $10.557$\\ \hline

    $6$    &  $429$  &    $72.5450$  &	  $15$  &    $2.30769$  &    $229.253$  &    $205.083$  &    $21.8538$  &    $6.33419$\\ \hline

    $7$    &  $440$  &    $72.9651$  &	  $30$  &    $4.61538$  &    $231.918$  &    $207.534$  &    $43.7075$  &    $12.6684$\\ \hline

    $8$    &  $496$  &    $85.5530$  &	  $25$  &    $3.84615$  &    $271.016$  &    $249.128$  &    $36.4229$  &    $10.557$\\ \hline

    $9$    &  $493$  &    $85.4130$  &	  $20$  &    $3.07692$  &    $270.127$  &    $248.311$  &    $29.1383$  &    $8.44559$\\ \hline

    $10$   &  $497$  &    $85.4130$  &	  $20$  &    $3.07692$  &    $270.127$  &    $248.311$  &    $29.1383$  &    $8.44559$\\ \hline

    $11$   &  $547$  &    $98.4209$  &	  $30$  &    $4.61538$  &    $311.89$  &     $292.357$  &    $43.7075$  &    $12.6684$\\ \hline

    $12$   &  $542$  &    $98.2809$  &	  $25$  &    $3.84615$  &    $311.002$  &    $291.540$  &    $36.4229$  &    $10.557$\\ \hline

    $13$   &  $535$  &    $91.2098$  &	  $25$  &    $3.84615$  &    $288.787$  &    $261.695$  &    $36.4229$  &    $10.557$\\ \hline

    $14$   &  $536$  &    $91.3499$  &	  $30$  &    $4.61538$  &    $289.676$  &    $262.511$  &    $43.7075$  &    $12.6684$\\ \hline

    $15$   &  $531$  &    $91.3499$  &	  $30$  &    $4.61538$  &    $289.676$  &    $262.511$  &    $43.7075$  &    $12.6684$\\ \hline

    $16$   &  $519$  &    $91.2098$  &	  $25$  &    $3.84615$  &    $288.787$  &    $261.695$  &    $36.4229$  &    $10.557$\\ \hline

    $17$   &  $590$  &    $111.149$  &	  $30$  &    $4.61538$  &    $351.876$  &    $334.768$  &    $43.7075$  &    $12.6684$\\ \hline

    $18$   &  $592$  &    $103.938$  &	  $25$  &    $3.84615$  &    $328.773$  &    $304.106$  &    $36.4229$  &    $10.557$\\ \hline

    $19$   &  $596$  &    $104.078$  &	  $30$  &    $4.61538$  &    $329.662$  &    $304.923$  &    $43.7075$  &    $12.6684$\\ \hline

    $20$   &  $594$  &    $104.078$  &	  $30$  &    $4.61538$  &    $329.662$  &    $304.923$  &    $43.7075$  &    $12.6684$\\ \hline

    $21$   &  $595$  &    $103.938$  &	  $25$  &    $3.84615$  &    $328.773$  &    $304.106$  &    $36.4229$  &    $10.557$\\ \hline

	\end{tabular}}
	
	\label{table2}
\end{sidewaystable}

\begin{figure}[ht!]
  \centering
  \scalebox{.08}[.08]{\includegraphics{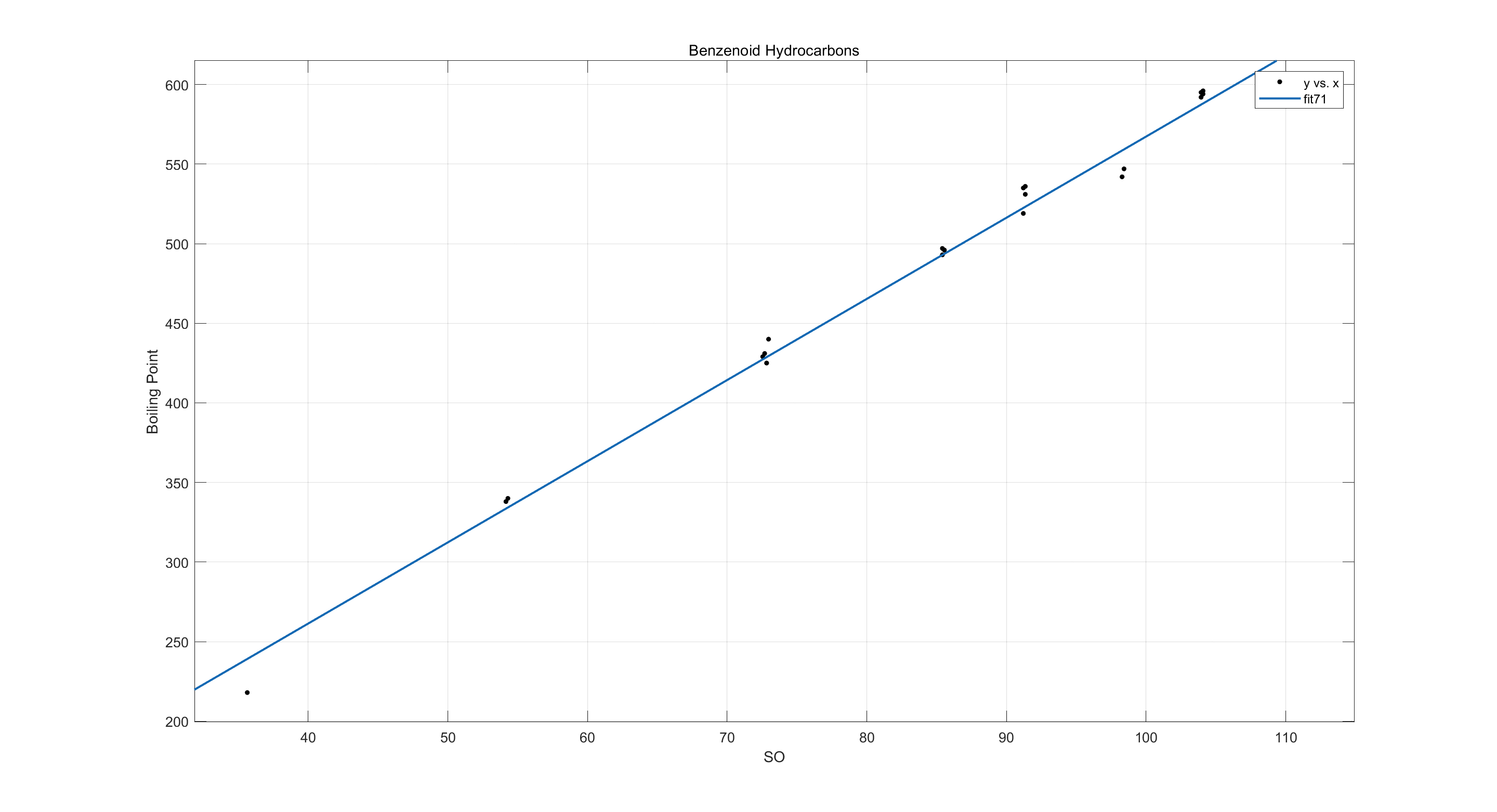}}
  \scalebox{.08}[.08]{\includegraphics{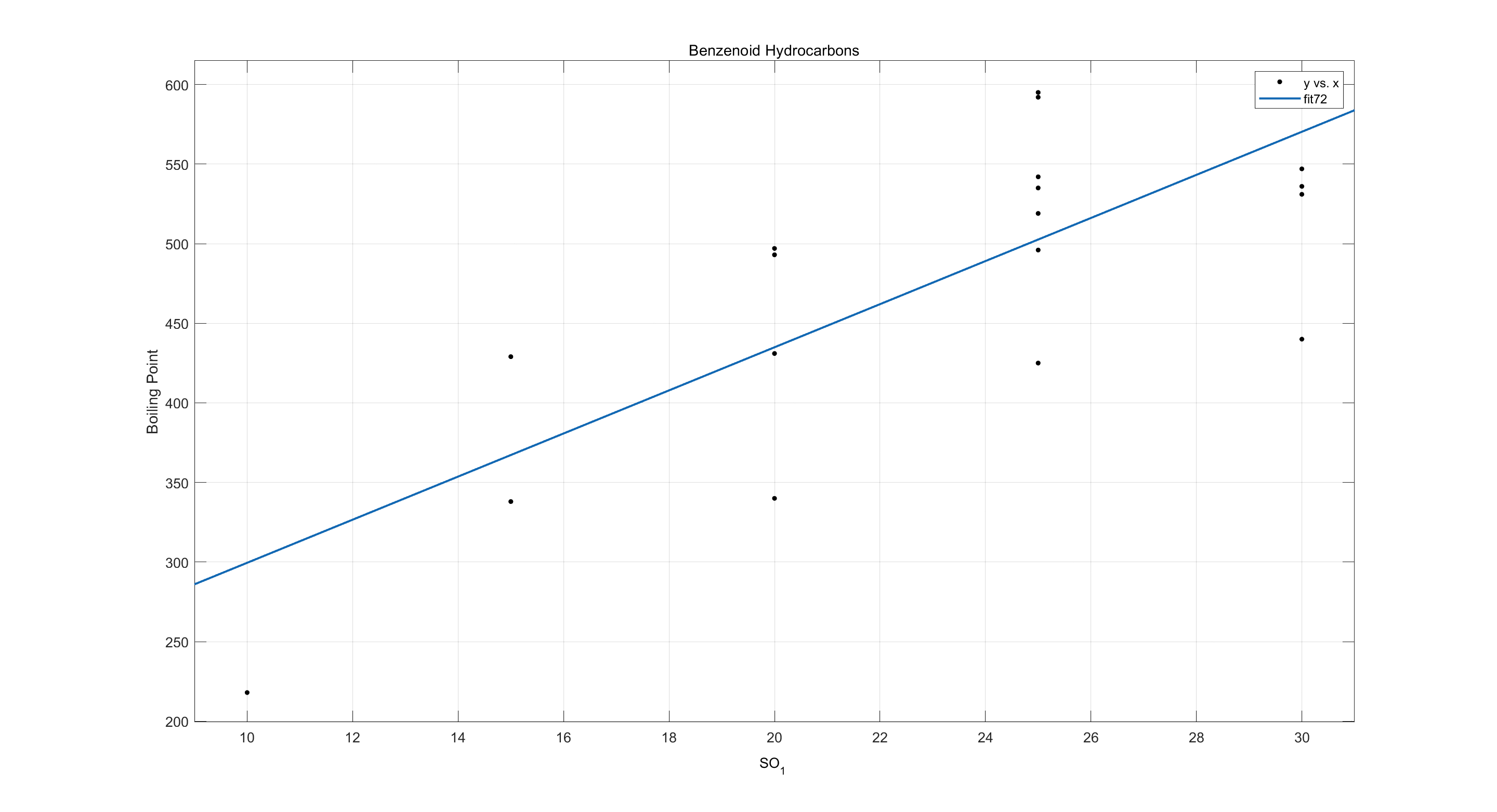}}
  \scalebox{.08}[.08]{\includegraphics{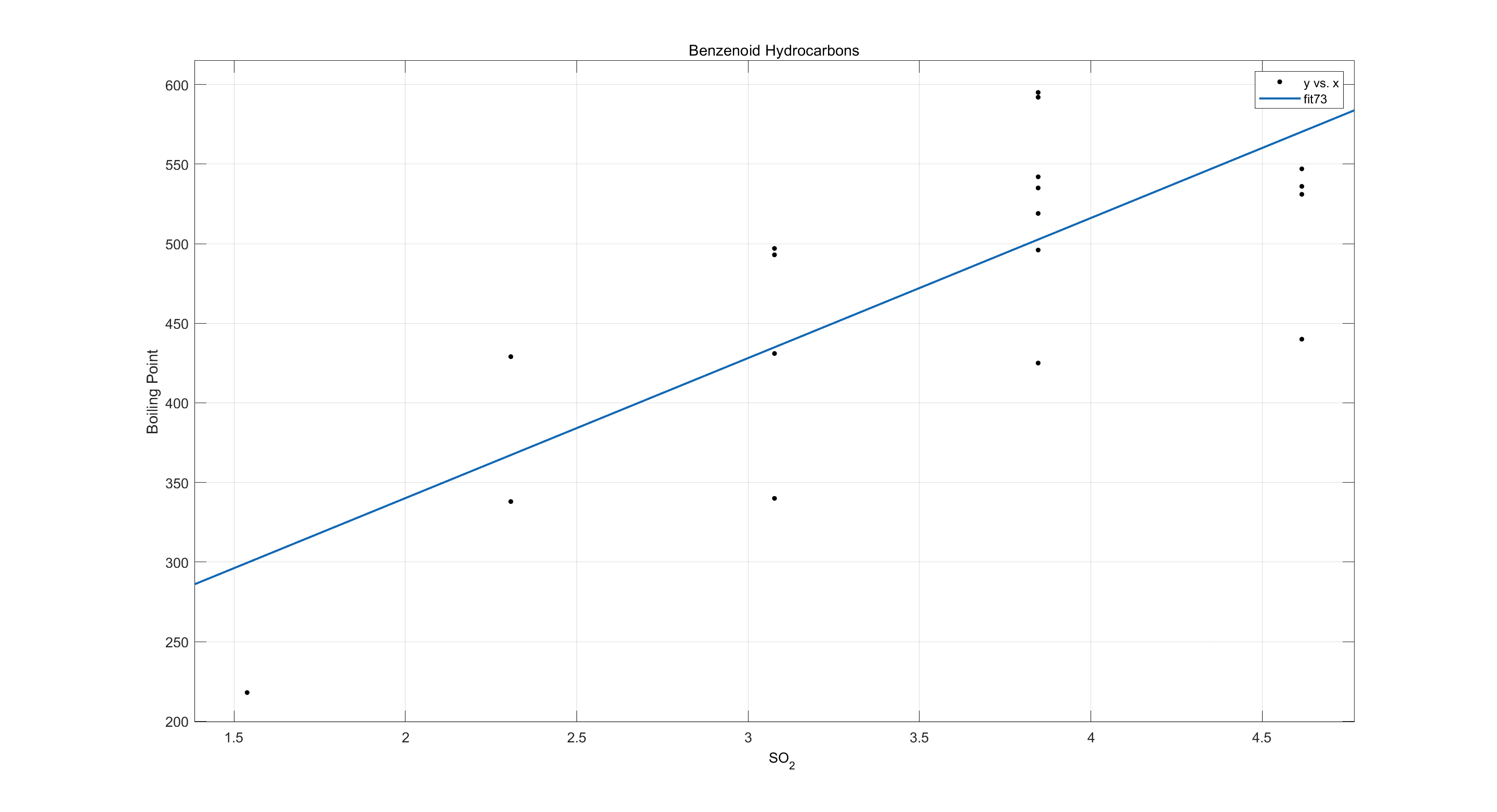}}
  \scalebox{.08}[.08]{\includegraphics{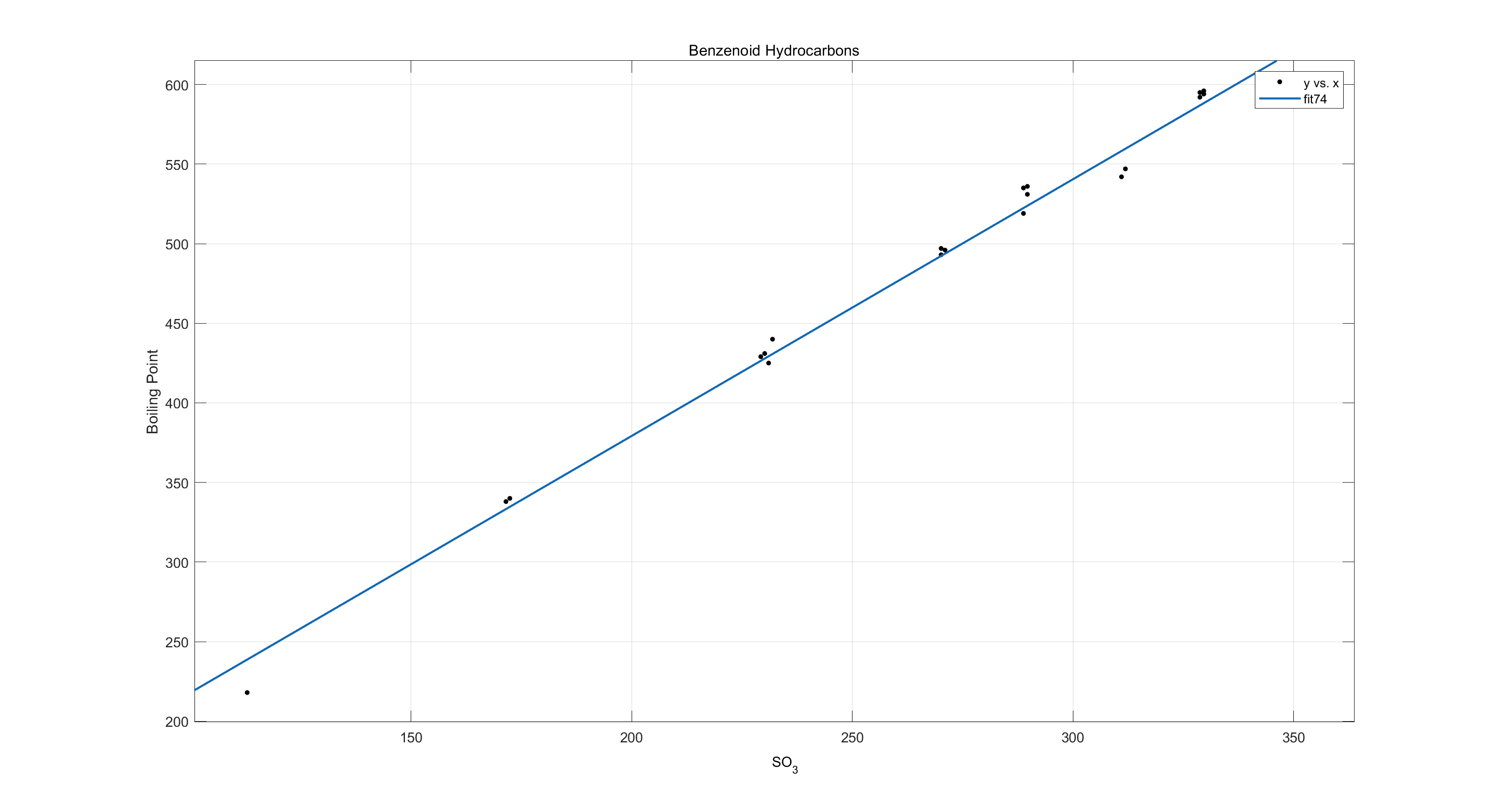}}
  \scalebox{.08}[.08]{\includegraphics{fit74.png}}
  \scalebox{.08}[.08]{\includegraphics{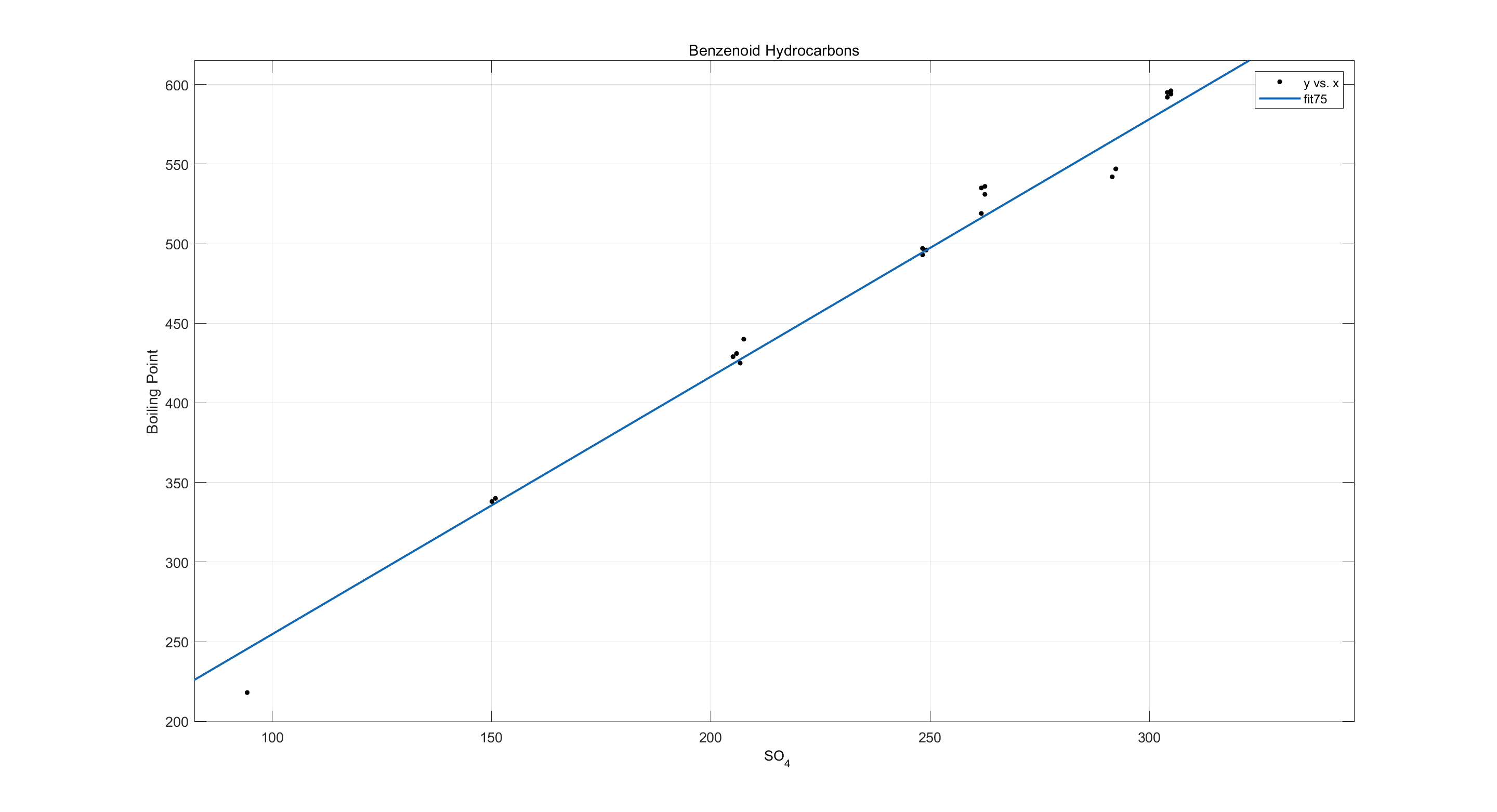}}
  \scalebox{.08}[.08]{\includegraphics{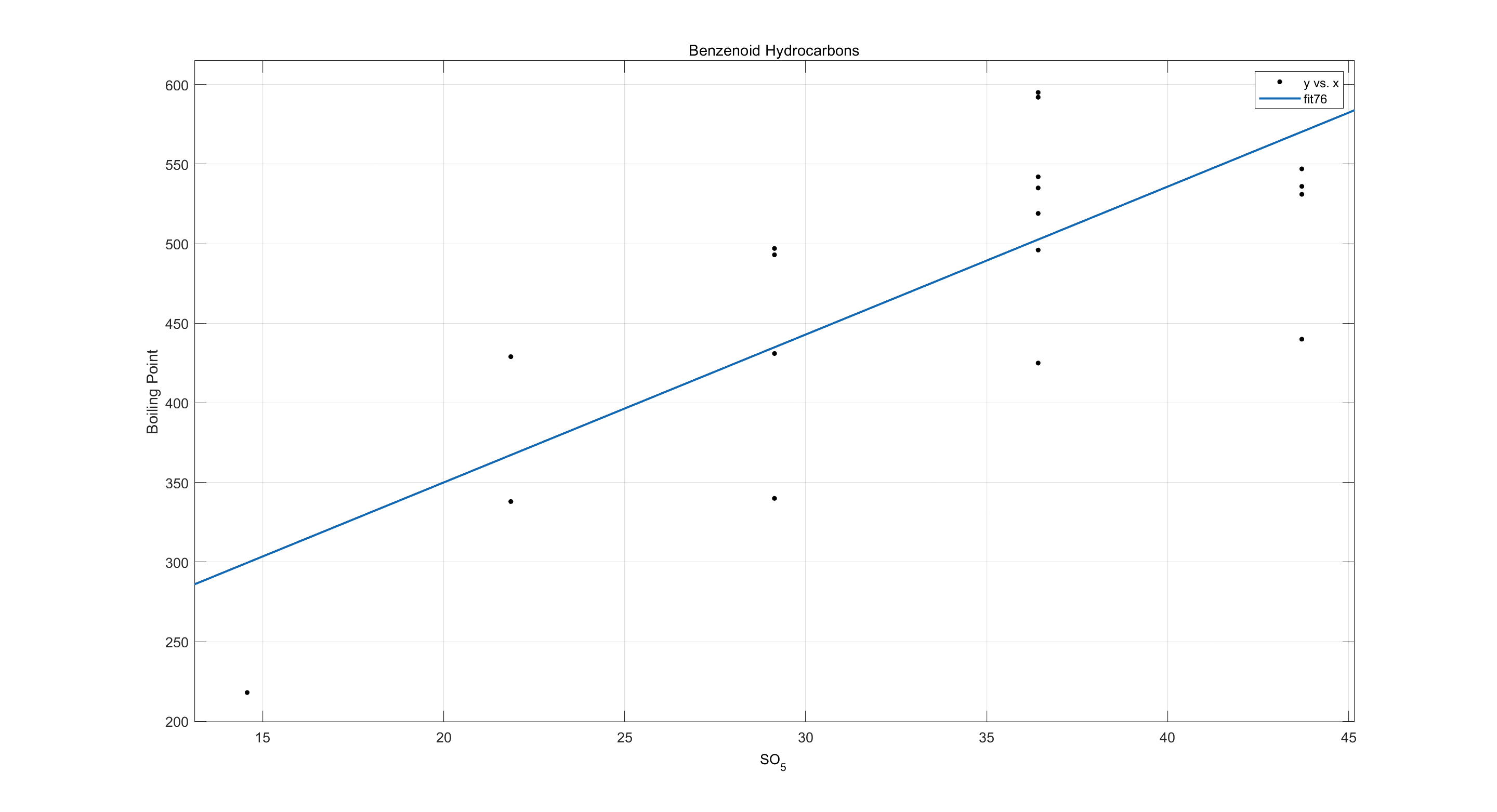}}
  \caption{Scatter plot between BP of benzenoid hydrocarbons and $SO$ (resp. $SO_{i}$ where $1\leq i\leq 6$).}
 \label{fig-73}
\end{figure}

\begin{figure}[ht!]
  \centering
  \scalebox{.08}[.08]{\includegraphics{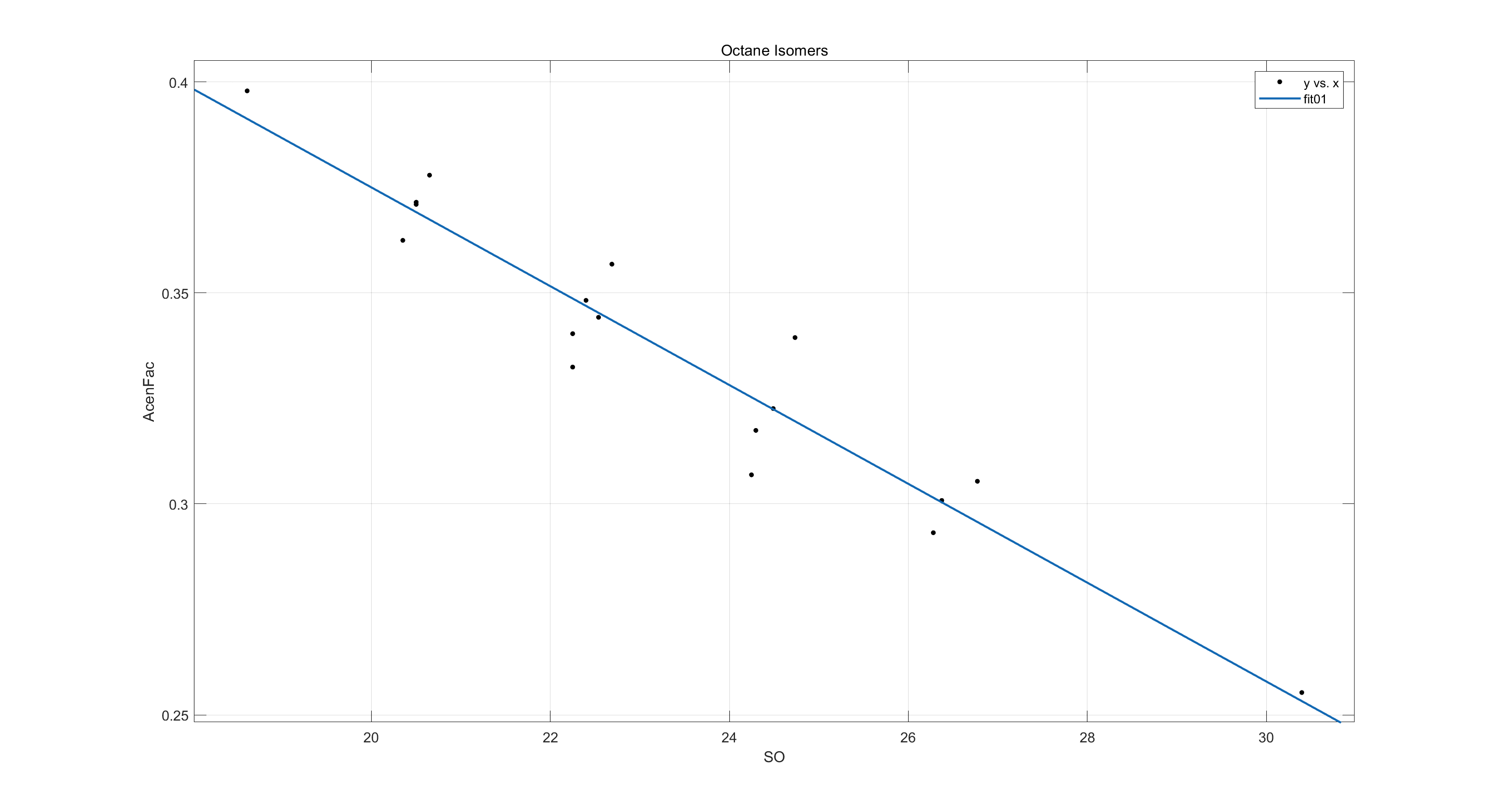}}
  \scalebox{.08}[.08]{\includegraphics{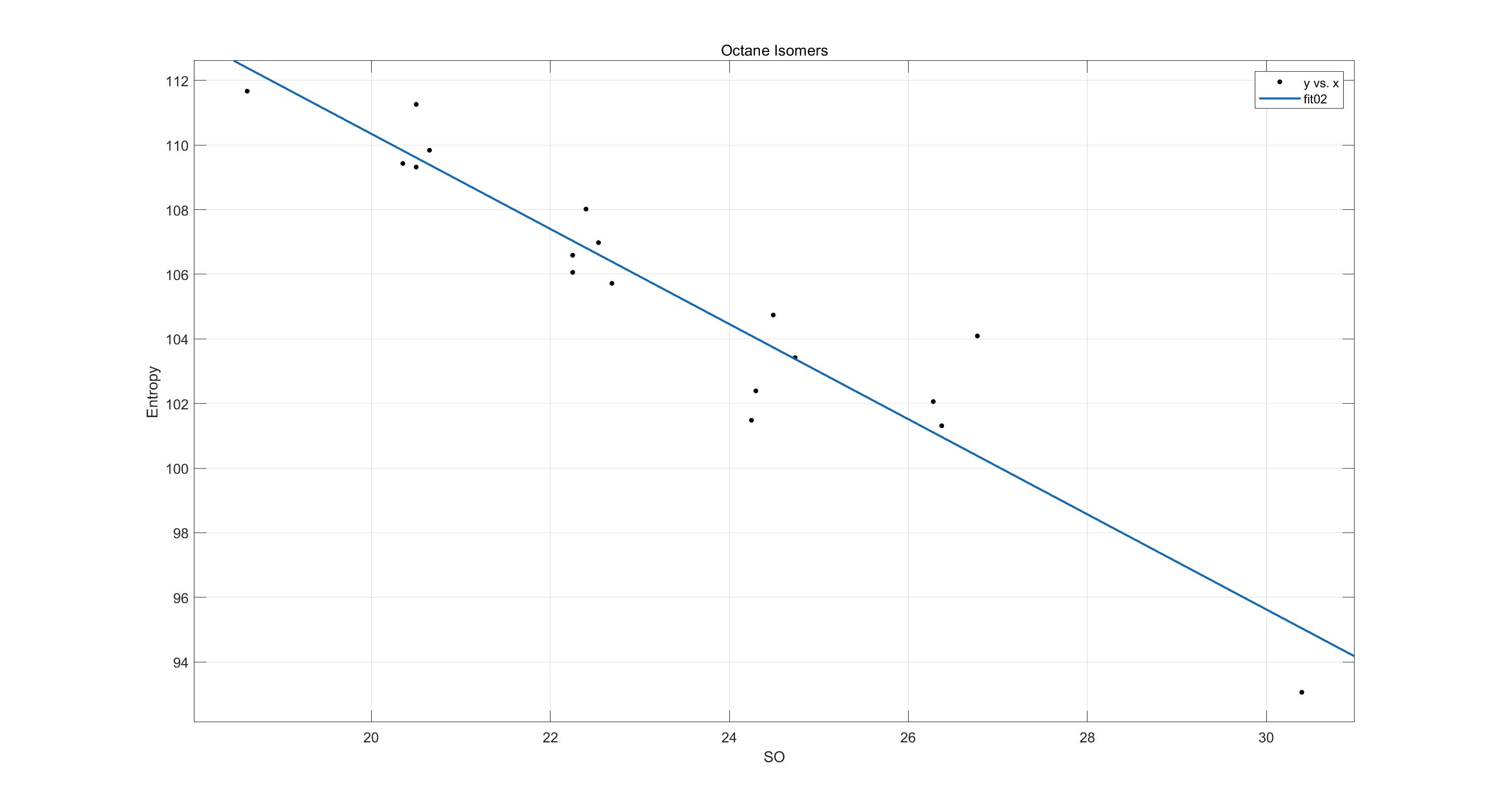}}
  \scalebox{.08}[.08]{\includegraphics{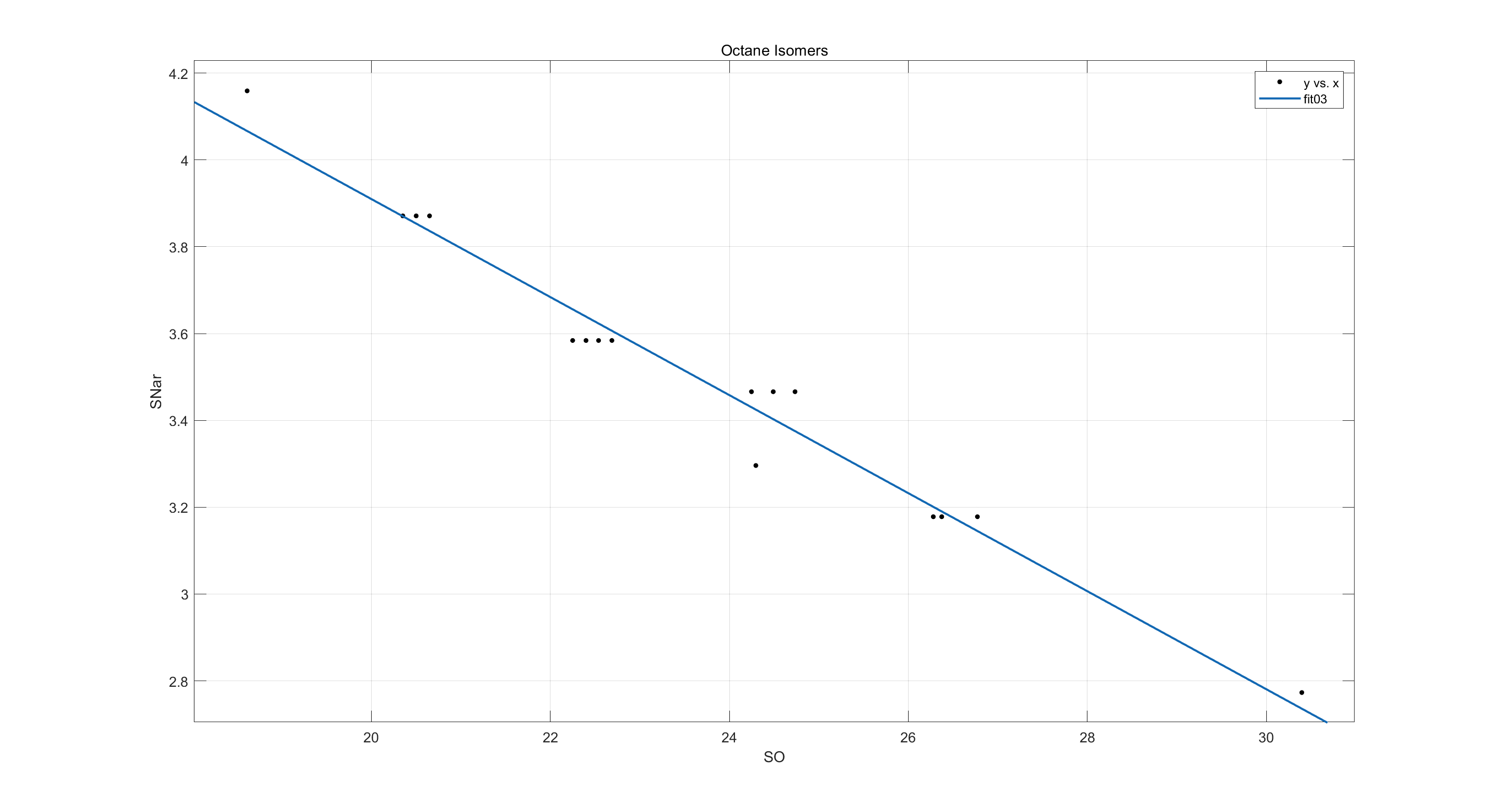}}
  \scalebox{.08}[.08]{\includegraphics{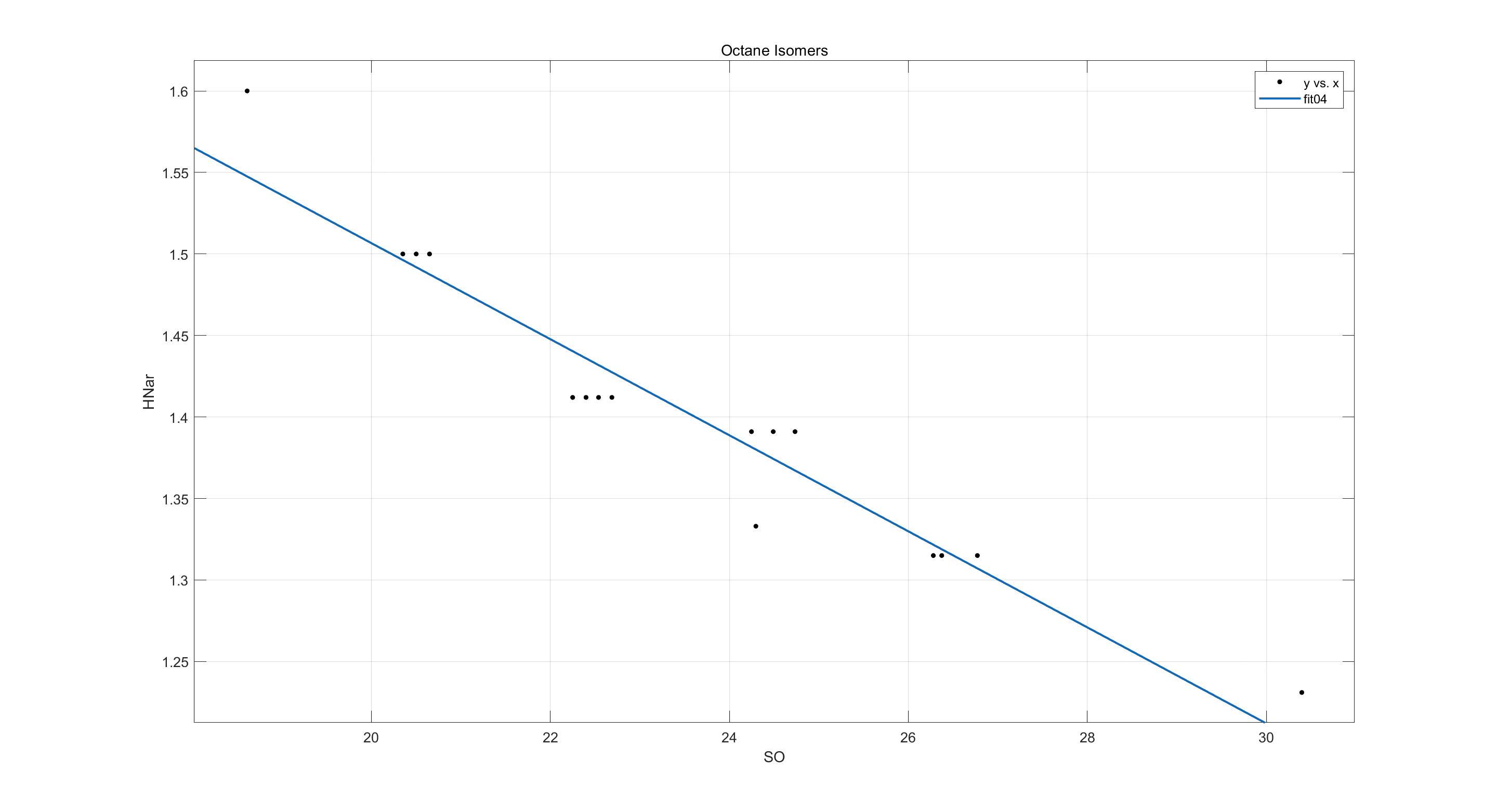}}
  \scalebox{.08}[.08]{\includegraphics{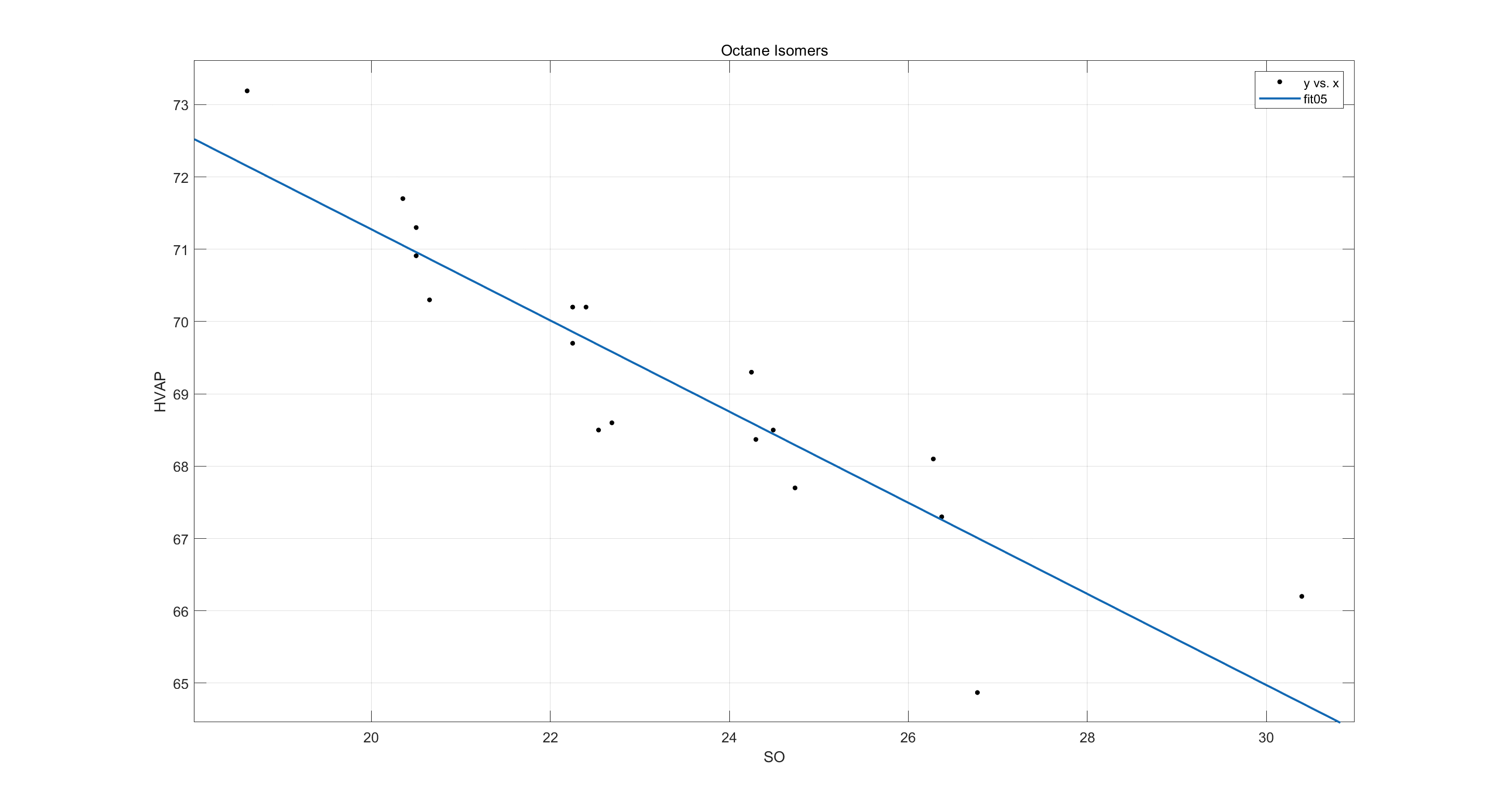}}
  \scalebox{.08}[.08]{\includegraphics{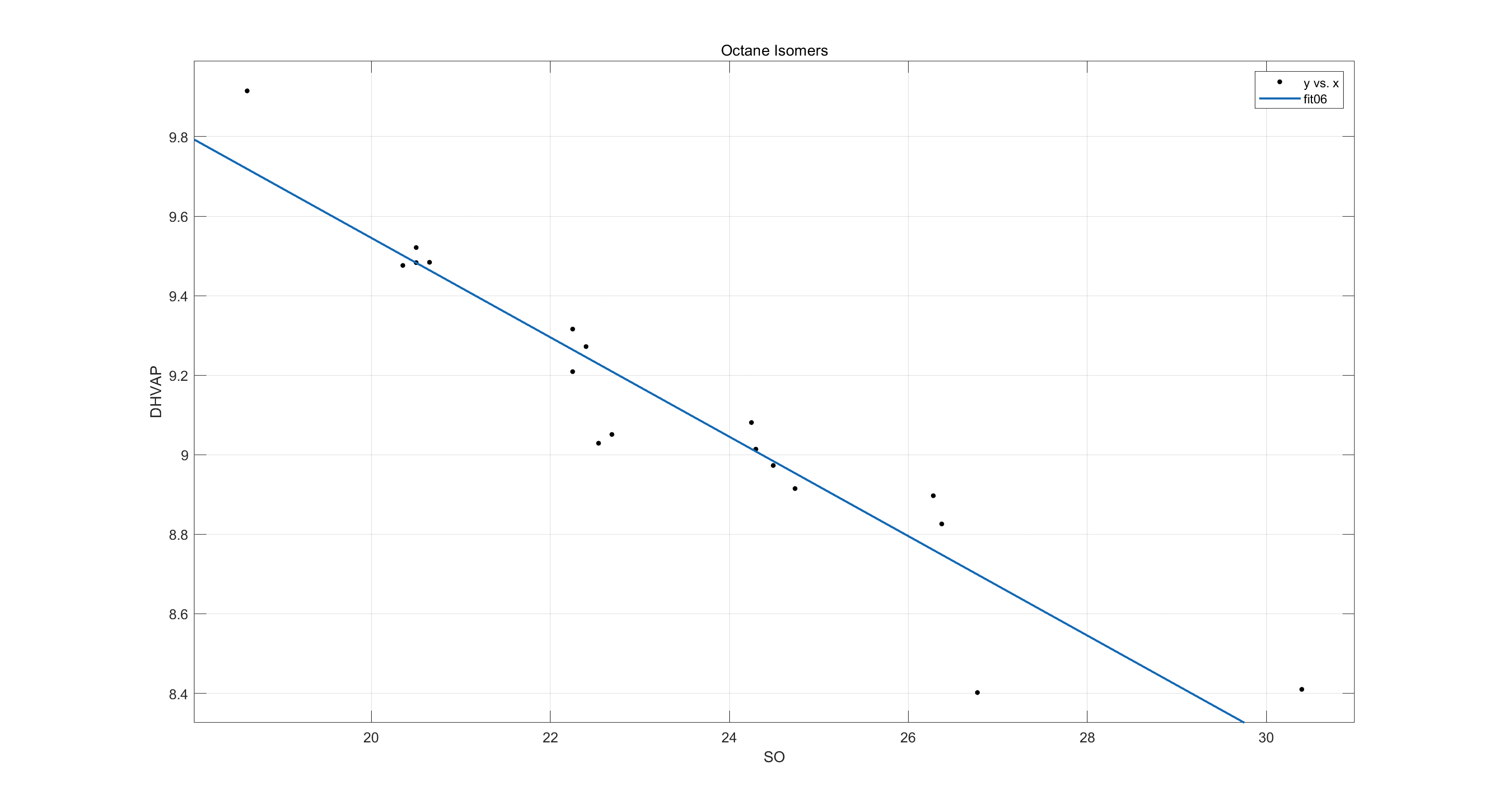}}
  \caption{Scatter plot between AcenFac (resp. Entropy, SNar, HNar, HVAP, DHVAP) of octane isomers and $SO(G)$.}
 \label{fig-74}
\end{figure}

\begin{figure}[ht!]
  \centering
  \scalebox{.08}[.08]{\includegraphics{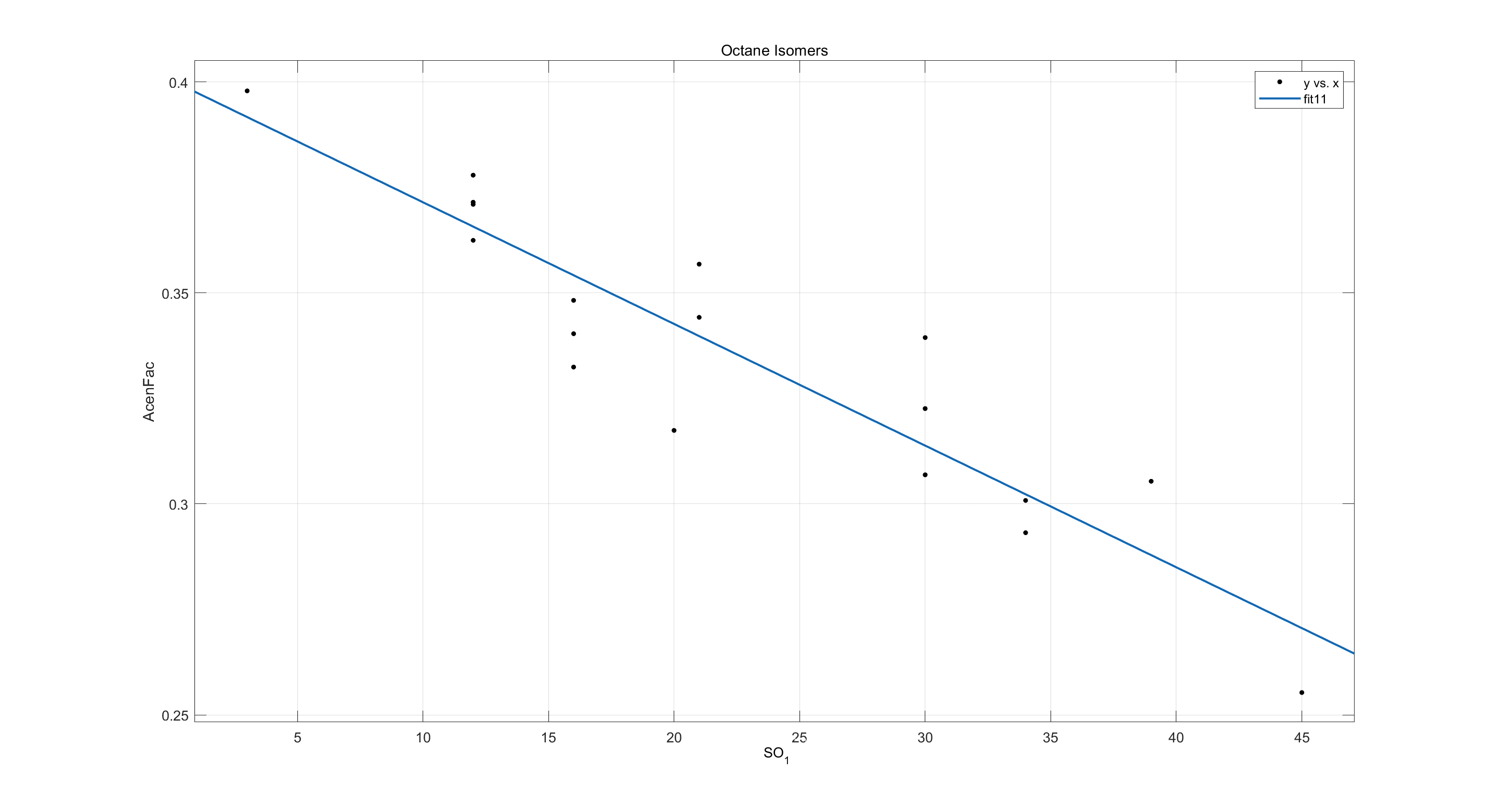}}
  \scalebox{.08}[.08]{\includegraphics{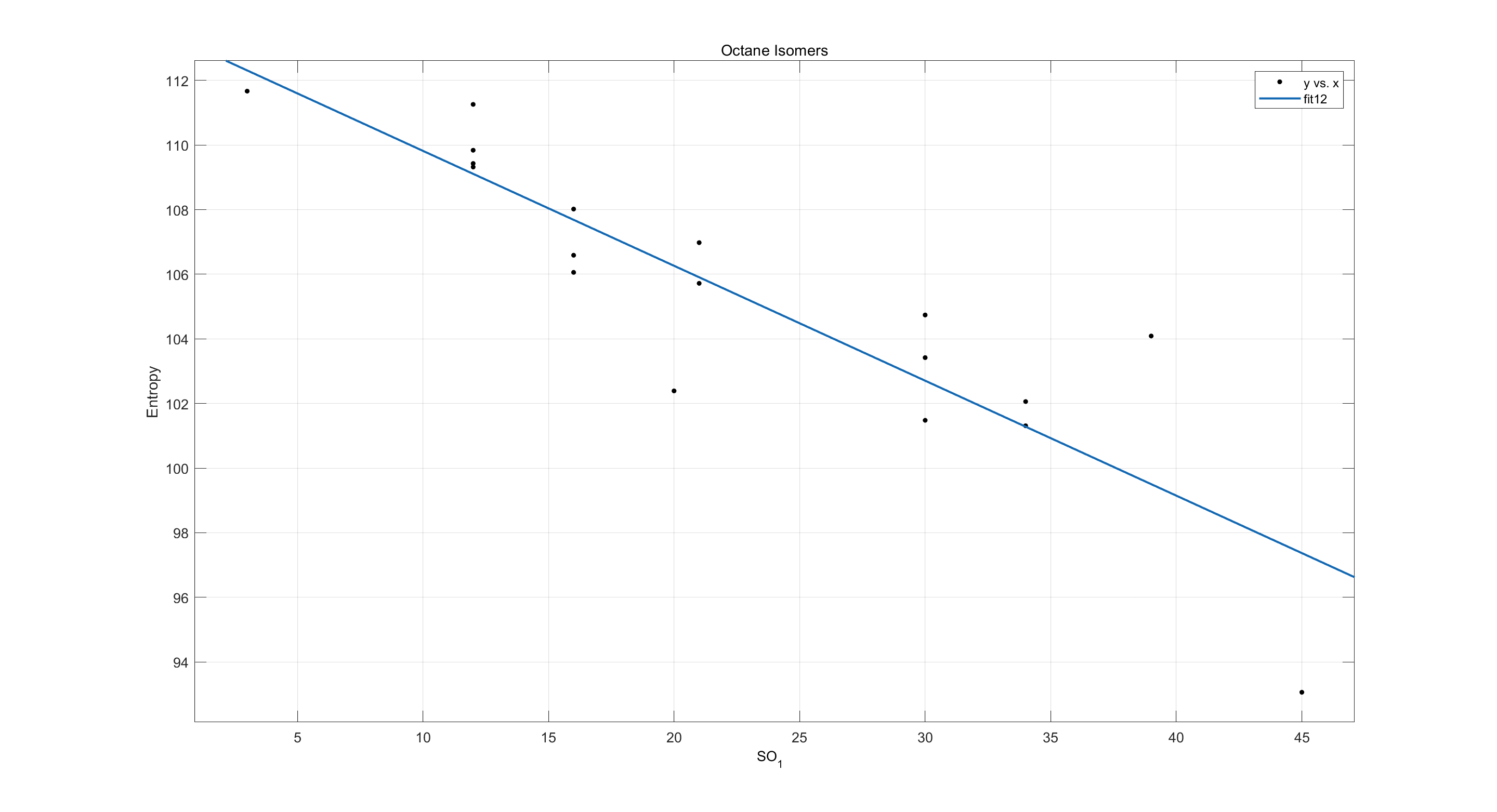}}
  \scalebox{.08}[.08]{\includegraphics{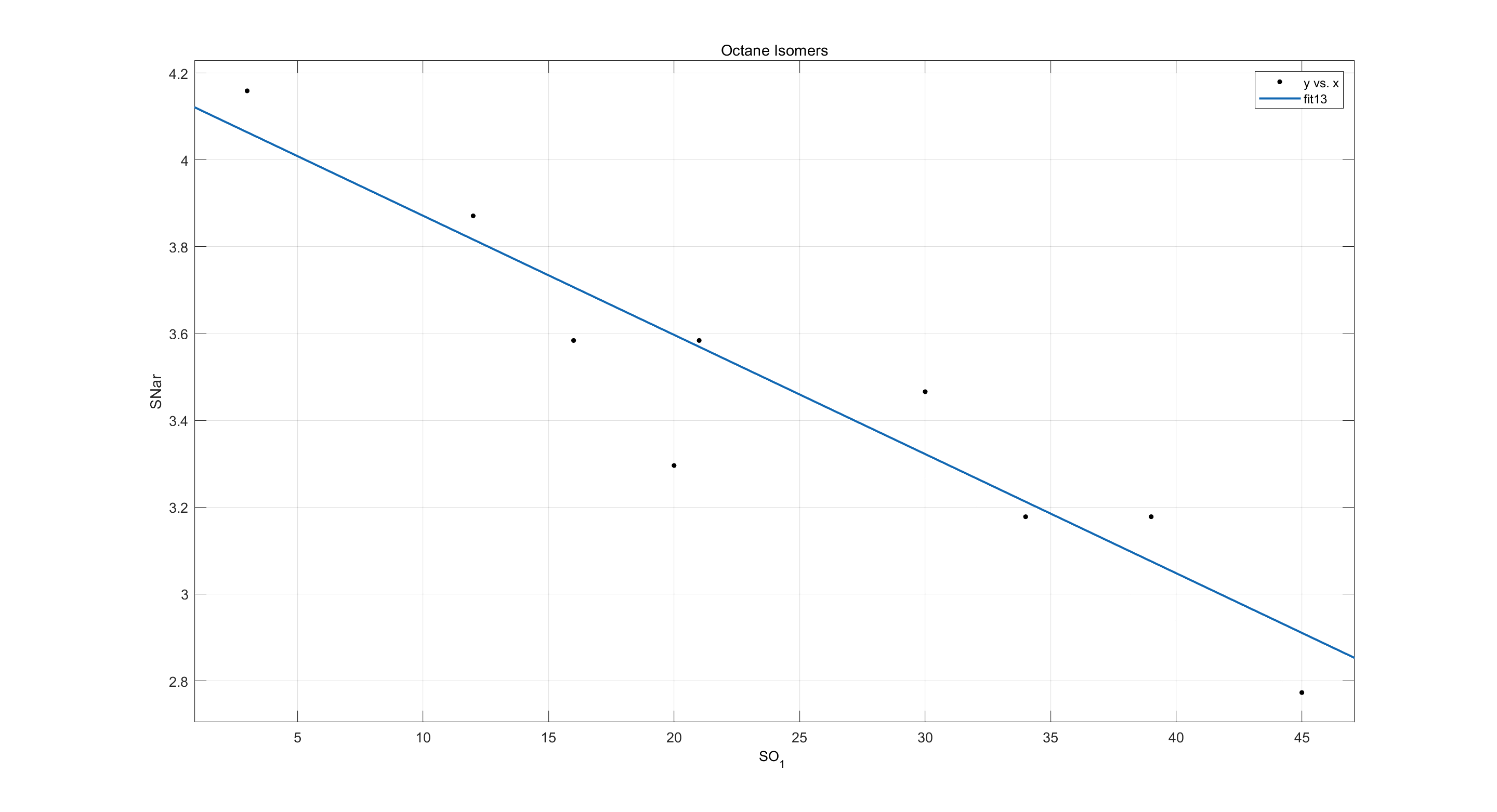}}
  \scalebox{.08}[.08]{\includegraphics{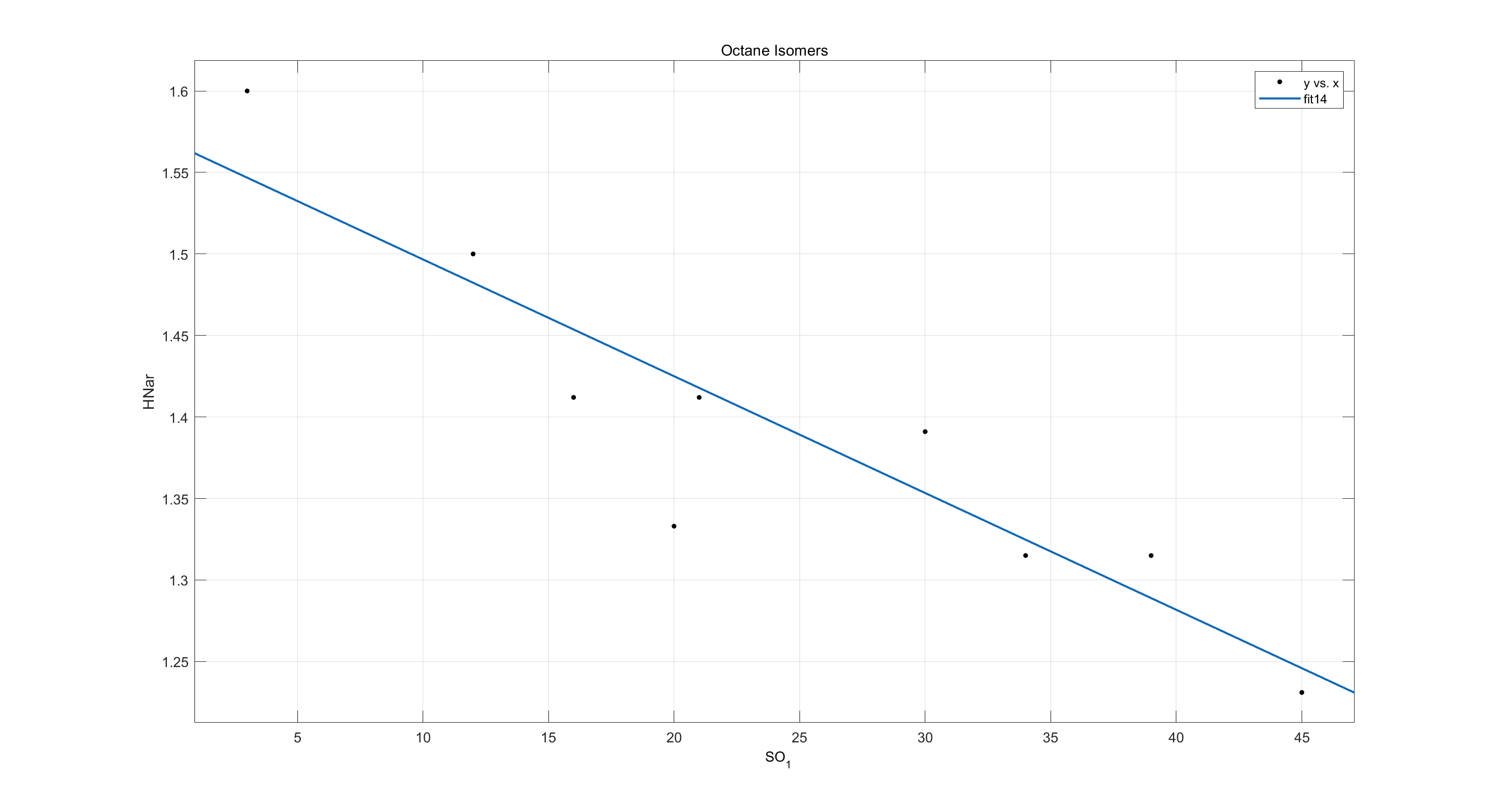}}
  \scalebox{.08}[.08]{\includegraphics{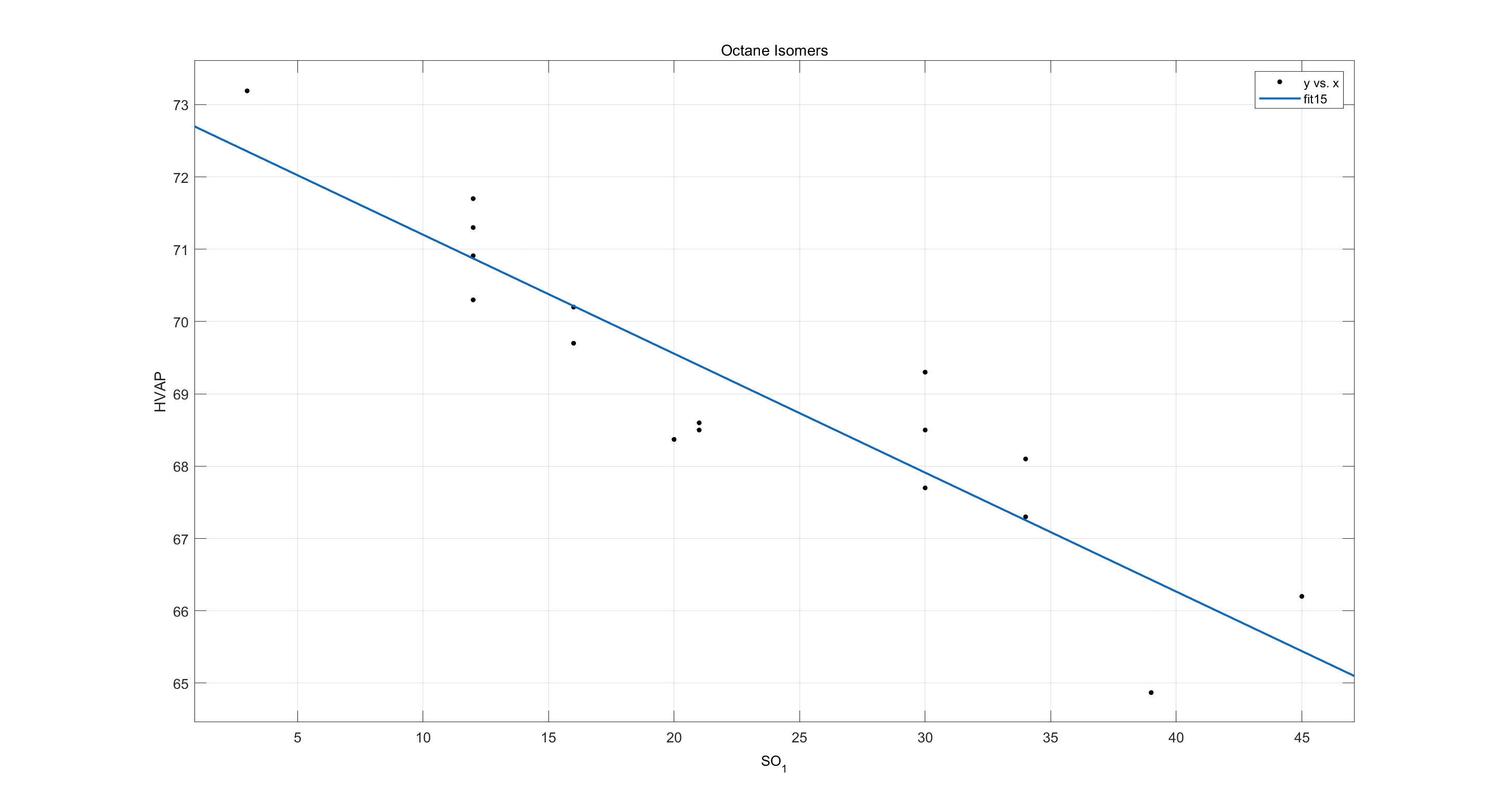}}
  \scalebox{.08}[.08]{\includegraphics{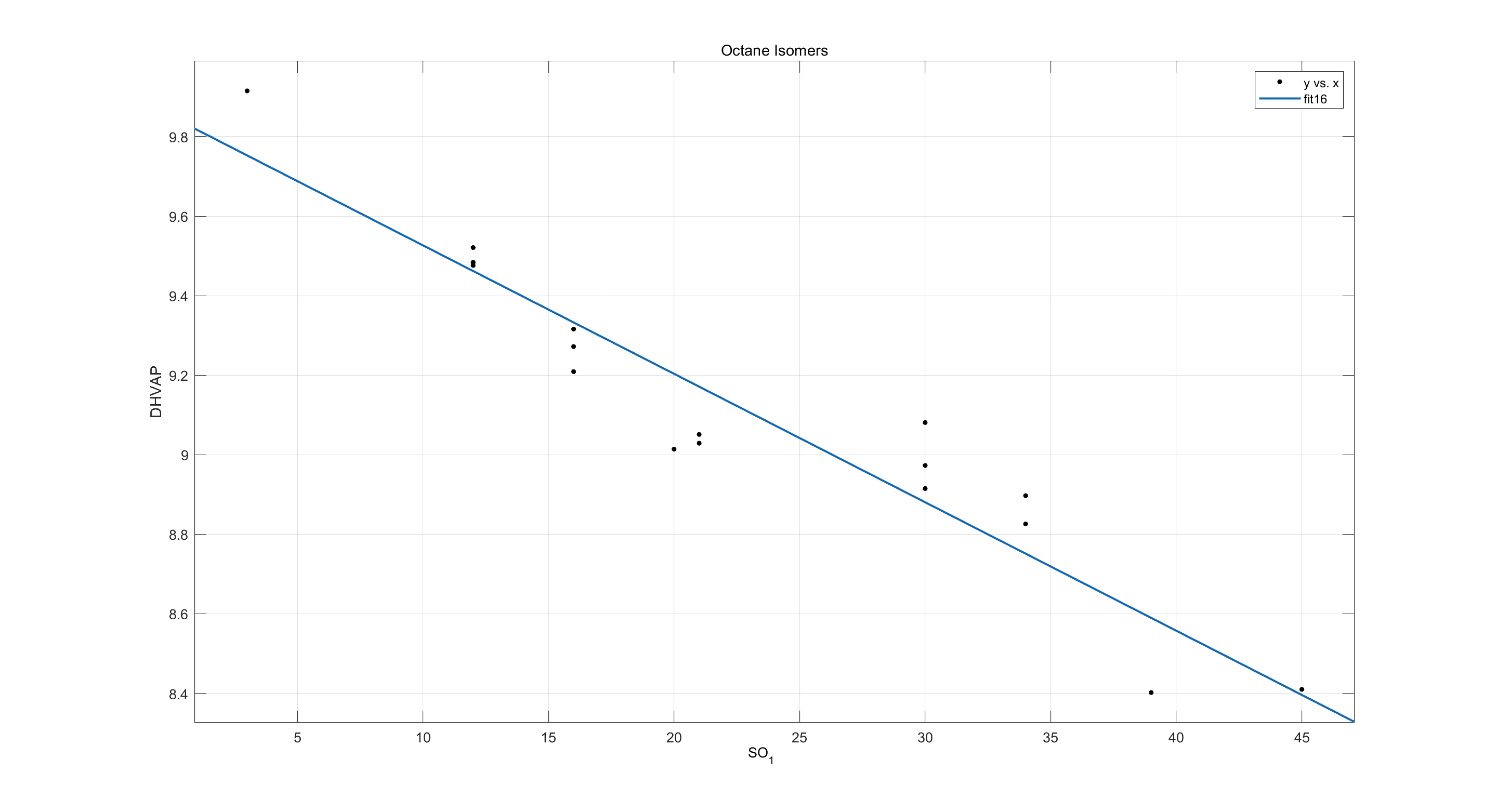}}
  \caption{Scatter plot between AcenFac (resp. Entropy, SNar, HNar, HVAP, DHVAP) of octane isomers and $SO_{1}(G)$.}
 \label{fig-75}
\end{figure}

\begin{figure}[ht!]
  \centering
  \scalebox{.08}[.08]{\includegraphics{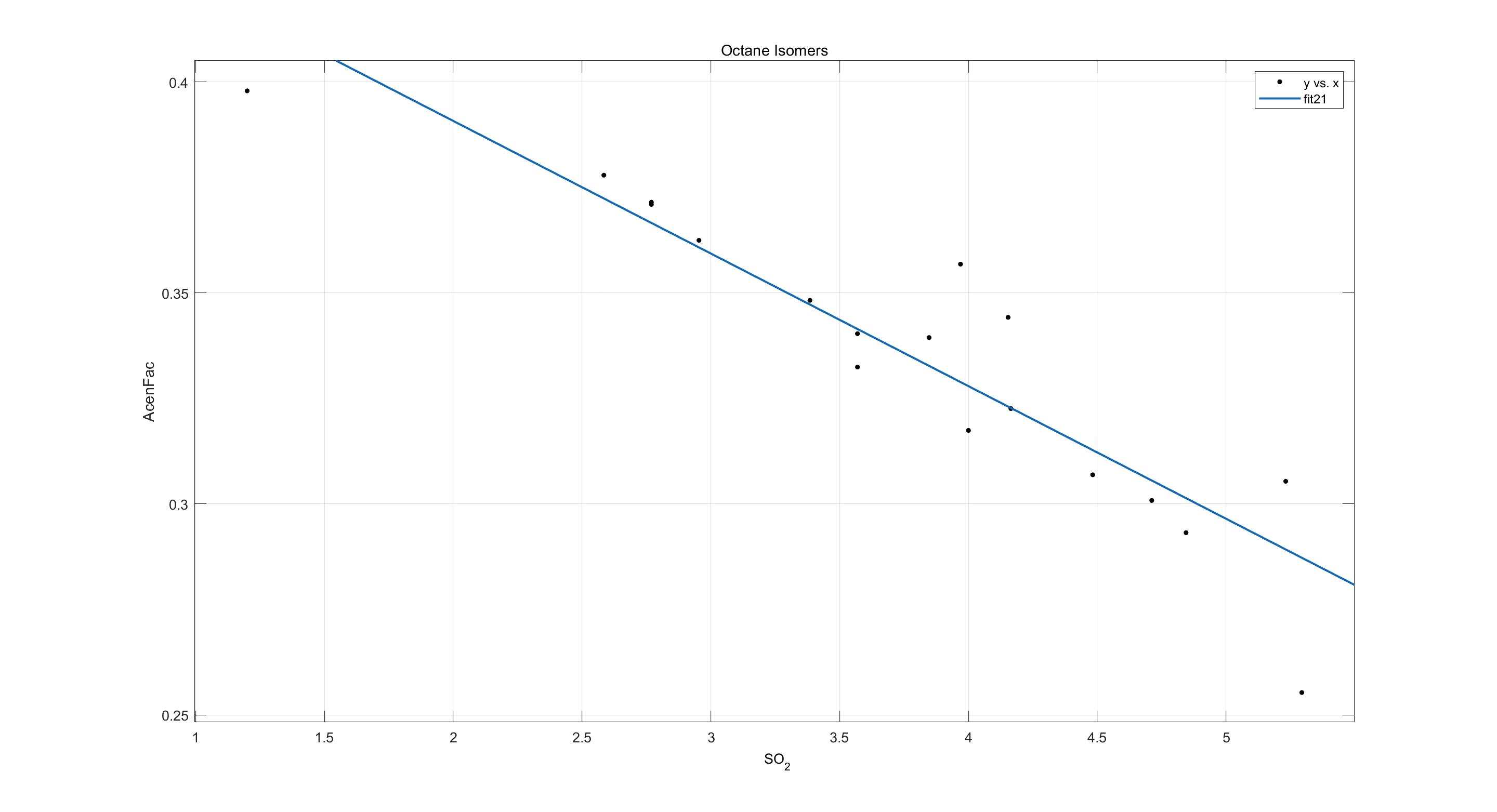}}
  \scalebox{.08}[.08]{\includegraphics{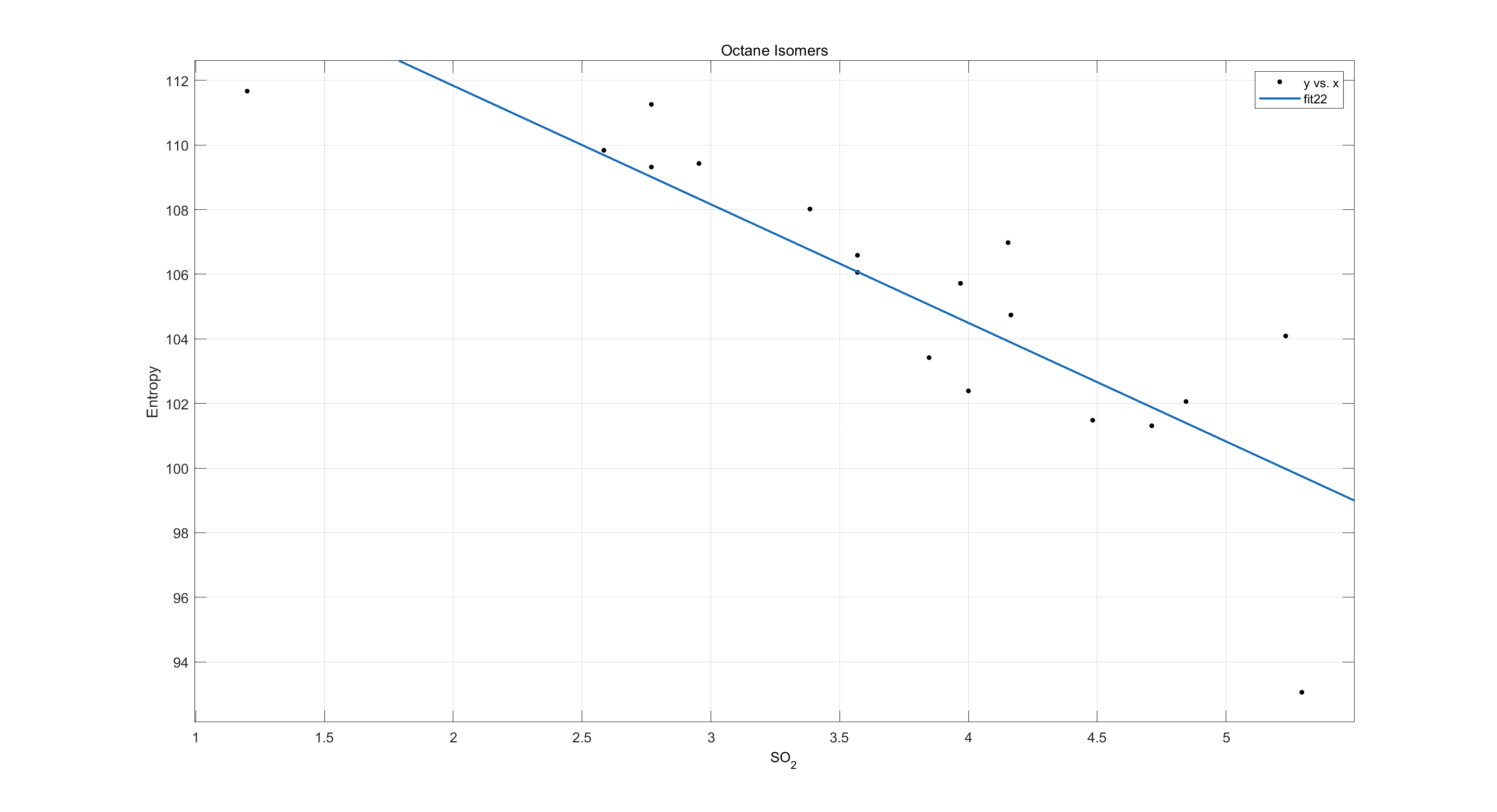}}
  \scalebox{.08}[.08]{\includegraphics{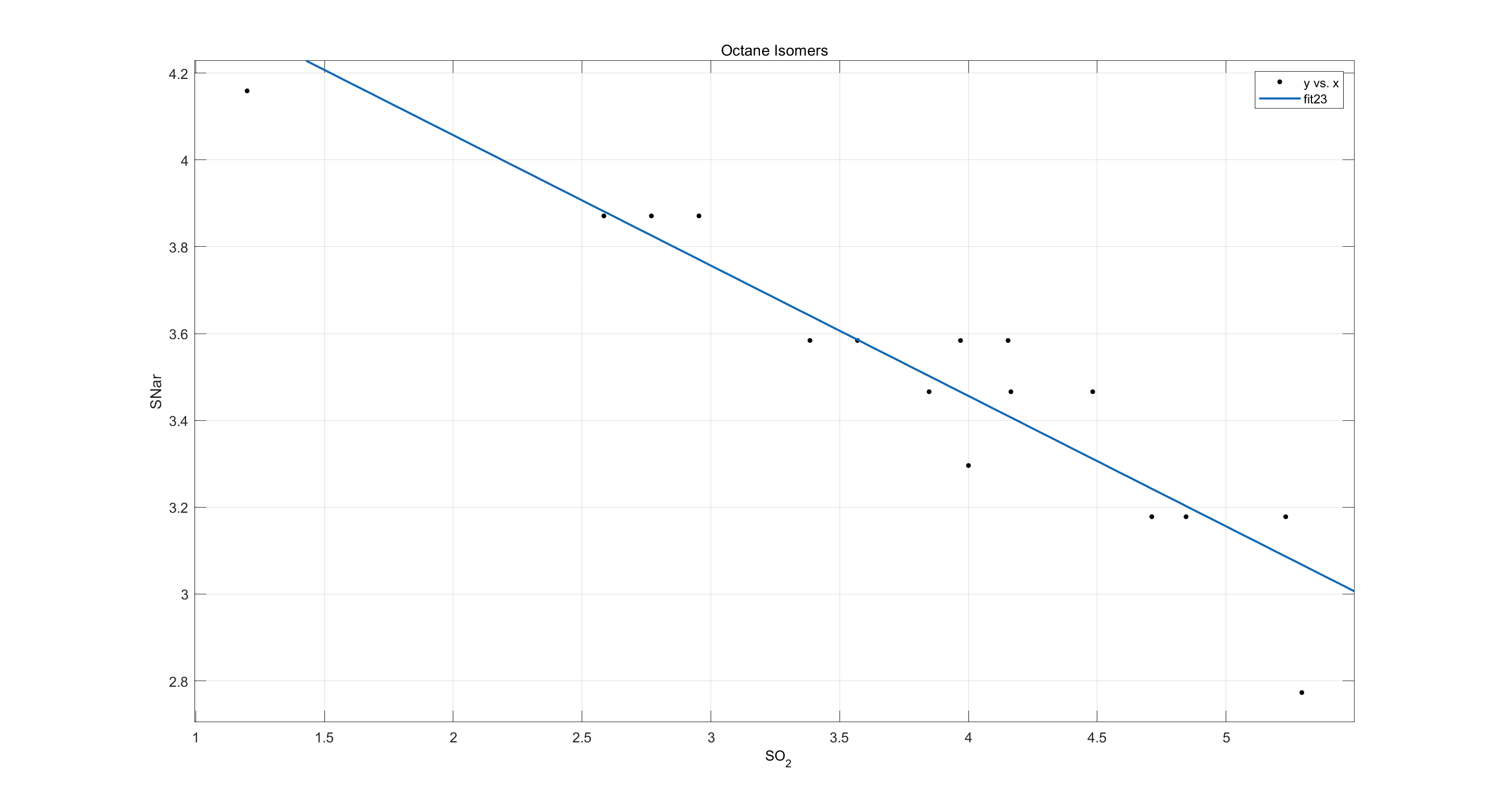}}
  \scalebox{.08}[.08]{\includegraphics{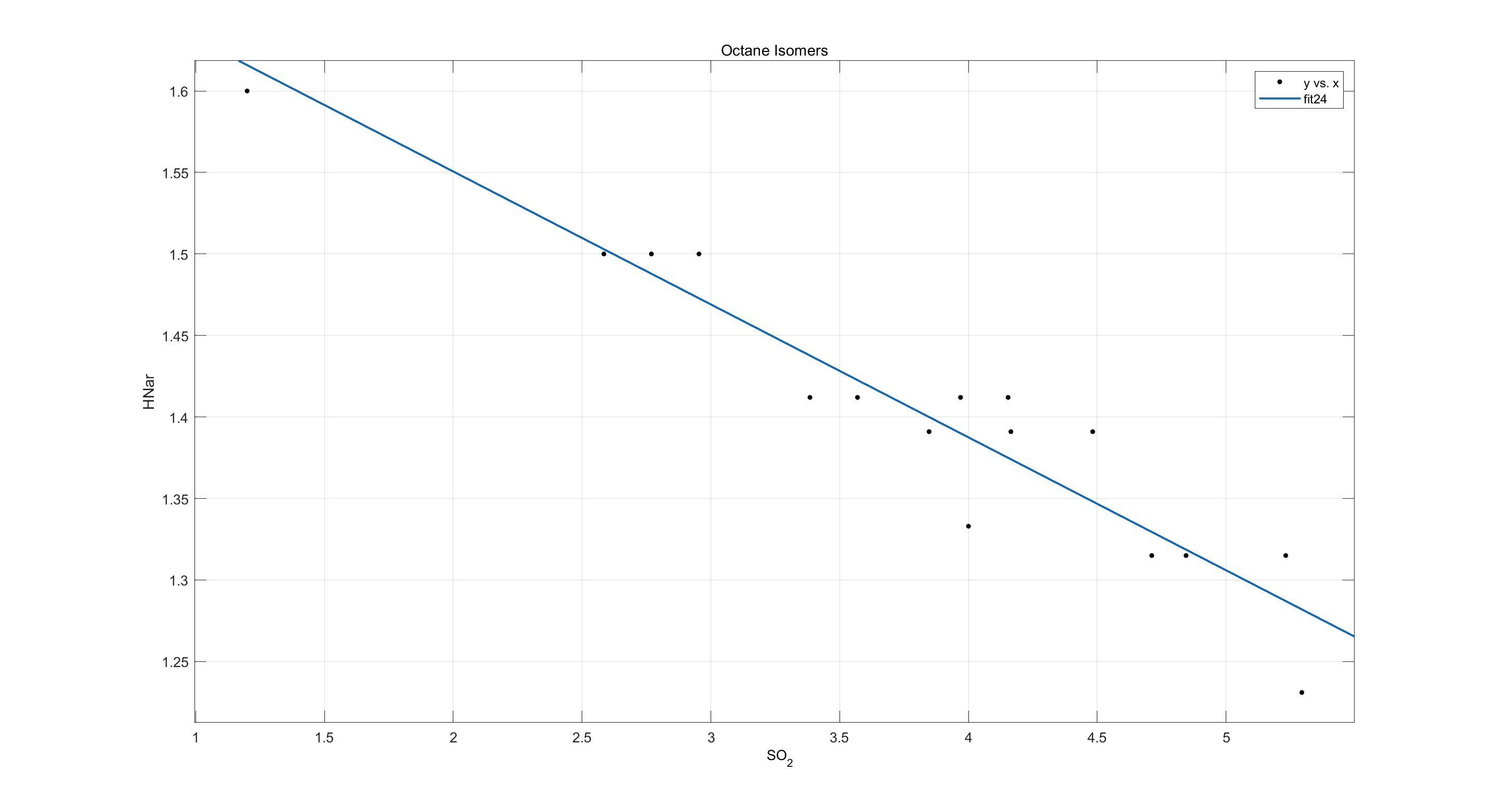}}
  \scalebox{.08}[.08]{\includegraphics{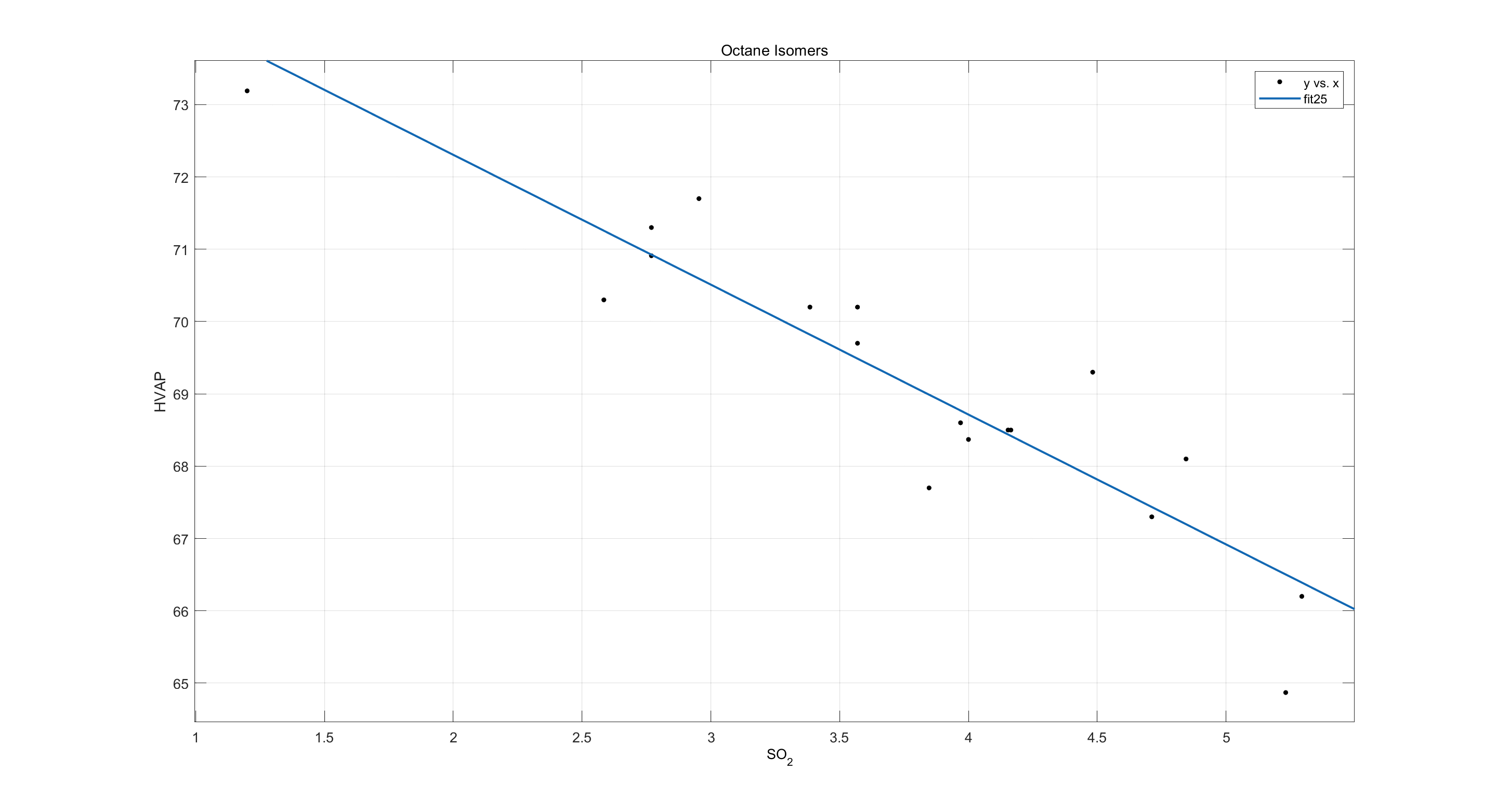}}
  \scalebox{.08}[.08]{\includegraphics{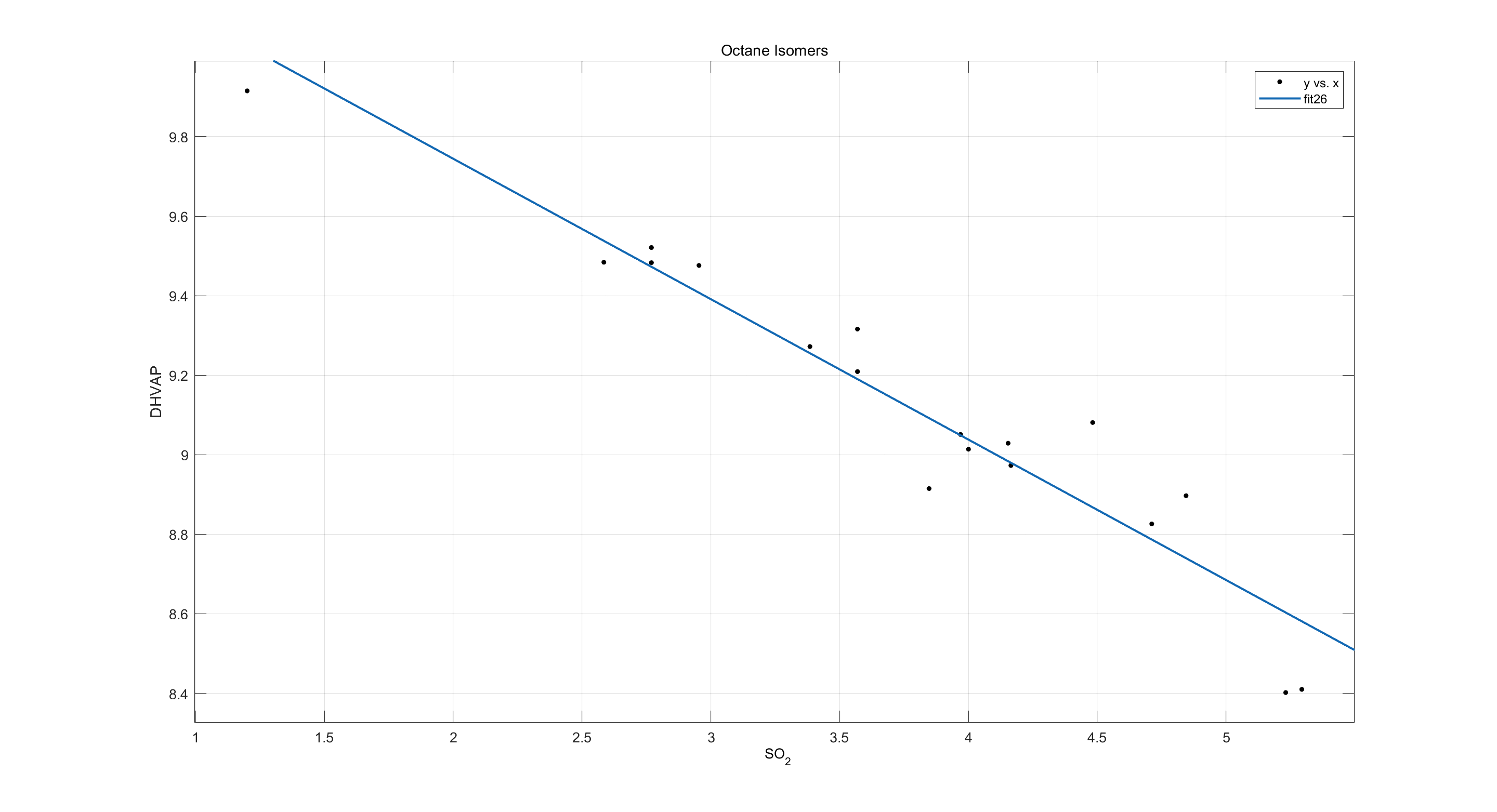}}
  \caption{Scatter plot between AcenFac (resp. Entropy, SNar, HNar, HVAP, DHVAP) of octane isomers and $SO_{2}(G)$.}
 \label{fig-76}
\end{figure}

\begin{figure}[ht!]
  \centering
  \scalebox{.08}[.08]{\includegraphics{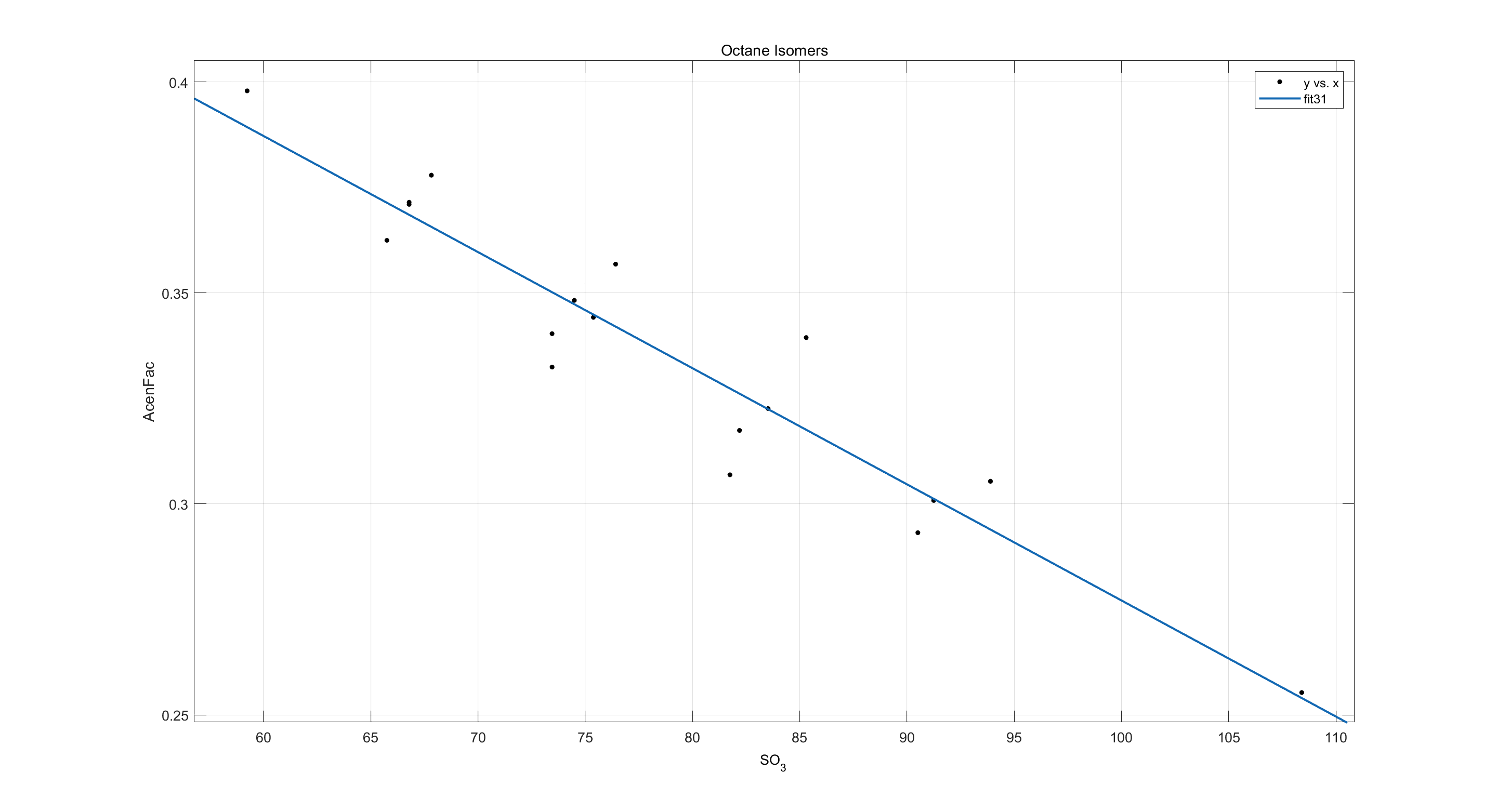}}
  \scalebox{.08}[.08]{\includegraphics{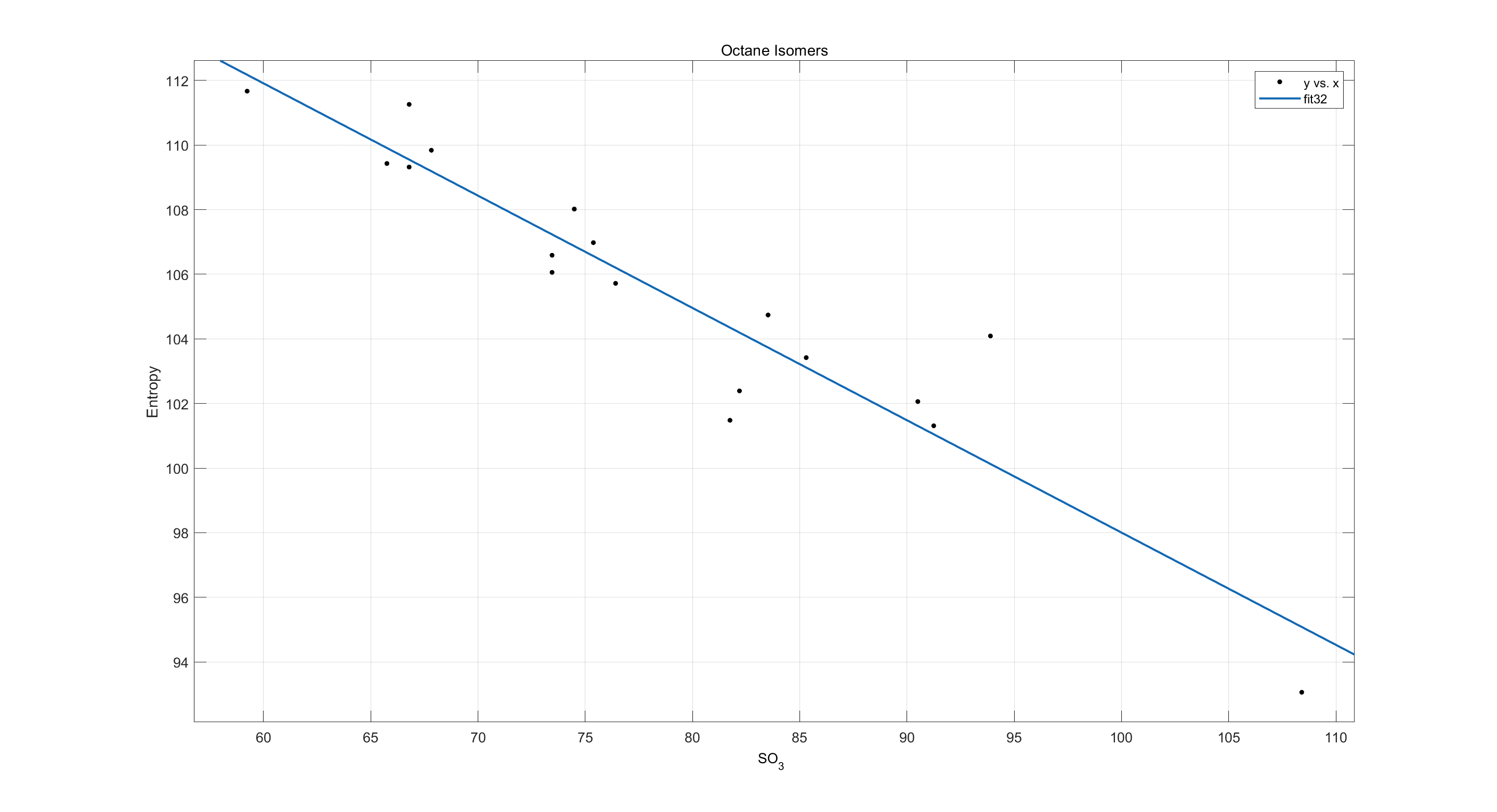}}
  \scalebox{.08}[.08]{\includegraphics{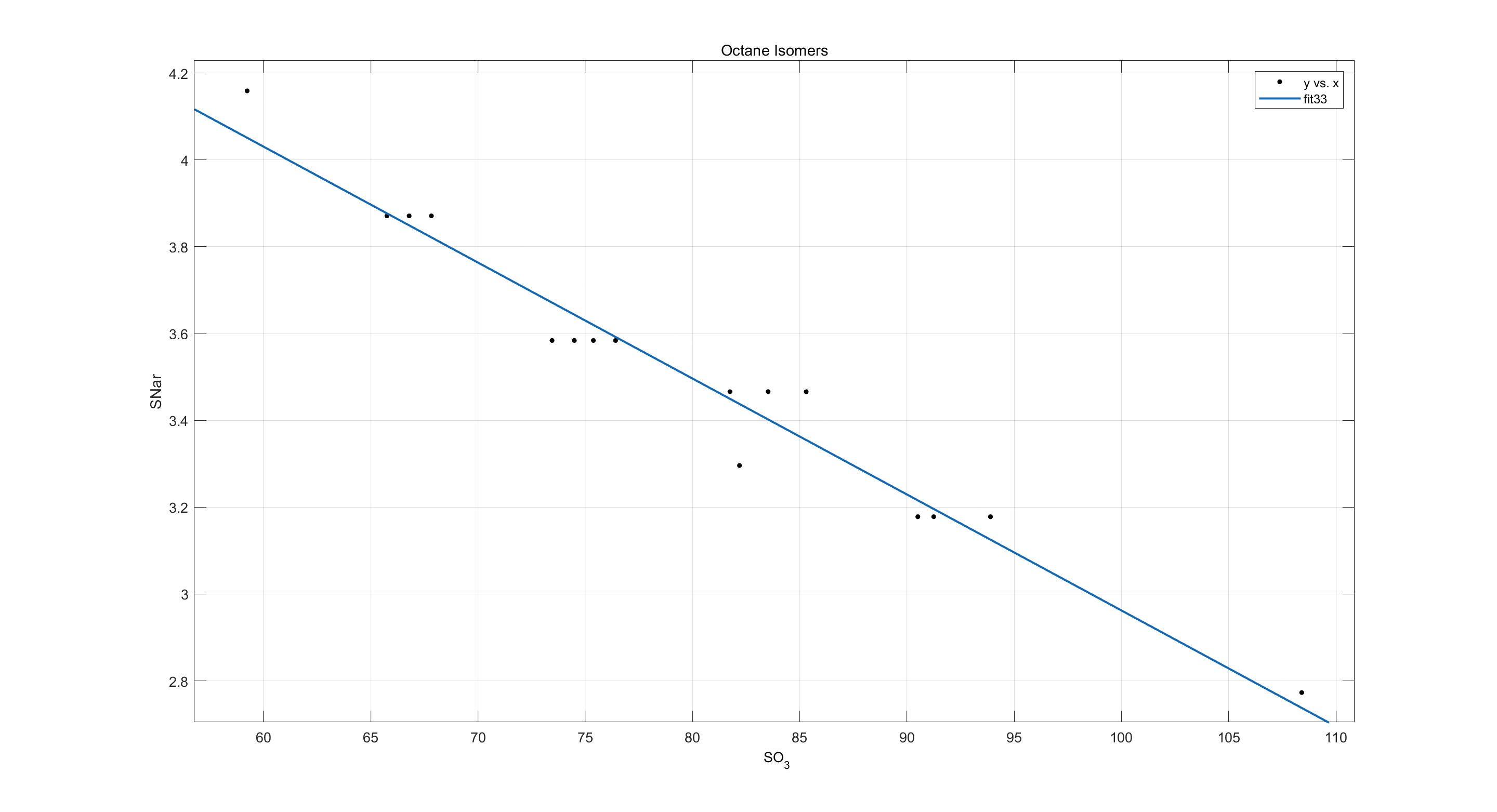}}
  \scalebox{.08}[.08]{\includegraphics{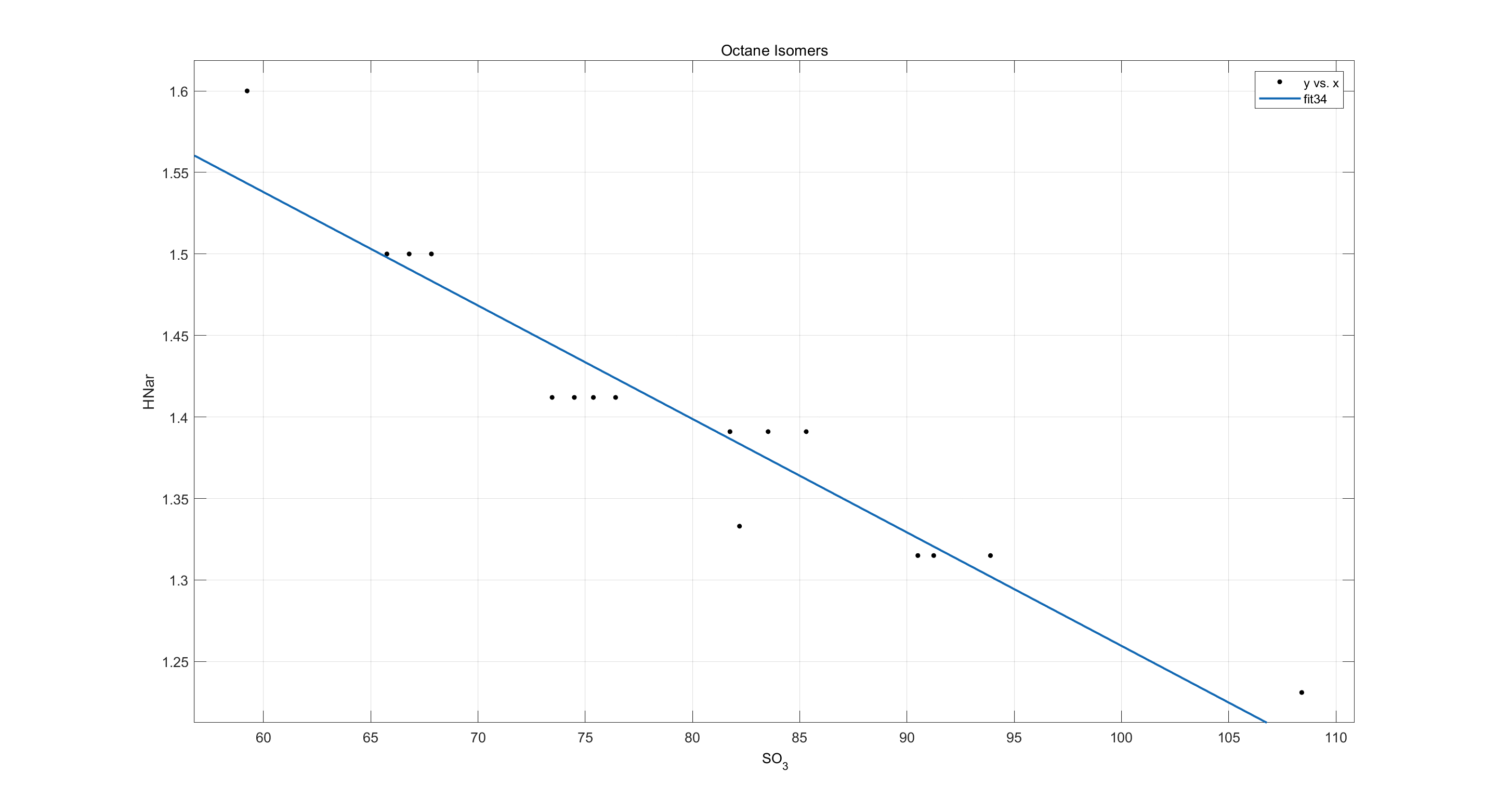}}
  \scalebox{.08}[.08]{\includegraphics{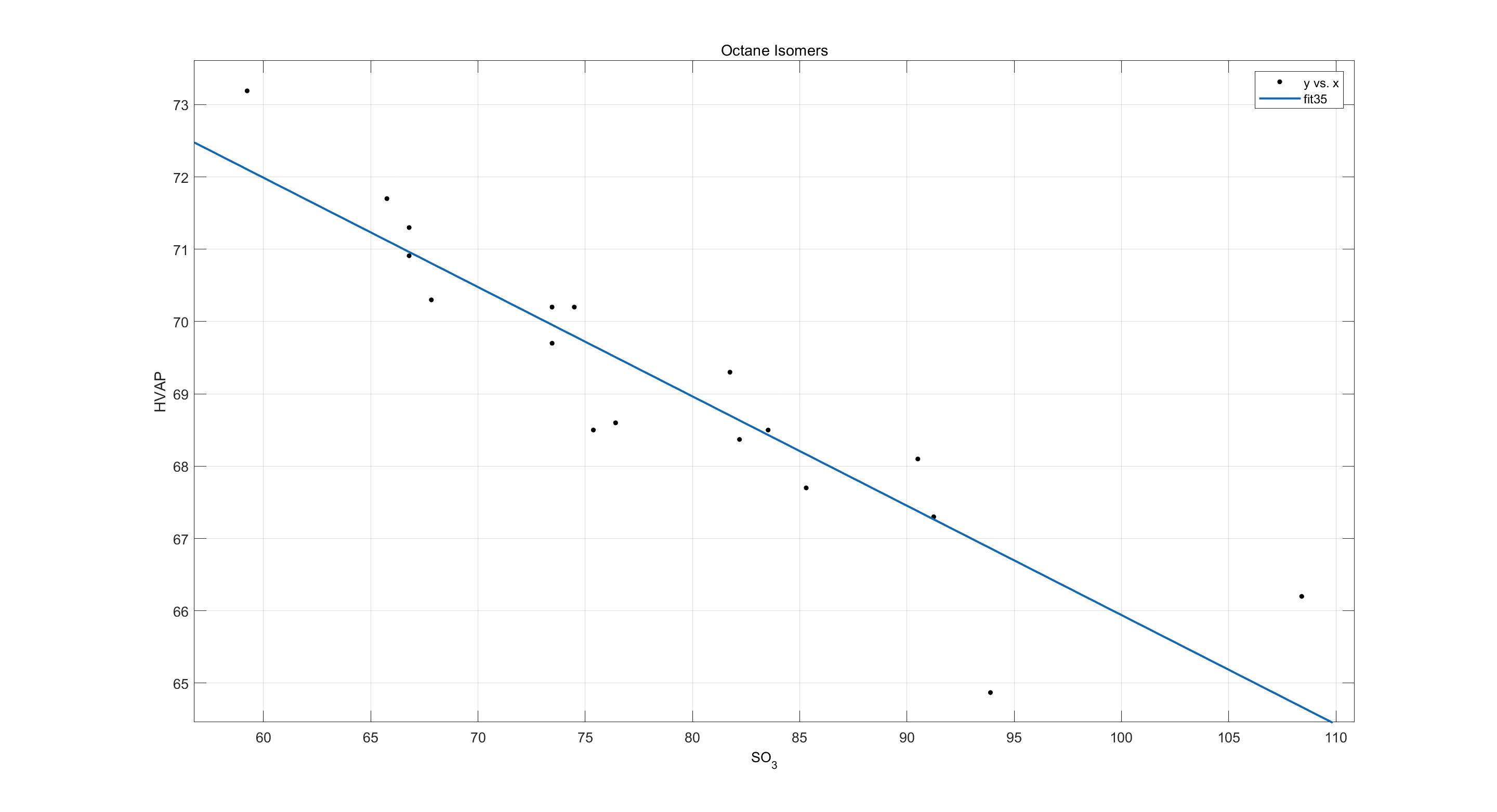}}
  \scalebox{.08}[.08]{\includegraphics{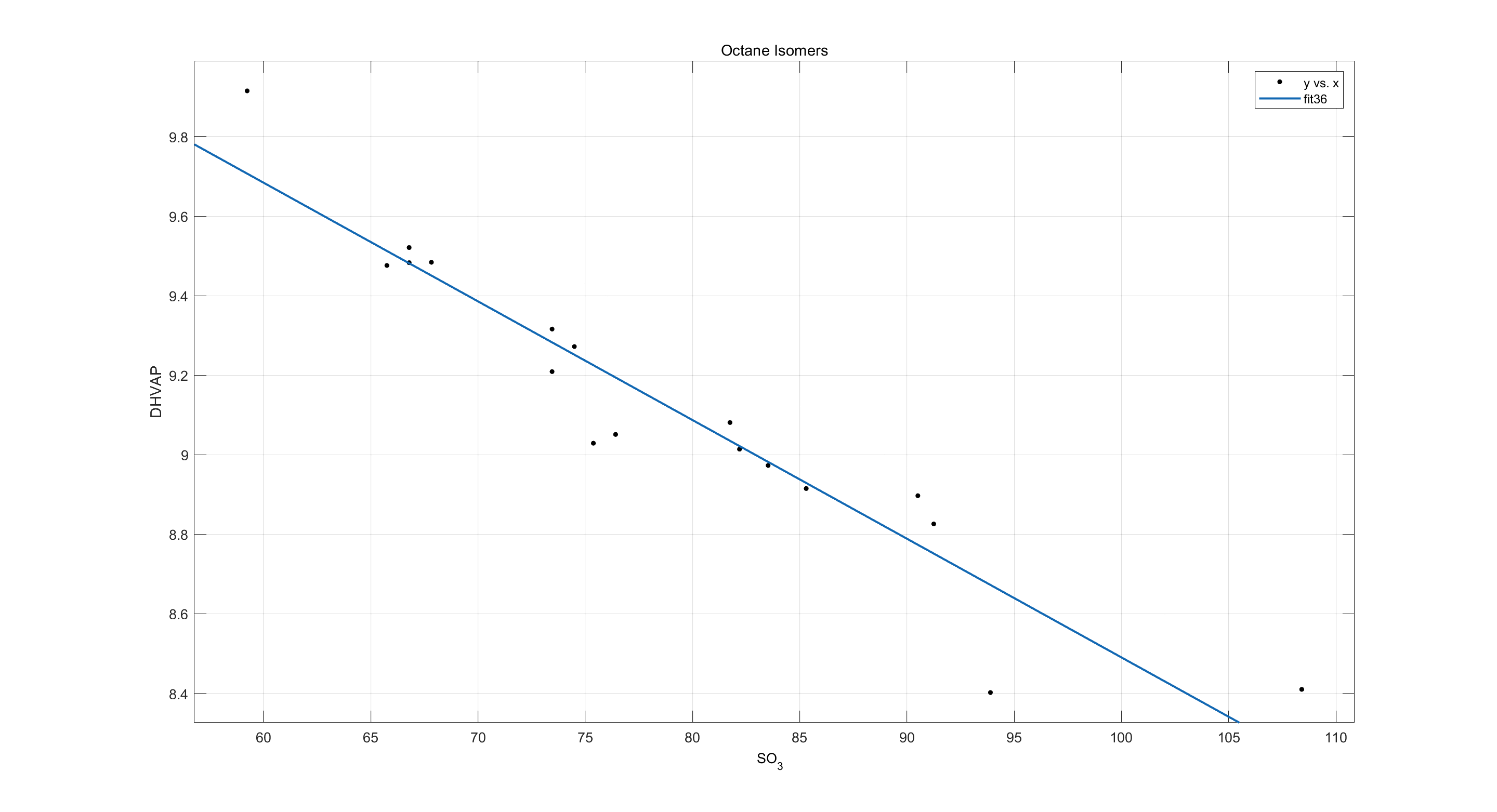}}
  \caption{Scatter plot between AcenFac (resp. Entropy, SNar, HNar, HVAP, DHVAP) of octane isomers and $SO_{3}(G)$.}
 \label{fig-77}
\end{figure}

\begin{figure}[ht!]
  \centering
  \scalebox{.08}[.08]{\includegraphics{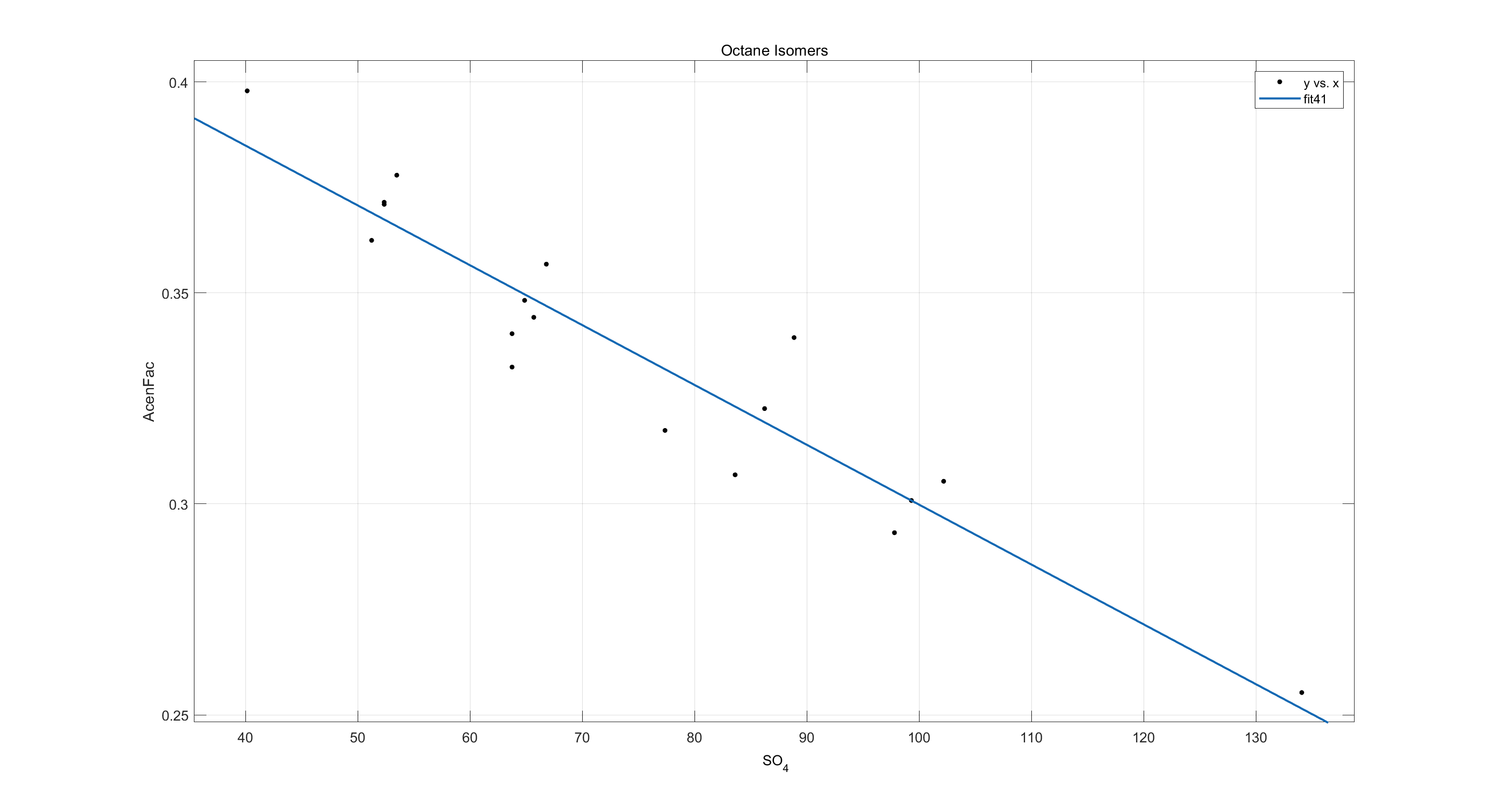}}
  \scalebox{.08}[.08]{\includegraphics{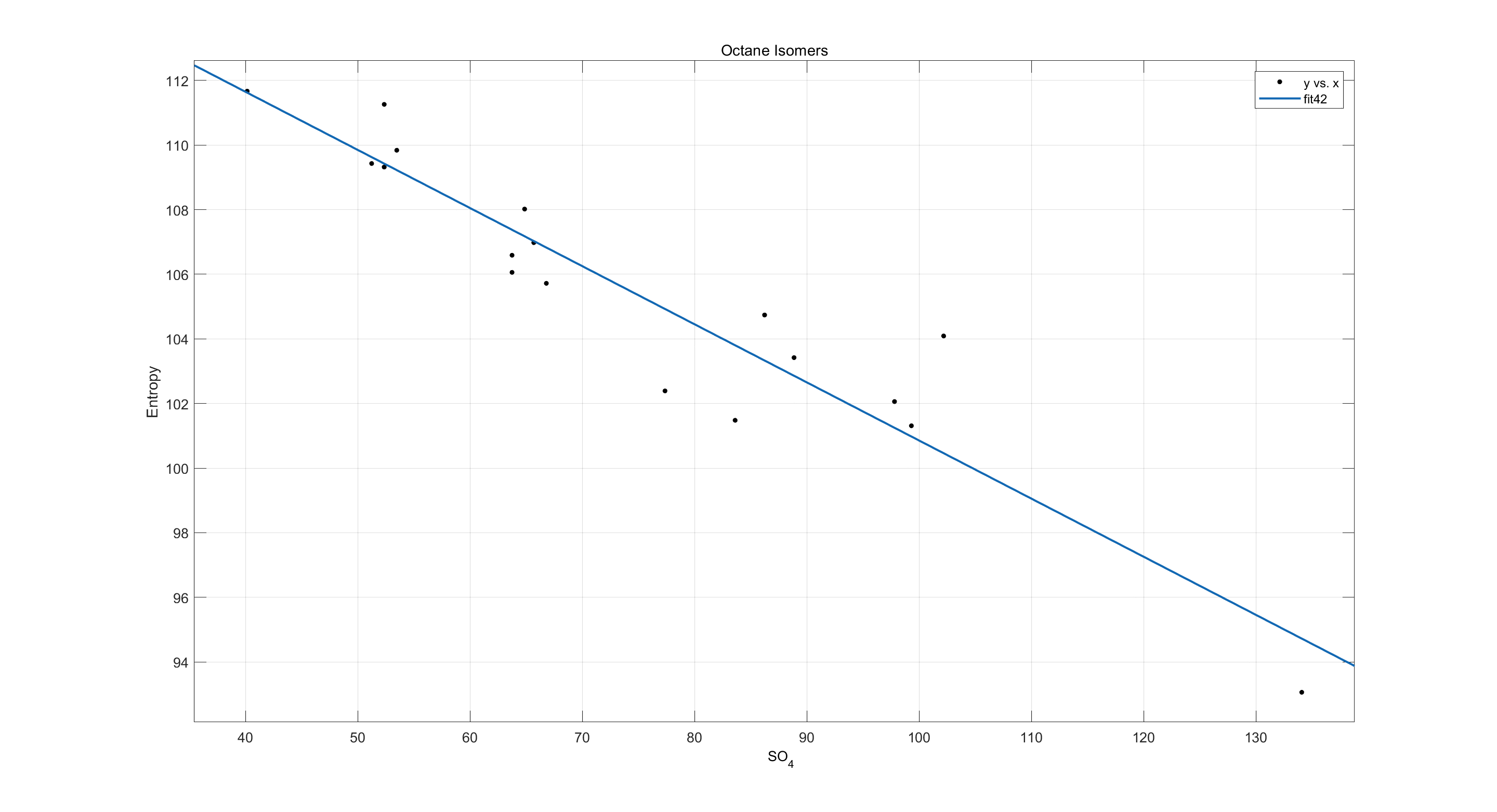}}
  \scalebox{.08}[.08]{\includegraphics{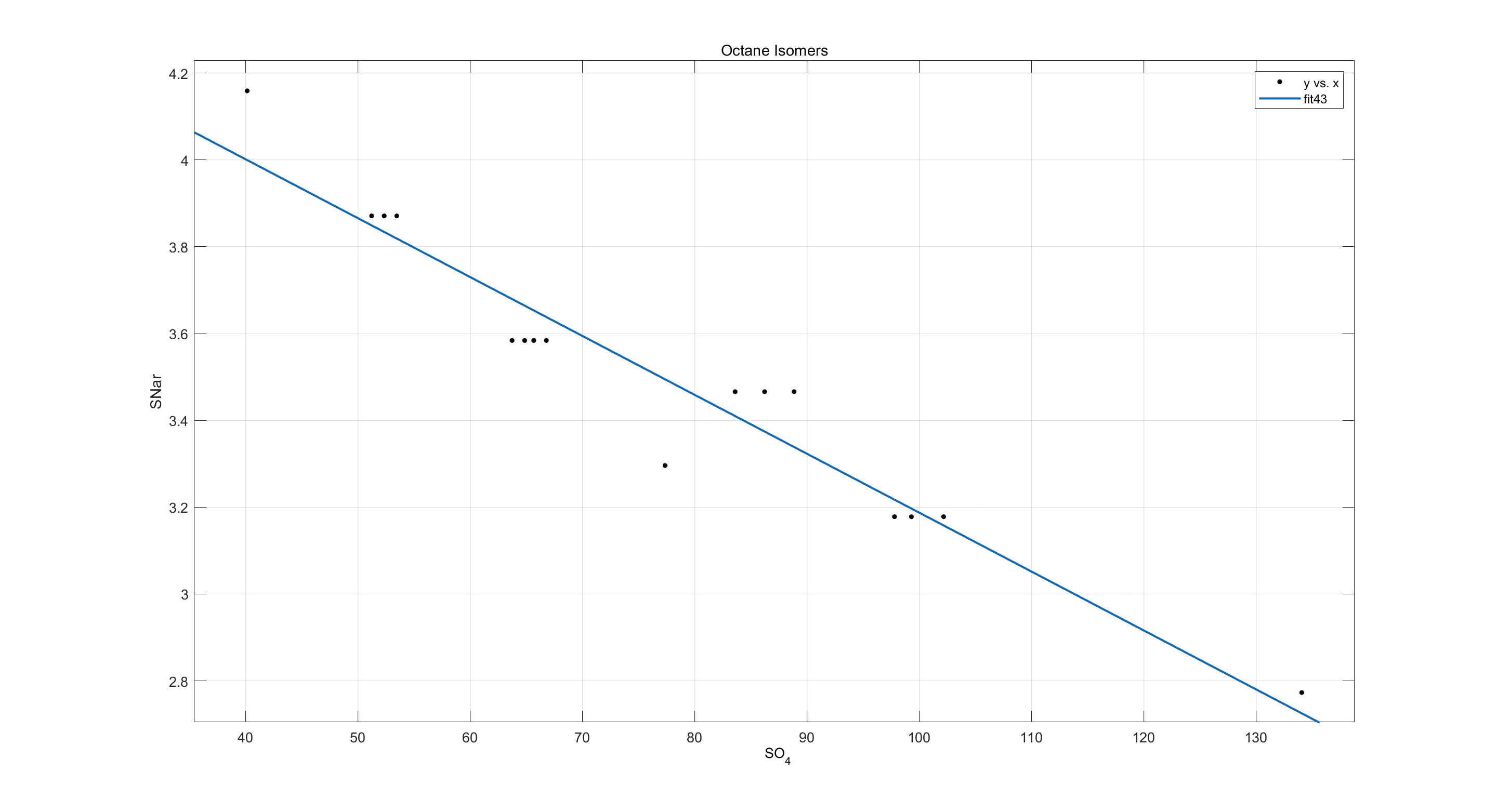}}
  \scalebox{.08}[.08]{\includegraphics{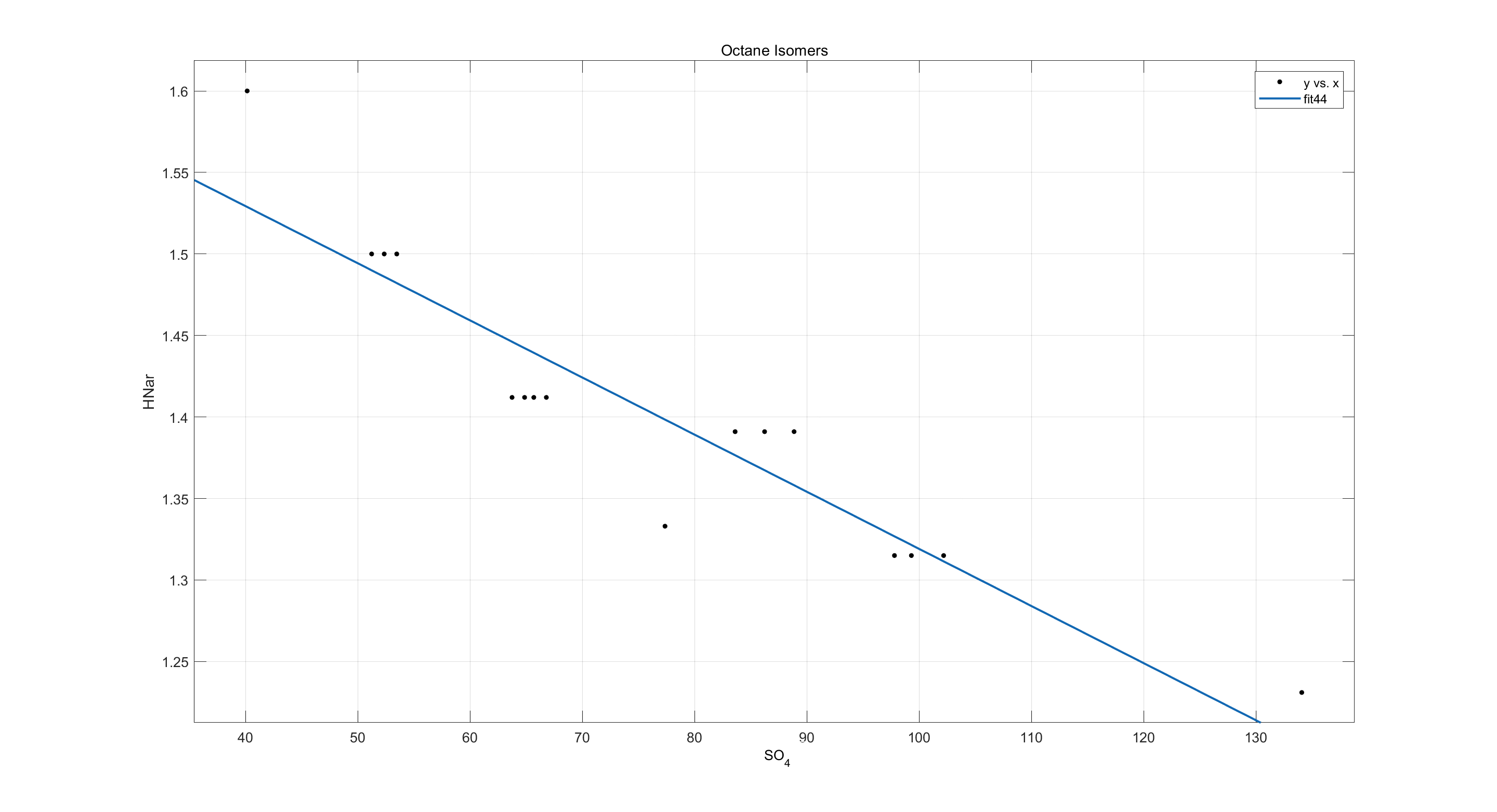}}
  \scalebox{.08}[.08]{\includegraphics{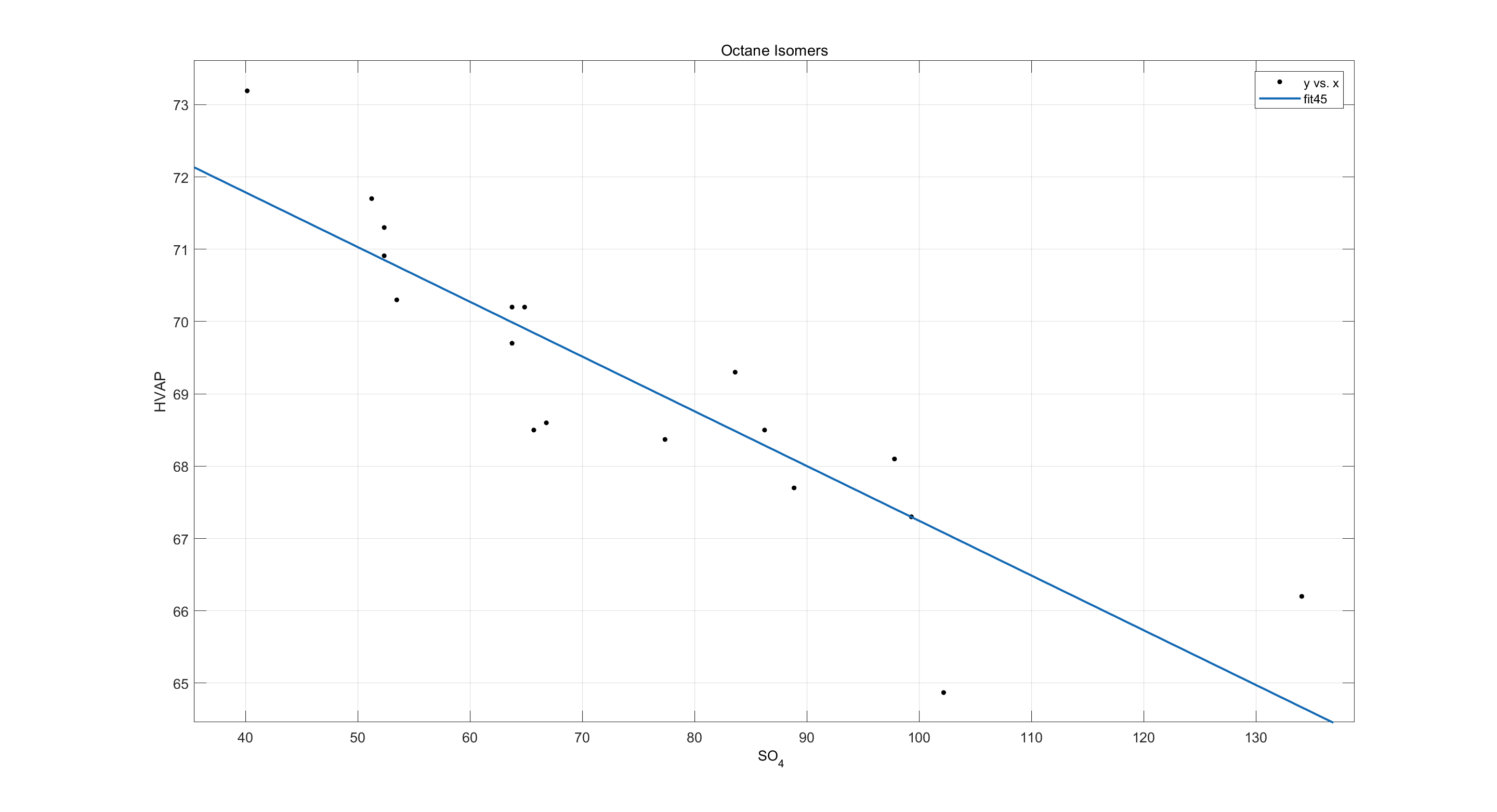}}
  \scalebox{.08}[.08]{\includegraphics{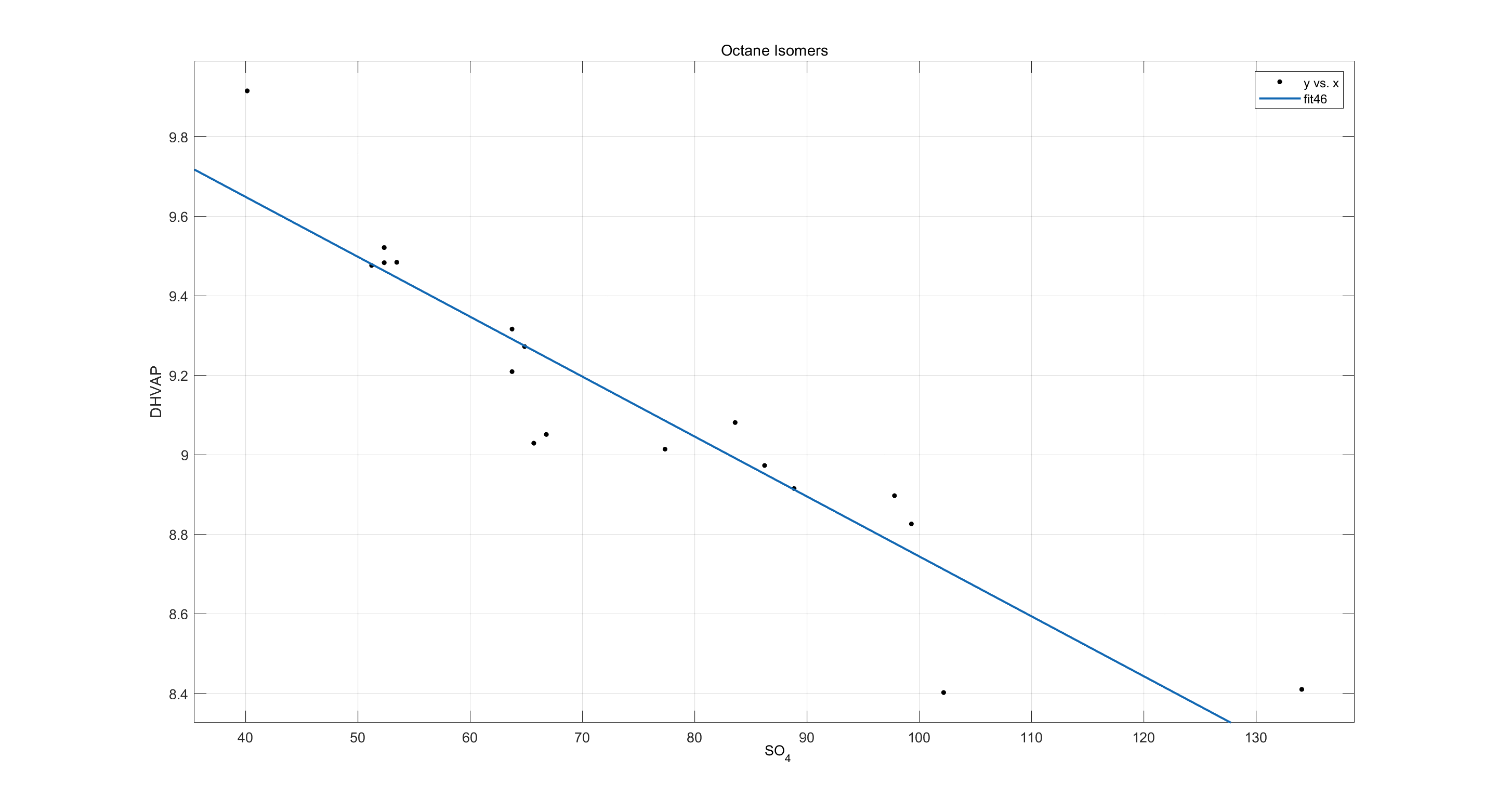}}
  \caption{Scatter plot between AcenFac (resp. Entropy, SNar, HNar, HVAP, DHVAP) of octane isomers and $SO_{4}(G)$.}
 \label{fig-78}
\end{figure}

\begin{figure}[ht!]
  \centering
  \scalebox{.08}[.08]{\includegraphics{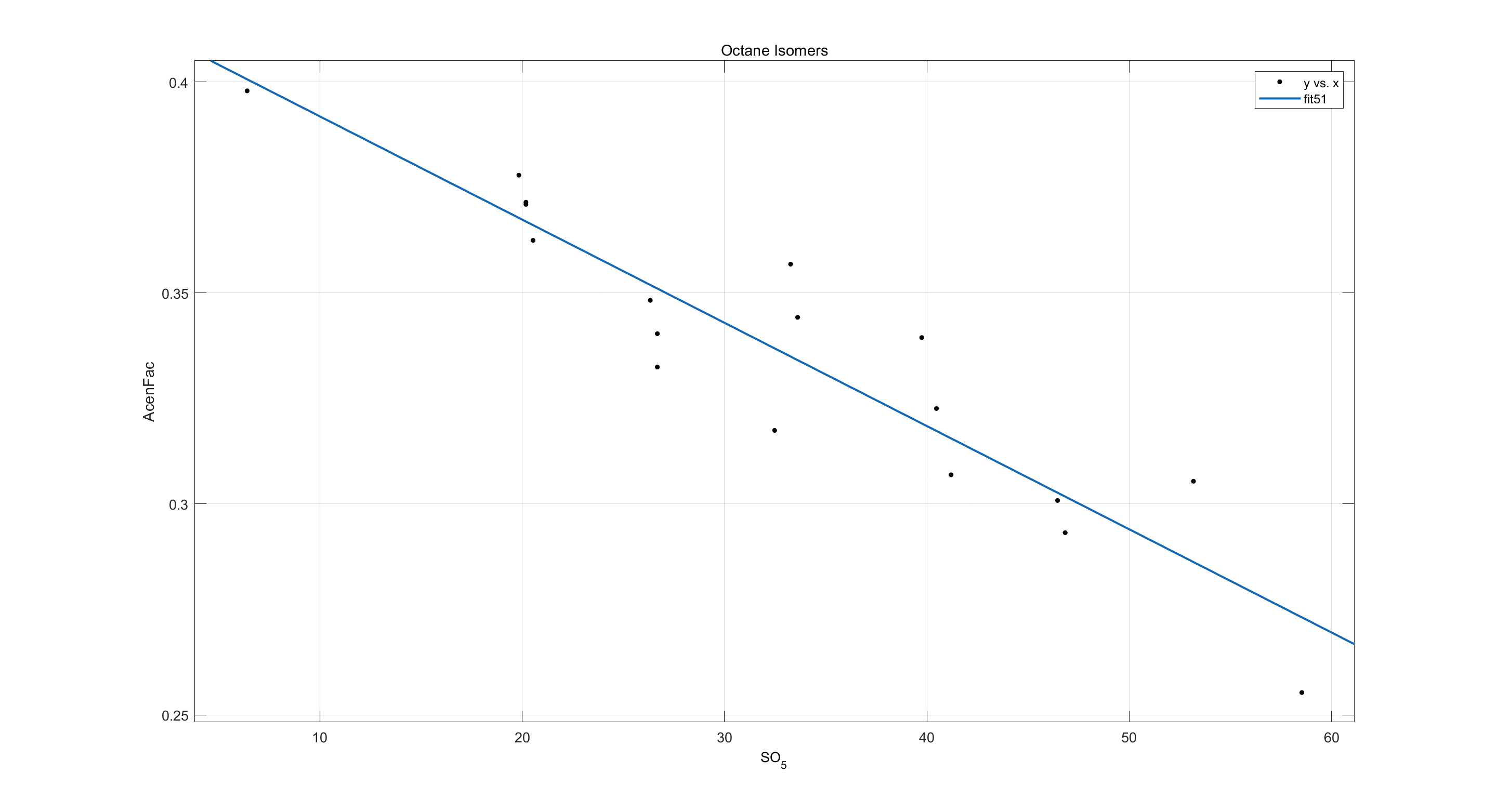}}
  \scalebox{.08}[.08]{\includegraphics{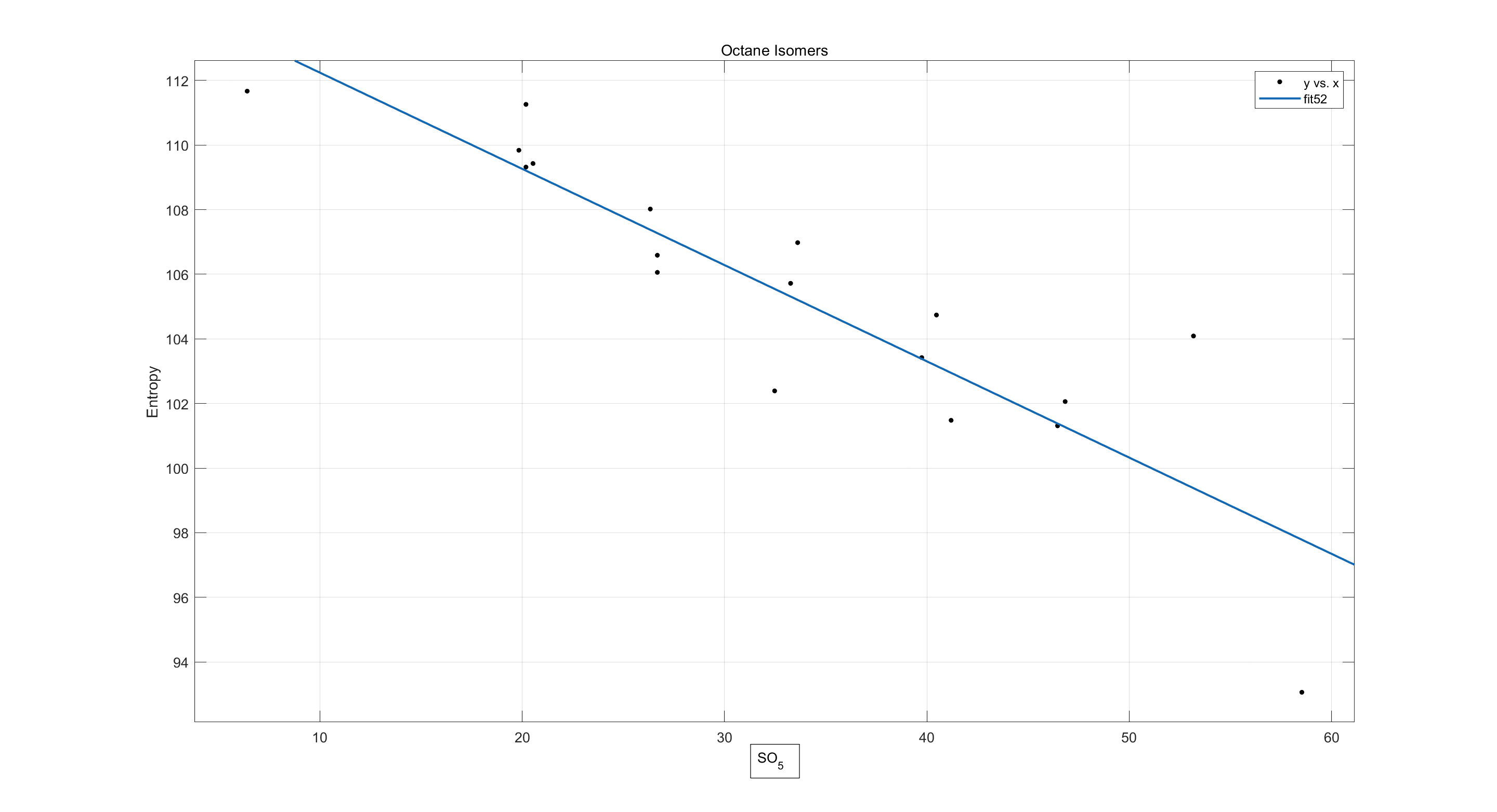}}
  \scalebox{.08}[.08]{\includegraphics{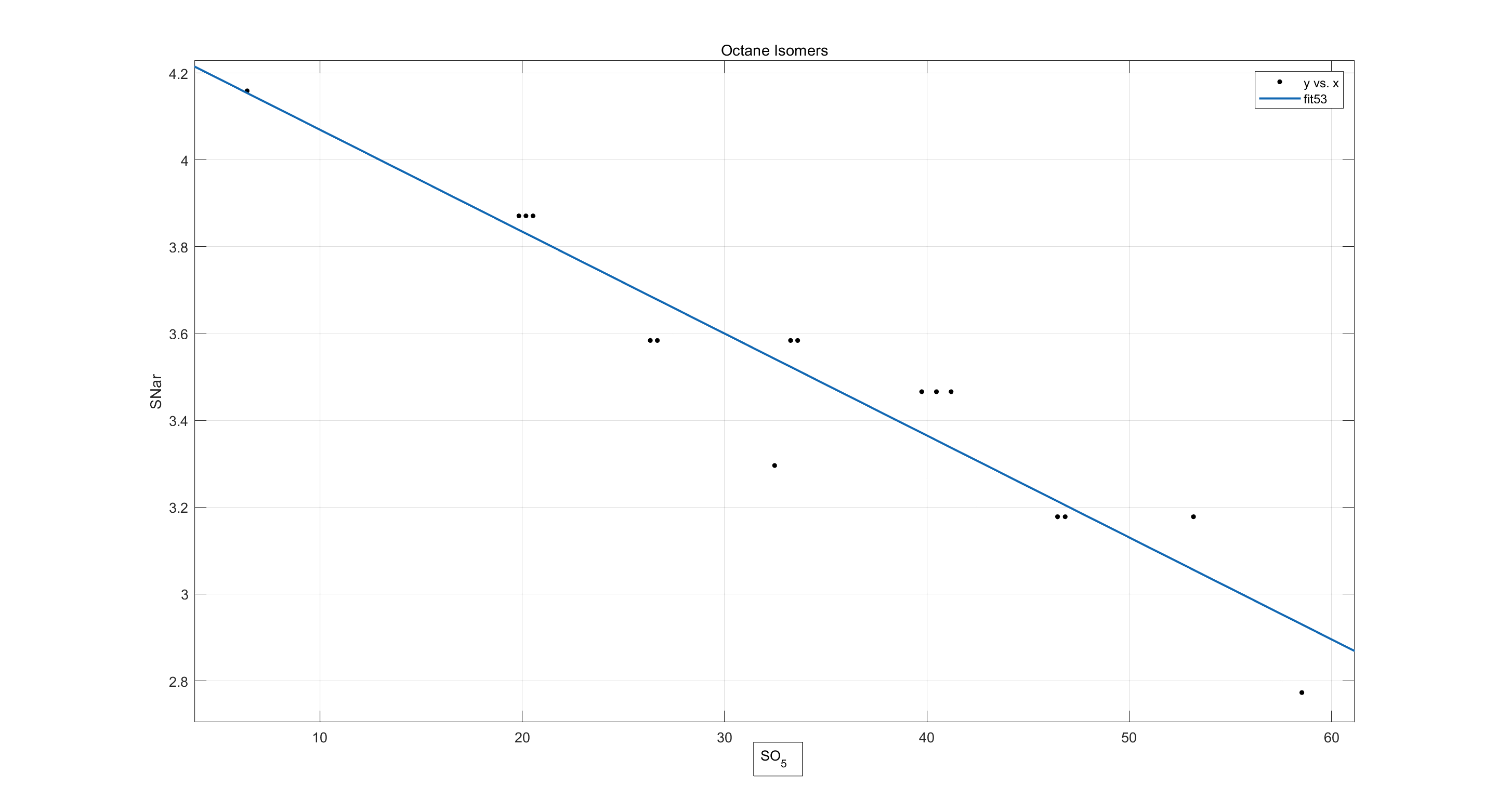}}
  \scalebox{.08}[.08]{\includegraphics{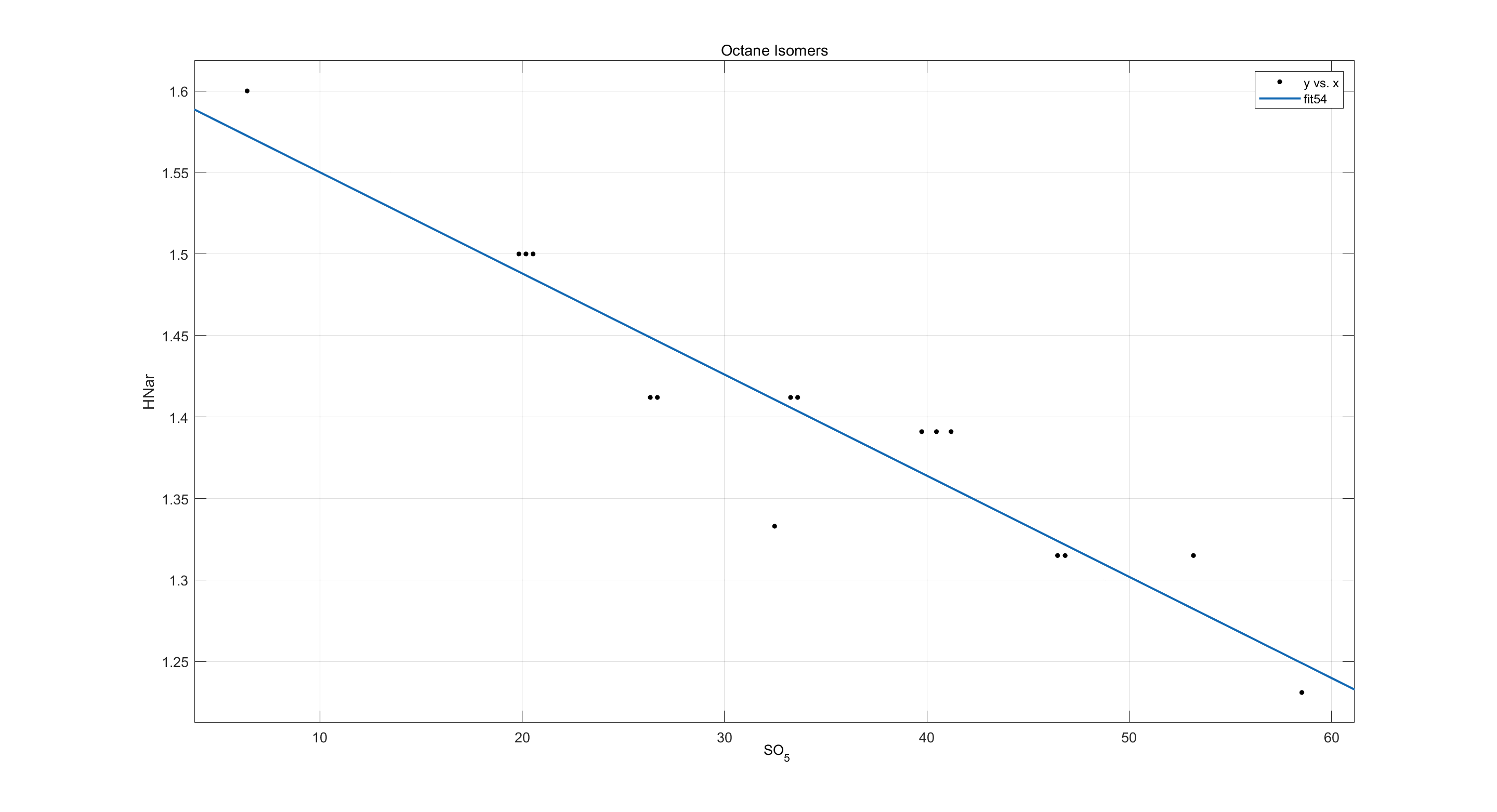}}
  \scalebox{.08}[.08]{\includegraphics{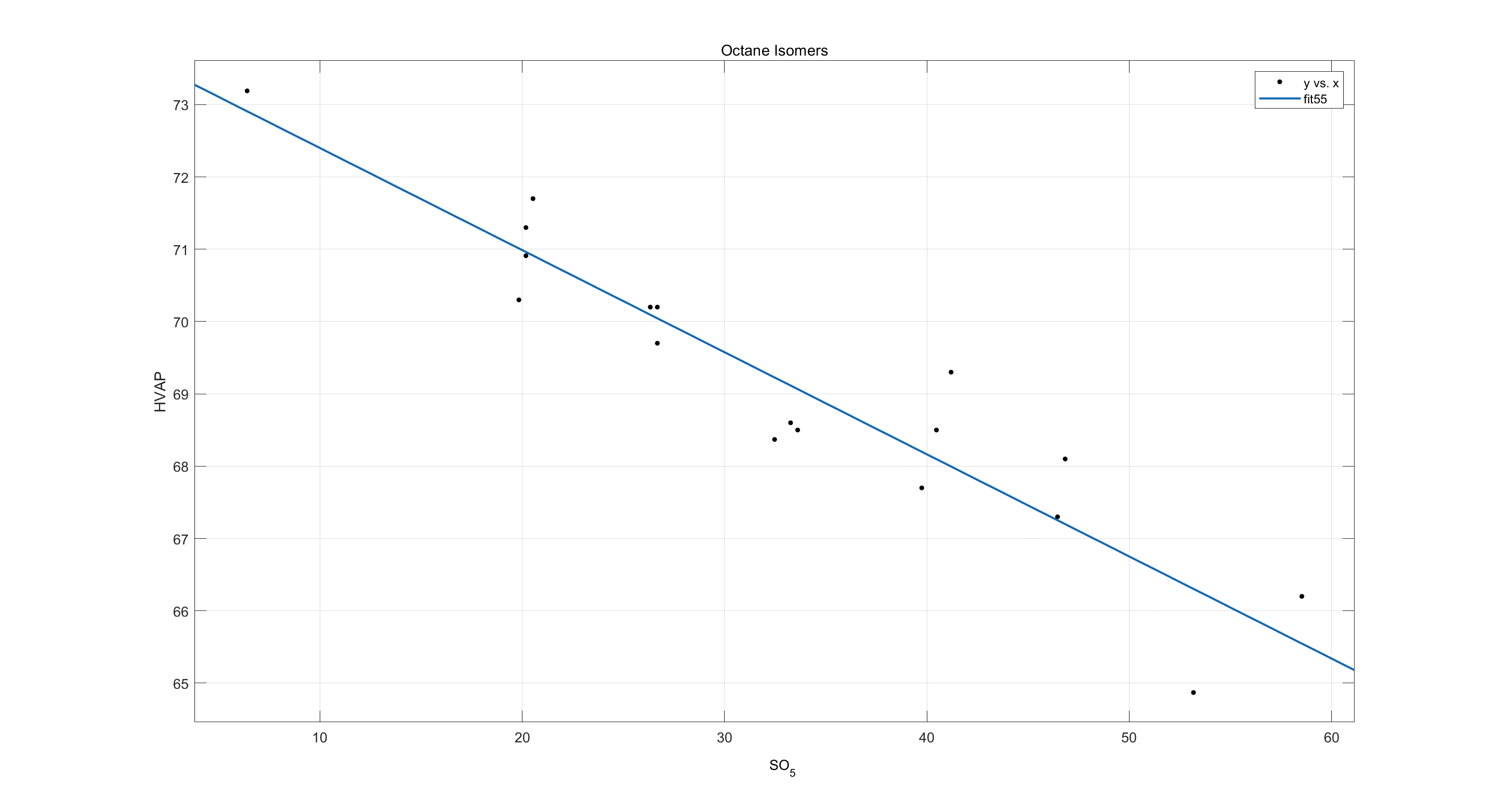}}
  \scalebox{.08}[.08]{\includegraphics{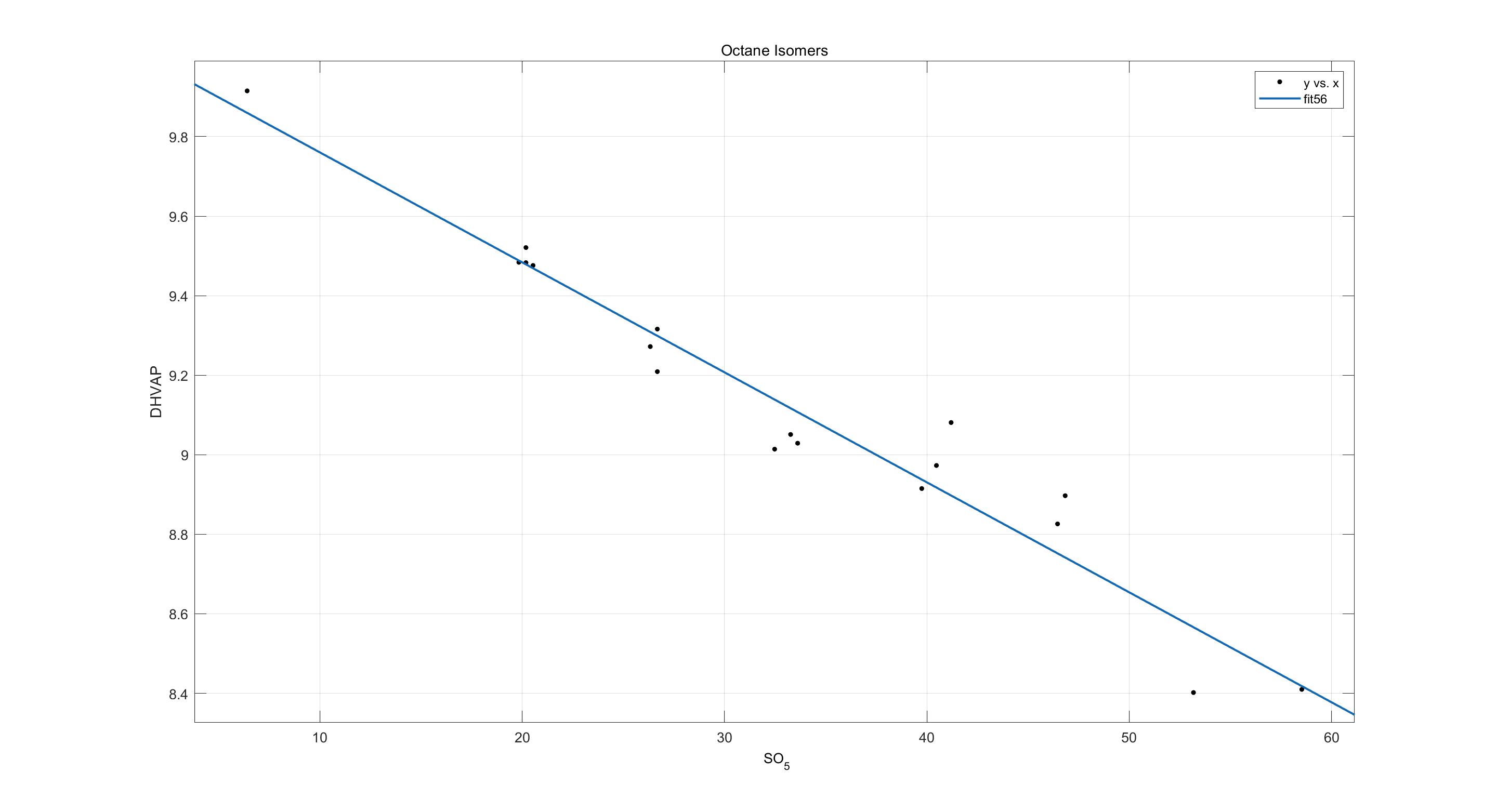}}
  \caption{Scatter plot between AcenFac (resp. Entropy, SNar, HNar, HVAP, DHVAP) of octane isomers and $SO_{5}(G)$.}
 \label{fig-78}
\end{figure}

\begin{figure}[ht!]
  \centering
  \scalebox{.08}[.08]{\includegraphics{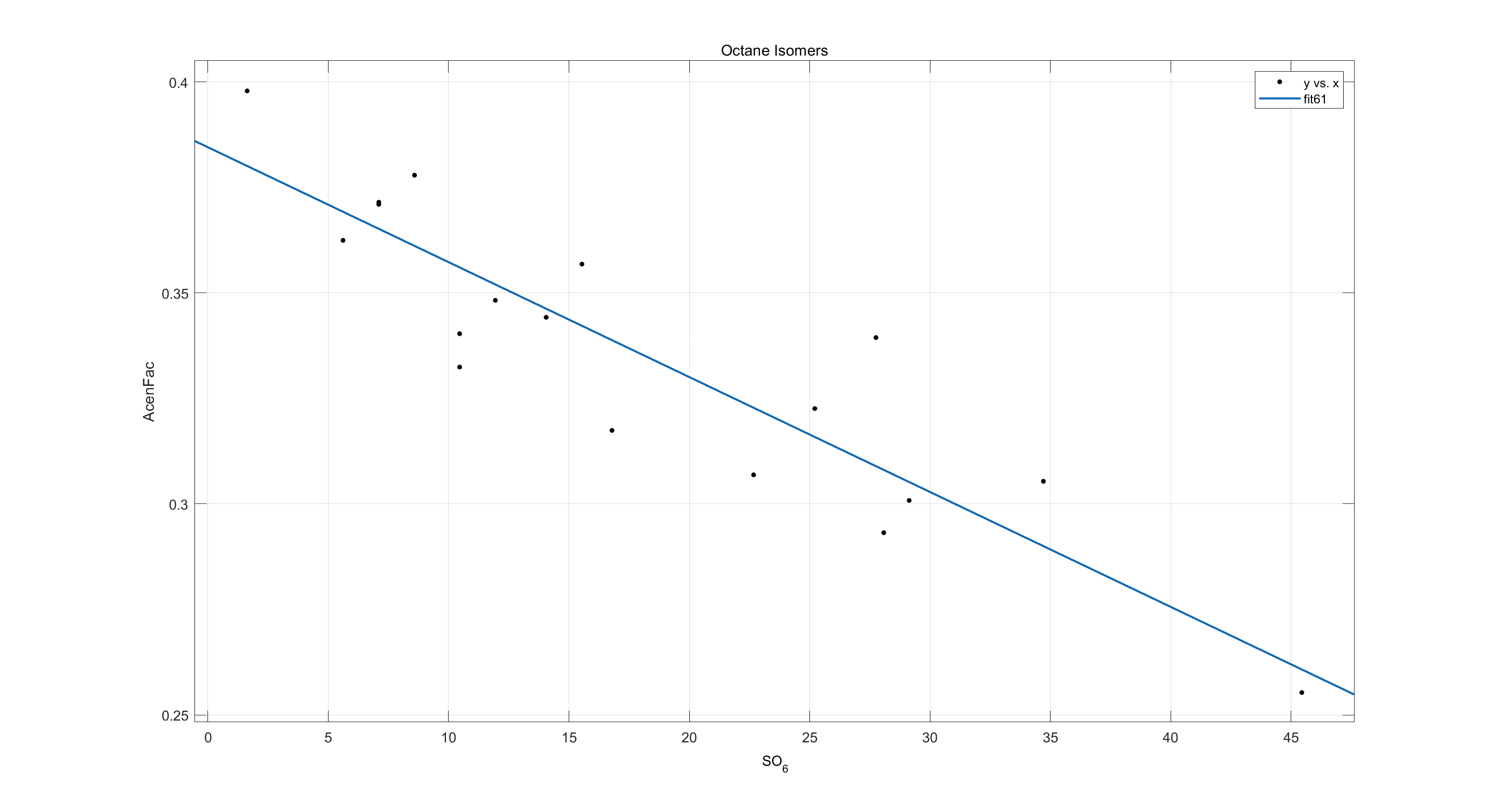}}
  \scalebox{.08}[.08]{\includegraphics{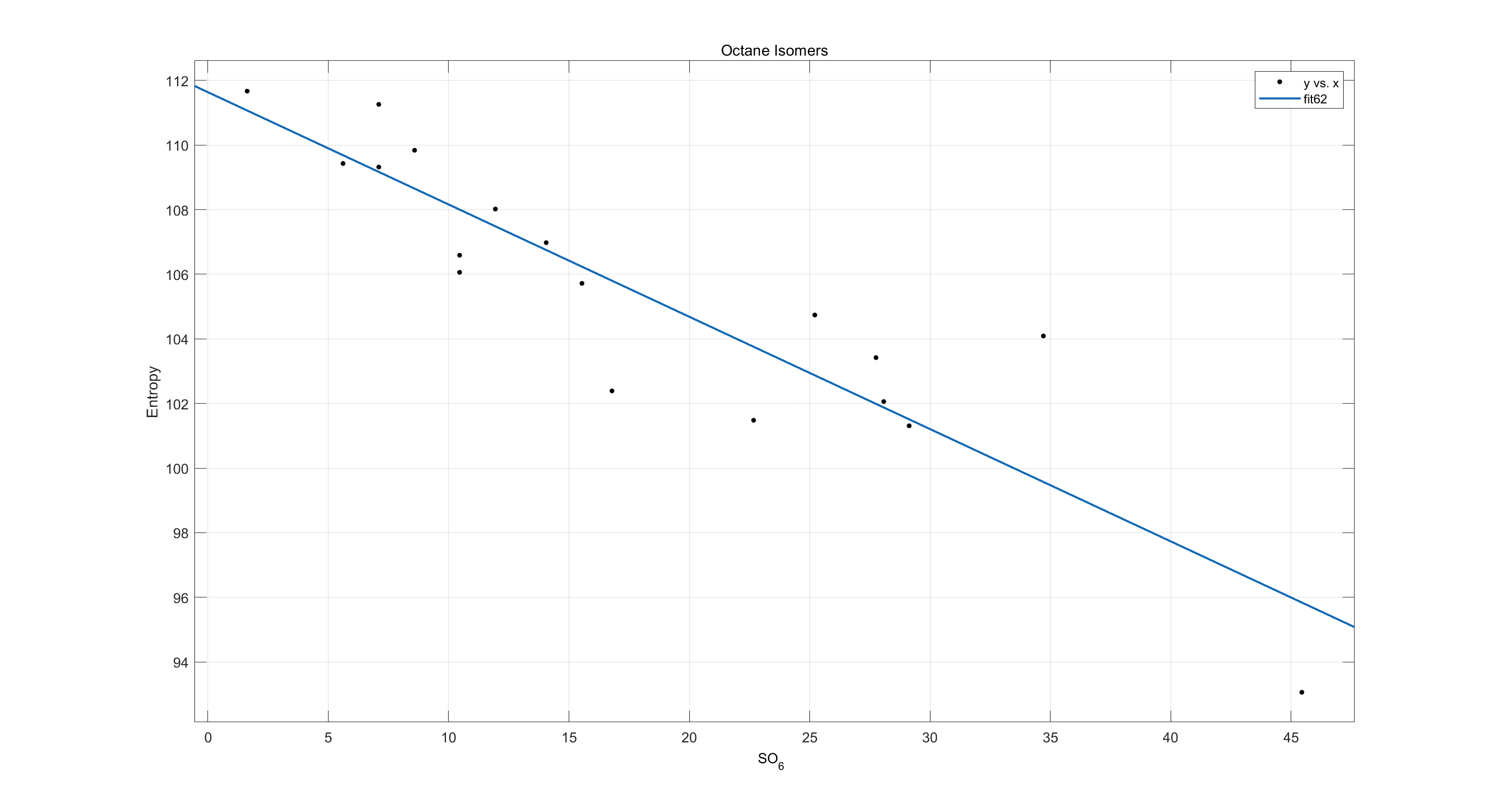}}
  \scalebox{.08}[.08]{\includegraphics{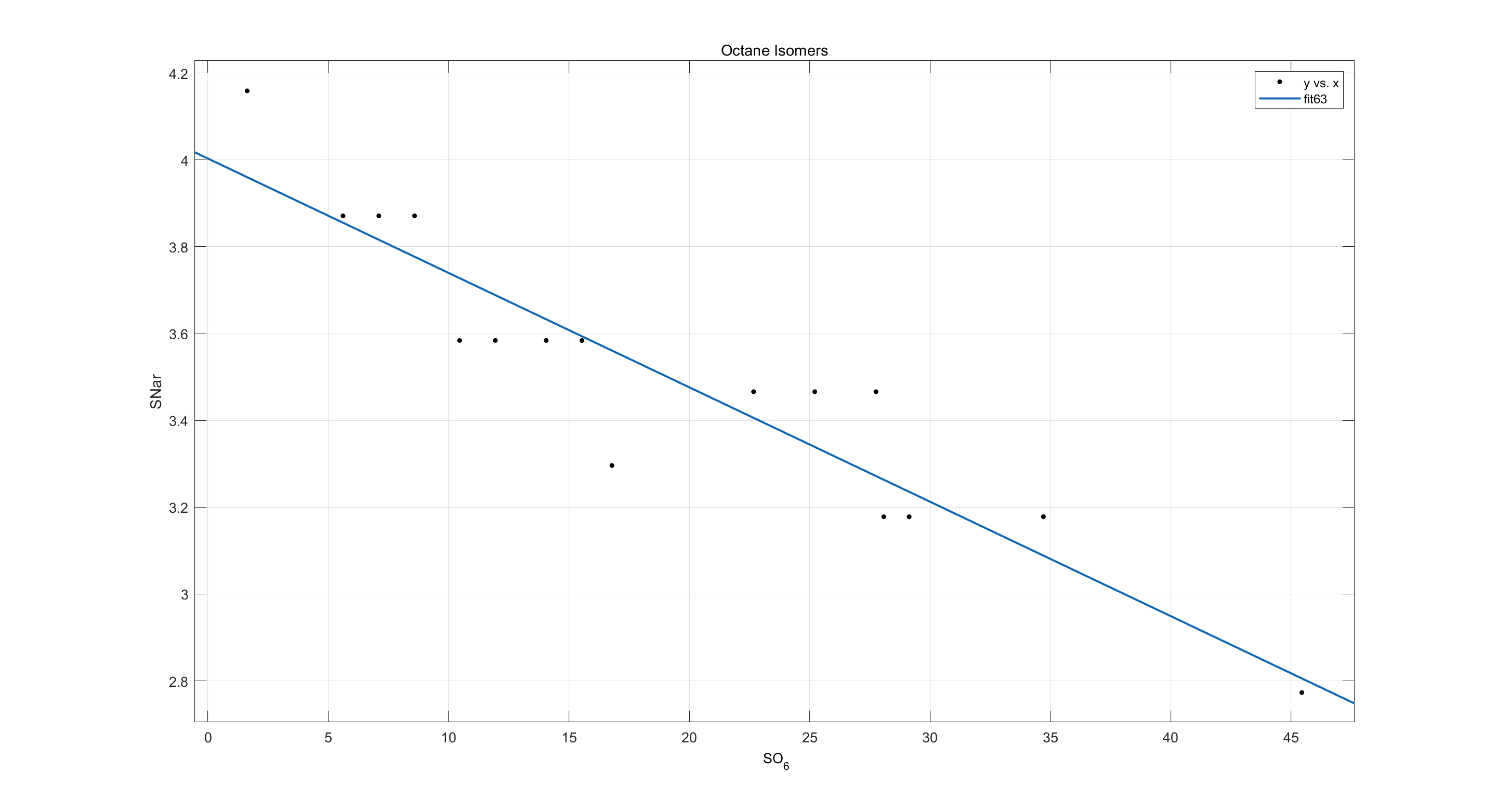}}
  \scalebox{.08}[.08]{\includegraphics{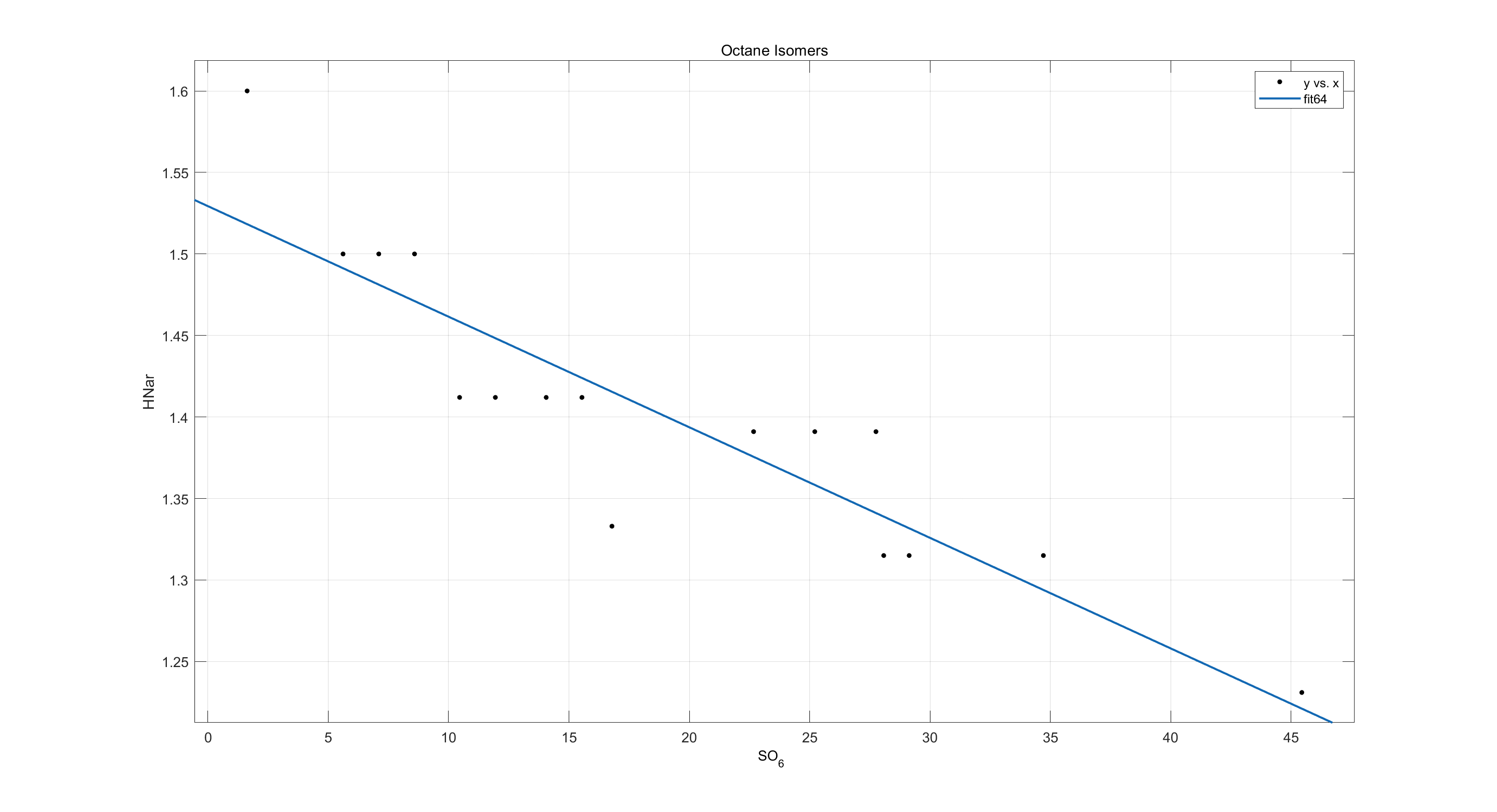}}
  \scalebox{.08}[.08]{\includegraphics{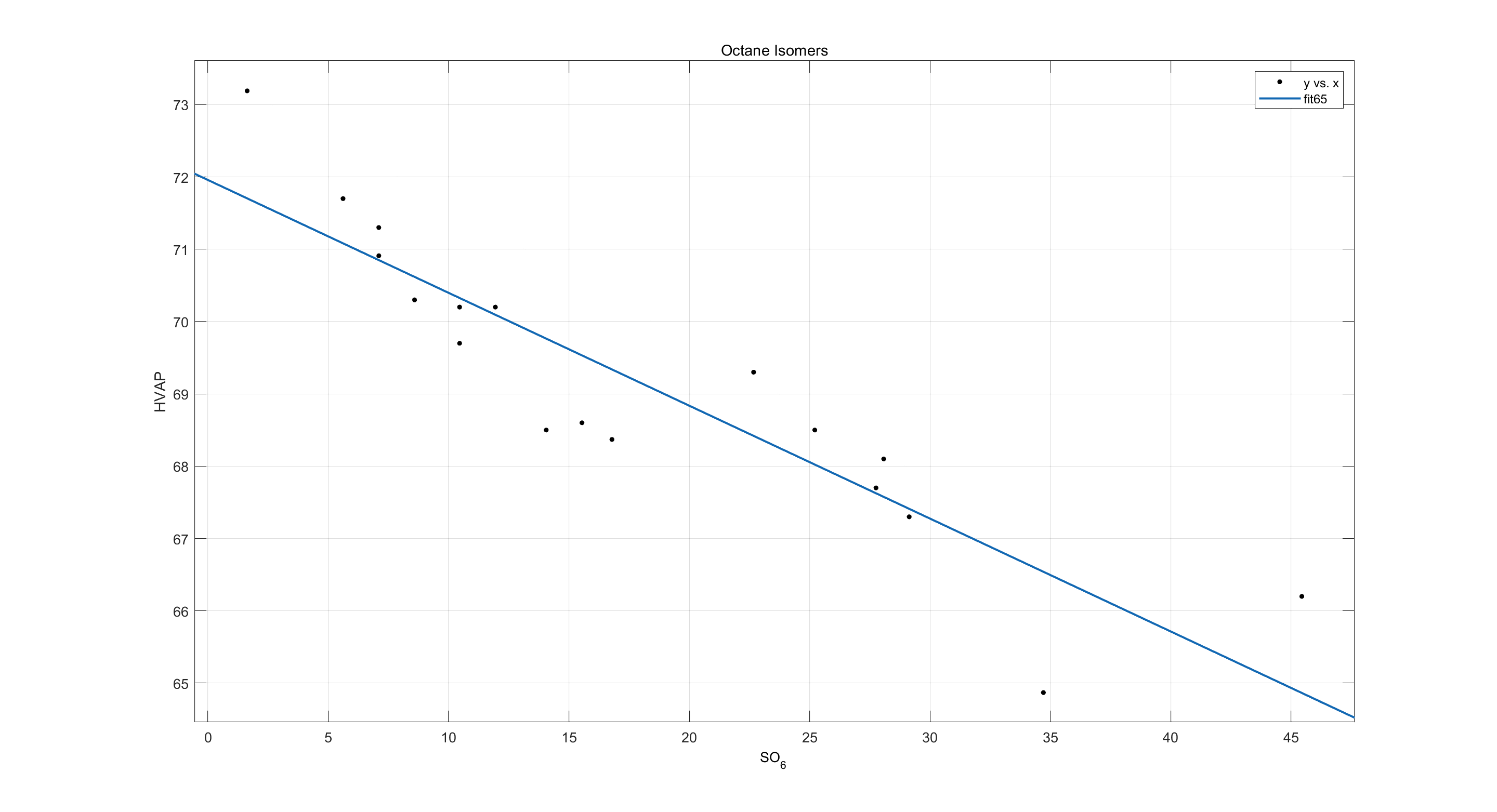}}
  \scalebox{.08}[.08]{\includegraphics{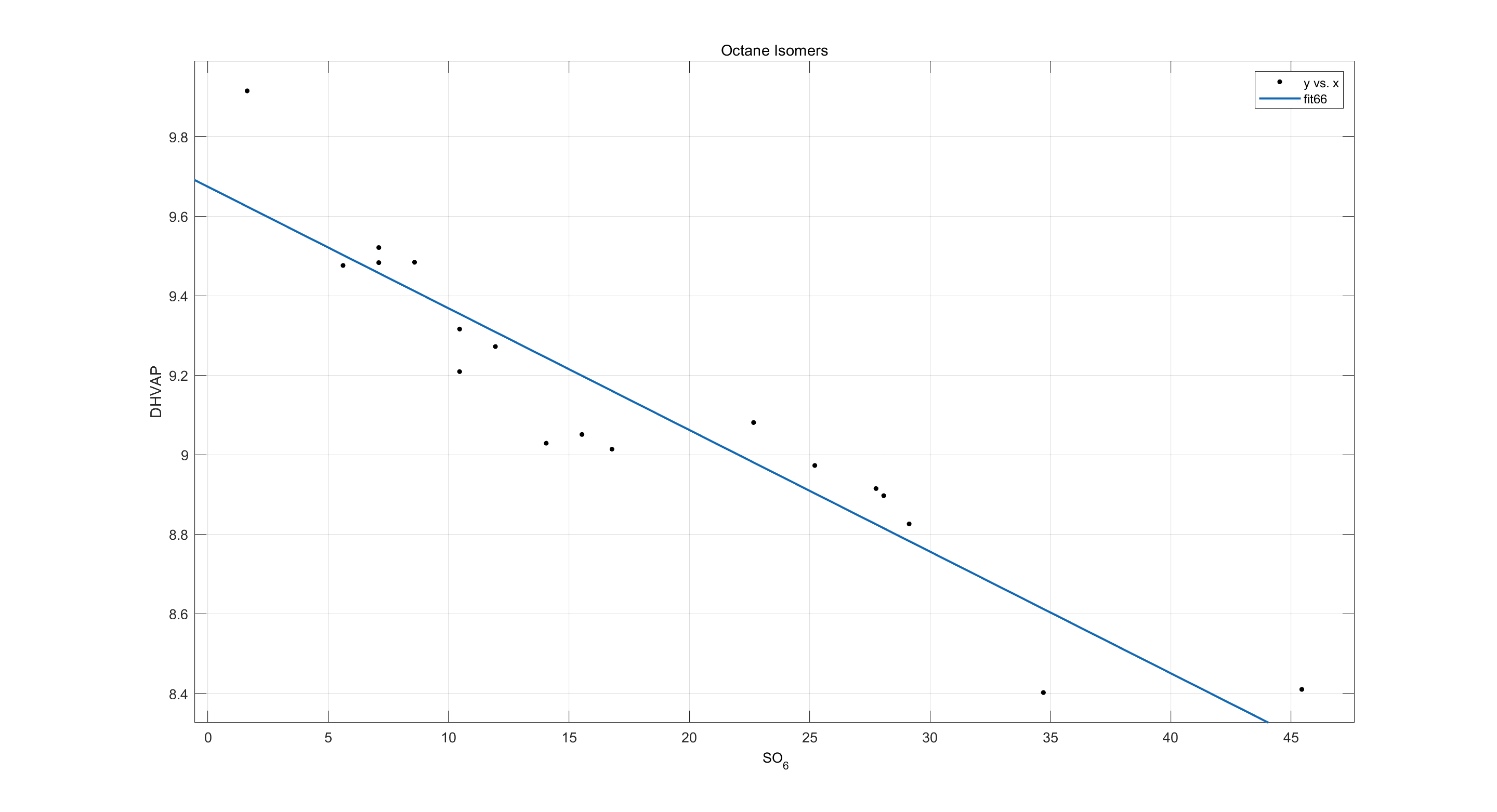}}
  \caption{Scatter plot between AcenFac (resp. Entropy, SNar, HNar, HVAP, DHVAP ) of octane isomers and $SO_{6}(G)$.}
 \label{fig-79}
\end{figure}

\begin{table}[!htb]
	\centering
    \caption{$|R|$ between Sombor-index-like invariants and Acentric Factors, Entropy, SNar, HNar of 18 octane isomers.}
     \setlength{\tabcolsep}{0.7mm}{
	\begin{tabular}{c|ccccccc}\hline
	Physico-chemical property    &	   $SO$  &      $SO_{1}$    &    $SO_{2}$    &    $SO_{3}$       &    $SO_{4}$  &           $SO_{5}$       &    $SO_{6}$          \\ \hline

	Acentric Factors             &     $0.9594$    &	  $0.9192$   &	  $0.9202$    &	  $0.9482$  &    $0.9466$  &      $0.9303$    &    $0.9029$       \\ \hline

    Entropy                      &     $0.9465$    &	  $0.8898$   &	  $0.8433$   &	  $0.9399$  &    $0.9422$  &      $0.8890$    &    $0.9043$      \\ \hline

    SNar                         &     $0.9842$    &	  $0.9311$   &	  $0.9355$   &	  $0.9791$  &    $0.9634$  &      $0.9501$    &    $0.9295$       \\ \hline

    HNar                         &     $0.9618$    &	  $0.9098$   &	  $0.9512$   &	  $0.9551$  &    $0.9318$  &      $0.9402$    &    $0.8959$       \\ \hline

    HVAP                        &     $0.9031$    &	  $0.9174$   &	  $0.9198$   &	  $0.9112$  &    $0.8837$  &      $0.9393$    &    $0.9053$       \\ \hline

    DHVAP                         &     $0.9469$    &	  $0.9522$   &	  $0.9560$   &	  $0.9509$  &    $0.93$  &      $0.9726$    &    $0.9379$       \\ \hline
	\end{tabular}}
	
	\label{table3}
\end{table}

It reveals the supremacy of Sombor index compared to other Sombor-index-like invariants in Table \ref{table3} in modeling AcenFac, SNar, HNar, HVAP and DHVAP. Sombor index always shows better predictive capability than other Sombor-index-like invariants for AcenFac, SNar and HNar.

\section{Conclusions}
\hskip 0.6cm
In this paper, we consider the mathematical and chemistry properties of these geometry-based invariants. We determined the maximum trees (resp. unicyclic graphs) with given diameter, the maximum trees with given matching number, the maximum trees with given pendent vertices, the maximum trees (resp. minimum trees) with given branching number, the minimum trees with given maximum degree and second maximum degree, the minimum unicyclic graphs with given maximum degree and girth, the minimum connected graphs with given maximum degree and pendent vertices, and some properties of maximum connected graphs with given pendent vertices with respect to the first Sombor index $SO_{1}$.

As an application, we inaugurate these geometry-based invariants and verify their chemical applicability. We used these geometry-based invariants to model the acentric factor (resp. entropy, enthalpy of vaporization, etc.) of alkanes, and obtained satisfactory predictive potential, which indicates that these geometry-based invariants can be successfully used to model the thermodynamic properties of compounds.

It is also interesting to consider the extremal geometry-based invariants of graphs with given other parameters. We will consider it in the near futher.

\baselineskip=0.20in

\end{document}